\setlist[1]{itemsep=2pt}
\DeclarePairedDelimiter\ceil{\lceil}{\rceil}
\DeclarePairedDelimiter\floor{\lfloor}{\rfloor}
\newcommand{\norm}[1]{\left\lVert#1\right\rVert}
\newcommand{\R}{\mathbb{R}}
\newcommand{\C}{\mathbb{C}}
\newcommand{\N}{\mathbb{N}}
\newcommand{\alg}{\mathfrak{g}}	
\newcommand{\gr}{\mathbb{G}}		
\newcommand{\End}{\mathcal{E}}	
\newcommand{\V}{V}				
\newtheorem{theorem}{Theorem}
\newtheorem{proposition}[theorem]{Proposition}
\newtheorem{lemma}[theorem]{Lemma}
\newtheorem{theoremintro}{Theorem}						
\crefname{theoremintro}{Theorem}{Theorems}				
\newtheorem{propositionintro}[theoremintro]{Proposition}			
\crefname{propositionintro}{Proposition}{Propositions}			
\newtheorem{corollaryintro}[theoremintro]{Corollary}			
\crefname{corollaryintro}{Corollary}{Corollaries}			
\theoremstyle{definition}
\newtheorem{definition}[theorem]{Definition}
\theoremstyle{remark} 
\newtheorem{remark}[theorem]{Remark}
\newtheorem{example}[theorem]{Example}
\crefname{construction}{Construction}{Constructions}				
\DeclareMathOperator{\op}{op}
\DeclareMathOperator{\graf}{graph}
\DeclareMathOperator{\crit}{Crit}
\DeclareMathOperator{\sym}{Sym}
\DeclareMathOperator{\im}{im}
\title[Sard properties for polynomial maps in infinite dimension]{Sard properties for polynomial maps in infinite dimension}
\author{Antonio Lerario}
\email{\href{mailto:lerario@sissa.it}{lerario@sissa.it}}
\author{Luca Rizzi}
\email{\href{mailto:lrizzi@sissa.it}{lrizzi@sissa.it}}
\author{Daniele Tiberio}
\email{\href{mailto:lrizzi@sissa.it}{dtiberio@sissa.it}}
\address{SISSA, via Bonomea 265, 34136 Trieste, Italy}
\date{\today}
\begin{document}

		\begin{abstract}Sard's theorem asserts that the set of critical values of a smooth map from one Euclidean space to another one has measure zero. A version of this result for infinite-dimensional Banach manifolds was proven by Smale for maps with Fredholm differential. It is well--known, however, that when the domain is infinite dimensional and the range is finite dimensional, the result is not true --  
even under the assumption that the map is ``polynomial'' --  and a general theory is still lacking. Addressing this issue,  in this paper, we provide sharp quantitative criteria for the validity of Sard's theorem in this setting. Our motivation comes from sub--Riemannian geometry and, as an application of our results, we prove the sub--Riemannian Sard conjecture for the restriction of the Endpoint map of Carnot groups to the set of piece--wise real--analytic controls with large enough radius of convergence, and the strong Sard conjecture for the restriction to the set of piece--wise entire controls.
\end{abstract}

\maketitle

\tableofcontents

	\section{Introduction}
	\subsection{The Sard property for polynomial maps on infinite-dimensional spaces}
	The classical Sard theorem \cite{Morse, Sard} states that the set of critical values of a smooth map $f:N\to M$, where $N$ and $M$ are finite dimensional manifolds, has measure zero in $M$. Smale \cite{smale} proved a version of this result in the case both $M$ and $N$ are infinite  dimensional Banach manifolds and the map $f$ is Fredholm: in this case the conclusion is that the set of regular values of $f$ is residual in $N$.
	
	When $N$ is infinite-dimensional and $M$ is finite dimensional, $f$ cannot be Fredholm and Smale's result cannot be applied. In fact there are smooth
surjective maps without regular points from any infinite-dimensional Banach space to $\R^2$, see \cite{Bates}. Even under the hypothesis that $f$ is a \emph{polynomial map}, the set of its critical values can be of positive measure: Kupka \cite{Kupka} constructed an example of a smooth map $f:\ell^2\to \R$, whose restriction to each finite dimensional space is a polynomial of degree $3$, and with the property that the set of its critical values is the segment $[0,1]$. On the other hand, using a quantitative version of the classical Sard theorem \cite{Yomdinnearly}, Yomdin  has shown that if a map $f:\ell^2\to \R$ can be approximated well-enough with finite dimensional polynomials, still the set of its critical values has measure zero, see \cite{YomdinApproxCompl}. (Kupka's map does not have this property.) In Yomdin's proof, the fact that the codomain is one-dimensional is essential and new technical difficulties arise for maps with values in $\R^m$, $m>1$. 
	
	In this paper we investigate the Sard property for polynomial maps defined on Hilbert spaces and with values in $\R^m$, for general $m\geq1$.   

\begin{definition}\label{def:polyH}
Given a Hilbert space $H$ and $d,m \in \N$, we define  the class of maps 
	\begin{equation}
	\mathscr{P}_{d}^m(H):=\bigg\{f: H\to \R^m\,\bigg|\, \dim(E)<\infty \implies f|_{E} \textrm{ is a polynomial map of degree $d$}\bigg\},
	\end{equation}
	where by polynomial map we mean that each component of $f|_E$ is a polynomial of degree $d$. We require that each element of $\mathscr{P}_{d}^m(H)$ is of class $\mathcal{C}^1$ (in the Fréchet sense), and that its differential $Df: H \to \mathcal{L}(H,\R^m)$ is weakly continuous and locally Lipschitz.
\end{definition}
The map constructed by Kupka belongs to this family, as well as the Endpoint maps for horizontal path spaces on Carnot groups (see \cref{sec:introsubriemannian}).
	
		We need to introduce a notion of \emph{quantitative compactness} for subsets of Hilbert spaces.
		More precisely, given $K\subset H$ and $n\in \mathbb{N}$, we denote by $\Omega_n(K, H)$ its \emph{Kolmogorov $n$--width}:
	\begin{equation}\label{eq:defnwidthintro}	
		\Omega_n(K, H):=\inf_{\dim(E)=n} \sup_{u\in K}\inf_{v\in E}\|v-u\|,
	\end{equation}
	where $\|\cdot\|$ denotes the norm of the Hilbert space $H$, and the infimum ranges over all vector subspaces $E\subset H$ of dimension $n$ (see \cref{def:nwidth}).
	A set $K\subset H$ is compact if and only if it is bounded and its $n$--width goes to zero as $n$ goes to infinity.

Our results are stated in terms of \emph{entropy dimension} of the set of \emph{$\nu$-critical values} of a map $f:H\to\R^m$, namely $f(\crit_\nu(f))$, where $\crit_\nu(f)$ is the set of points where $Df$ has rank at most $\nu \leq m-1$. The classical set of critical values corresponds to setting $\nu = m-1$, and it is denoted by $f(\crit(f))$, where $\crit(f)$ is the set of critical points. We refer to \cref{def:entropy} of entropy dimension (denoted by $\dim_e$, and also known as Minkowski dimension), noting that it is larger than the Hausdorff one so that in the forthcoming estimates one can replace the former with the latter for simplicity.

Our first theorem is a sufficient condition for the validity of the Sard property (see \cref{CorollaryOfGeneralSard}). We denote by $\mu$ the Lebesgue measure on $\R^m$.

\begin{theoremintro}[Sard under $n$--width assumptions]\label{CorollaryOfGeneralSard-intro}
Let $H$ be a Hilbert space and $K \subset H$ be a compact set such that, for some $q>1$, it holds
\begin{equation}
\limsup_{n\to \infty}\Omega_n(K,H)^{1/n} \leq q^{-1}.
\end{equation}
Let $d,m \in \N$. There exists $\beta_0 = \beta_0(d,m)>1$ such that for all $f\in \mathscr{P}_{d}^m(H)$ and $\nu \leq m-1$ we have	
\begin{equation}
		\dim_e \bigg(f\big( \crit_{\nu} (f) \cap K \big)\bigg)
		\leq 
		\nu + \frac{\ln \beta_0}{ \ln q}.
	\end{equation}
	In particular, if $q> \beta_0$, then the Sard property holds on $K$:
	\begin{equation}
	\mu\bigg(f\big(\crit (f) \cap K\big)\bigg)=0.
	\end{equation} 
\end{theoremintro}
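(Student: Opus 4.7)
The plan is to reduce the infinite-dimensional statement to a quantitative Sard estimate for polynomial maps on finite-dimensional spaces, using the Kolmogorov $n$-widths to control the approximation error. Fix $\varepsilon > 0$ small and choose an integer $n = n(\varepsilon)$ to be optimized later. By definition of $\Omega_n(K,H)$ and the hypothesis, I pick an $n$-dimensional subspace $E_n \subset H$ for which every $u \in K$ lies within distance $\leq C q^{-n}$ from $E_n$; let $\pi_n$ be the orthogonal projection onto $E_n$, so that $\|u - \pi_n u\| \leq C q^{-n}$ uniformly on $K$, for some $C$ depending only on the decay rate.

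Next, I exploit the regularity built into $\mathscr{P}_d^m(H)$ to compare $f$ with its composition $f \circ \pi_n$ on $K$. Since $Df$ is locally Lipschitz (so $f$ is too on the bounded set $K$), there exists $L = L(f,K)$ with
\begin{equation*}
\|f(u) - f(\pi_n u)\| \leq L q^{-n}, \qquad \|Df(u) - Df(\pi_n u)\| \leq L q^{-n}, \qquad \forall\, u \in K.
\end{equation*}
The second estimate shows that whenever $u \in \crit_\nu(f) \cap K$, the projection $\pi_n u \in E_n$ is \emph{approximately} $\nu$-critical for $f|_{E_n}$, in the sense that the $(\nu+1)$-th singular value of $D(f|_{E_n})(\pi_n u)$ is at most $L q^{-n}$.

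The heart of the argument is a \emph{finite-dimensional} quantitative Sard theorem, to be proved separately: for any polynomial map $P: E_n \to \R^m$ of degree $d$, and any bounded set $B \subset E_n$, the image of the set of $\eta$-approximate $\nu$-critical points of $P$ in $B$ can be covered by
\begin{equation*}
N(\varepsilon) \leq C_1(d,m)\, \beta_0(d,m)^{n}\, \varepsilon^{-\nu}
\end{equation*}
Euclidean balls of radius $\varepsilon$ in $\R^m$, provided $\eta \lesssim \varepsilon$, with $\beta_0$ depending only on $d$ and $m$. Extracting such a clean exponential dependence on the ambient dimension $n$, with a base $\beta_0$ that does not blow up with $n$, from Bezout-type bounds on the algebraic complexity of the critical variety of $P$ (and its $\eta$-approximate version) is the principal technical obstacle and the source of the constant $\beta_0(d,m)$ in the theorem. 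Applying this estimate to $P = f|_{E_n}$ on $B = \pi_n(K)$ at scale $\varepsilon$, with $\eta = L q^{-n}$, and enlarging each ball by $L q^{-n}$ to absorb the projection error from the previous step, I obtain a covering of $f(\crit_\nu(f) \cap K)$ by $N(\varepsilon)$ balls of radius $2\varepsilon$ in $\R^m$.

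Finally I optimize $n$ as a function of $\varepsilon$ by setting $n = \lceil \ln(1/\varepsilon)/\ln q \rceil$, which balances the projection error $L q^{-n}$ against $\varepsilon$ (and ensures $\eta \lesssim \varepsilon$). Substituting,
\begin{equation*}
N(\varepsilon) \lesssim \beta_0(d,m)^{n}\, \varepsilon^{-\nu} \lesssim \varepsilon^{-(\nu + \ln \beta_0/\ln q)},
\end{equation*}
and the definition of entropy dimension yields the bound $\dim_e\bigl(f(\crit_\nu(f) \cap K)\bigr) \leq \nu + \ln \beta_0/\ln q$. Specializing to $\nu = m-1$, if $q > \beta_0$ then the right-hand side is strictly less than $m$, forcing $f(\crit(f) \cap K)$ to have Lebesgue measure zero and proving the Sard property.
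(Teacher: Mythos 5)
Your overall architecture coincides with the paper's: project $K$ onto near-optimal $n$-dimensional subspaces furnished by the $n$-width hypothesis, transfer $\nu$-criticality of $f$ to approximate $\nu$-criticality of the polynomial restriction $f|_{E_n}$ via Weyl's inequality for singular values, invoke a finite-dimensional covering bound for the approximate critical values with single-exponential dependence $\beta_0^n$ on the ambient dimension, and balance $n\sim \ln(1/\varepsilon)/\ln q$ against the scale $\varepsilon$. These reduction steps are sound, up to two small points: since the hypothesis is only $\limsup_n\Omega_n(K,H)^{1/n}\leq q^{-1}$, you do not actually get $\Omega_n\leq Cq^{-n}$ for a fixed $C$ (consider $\Omega_n=nq^{-n}$); you must work with an arbitrary $q_\varepsilon<q$ and let $q_\varepsilon\uparrow q$ at the end, as the paper does. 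Likewise the passage from ``$Df$ locally Lipschitz'' to a uniform Lipschitz bound valid at the projected points $\pi_n u$ requires the small covering argument of \cref{LocaleLipschitzCompattezza}.

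The genuine gap is the step you explicitly defer: the covering estimate $N(\varepsilon)\leq C_1(d,m)\,\beta_0(d,m)^n\,\varepsilon^{-\nu}$ for the $\eta$-approximate $\nu$-critical values of a degree-$d$ polynomial map on a ball in $\R^n$, with a base $\beta_0$ \emph{independent of $n$}. This is not an off-the-shelf result, and it is precisely where all of the paper's work lies. A direct appeal to the Yomdin--Comte estimates \cite{ComteYomdin} gives constants whose dependence on $n$ is implicit, and making it explicit via naive quantifier elimination produces doubly exponential growth in $n$, which destroys the final exponent. The paper obtains the single-exponential bound by (i) controlling the diagram of the semialgebraic set $C^{\Lambda}(p)$ uniformly in $n$ and $\Lambda$ (\cref{DiagramLambdaCrit}); (ii) bounding the Vitushkin variations of $p(C^\Lambda(p)\cap B_{\R^n}(r))$ by $\mathrm{cst}(m,r)\,n^m\beta_0^n\Lambda_0\cdots\Lambda_i$ (\cref{BehaviourVariations1,thm:estimatevariations}), which in turn hinges on the quantitative approximate definable choice theorem (\cref{thm:SemialgebraicSelection}) replacing the non-quantitative selection used in \cite[Thm.\ 7.2]{ComteYomdin}; and (iii) converting variations into $\varepsilon$-entropy via \cite[Thm.\ 3.5]{ComteYomdin}. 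Your formulation of the needed lemma is essentially the correct one (up to a harmless extra factor $n^m$, and up to tracking the bounded top singular values $\Sigma_1,\dots,\Sigma_\nu$, which is what yields $\varepsilon^{-\nu}$ rather than $\varepsilon^{-(m-1)}$), and you rightly flag it as the crux; but as written the proposal asserts it rather than proves it, so the argument is incomplete exactly at the point where the theorem's content resides.
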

\begin{remark}\label{rmk:beta0}
The constant $\beta_0(d,m)$ of \cref{CorollaryOfGeneralSard-intro} comes from semialgebraic geometry, and its origin is discussed in \cref{sec:introsemialg}. We sometimes call it the \emph{semialgebraic constant}.
\end{remark}

By building upon Kupka's counterexamples \cite{Kupka}, we prove that the quantitative assumption $q>\beta_0$ in \cref{CorollaryOfGeneralSard-intro} cannot be dispensed of. The following result corresponds to \cref{thm:sintesiexamples}. We denote by $B_X(r)$ the closed ball of radius $r$ in a Banach space $X$.

\begin{theoremintro}[Counterexamples to Sard]\label{thm:sintesiexamples-intro}
Let $d,m \in \N$, with $d\geq 3$, and let $1<q<(d-1)^{1/d}$. There exist a Hilbert space $H$, and $f\in \mathscr{P}_d^m(H)$ such that $K=\crit(f)\cap B_H(r)$ is compact for all $r>0$, with
\begin{equation}
\limsup_{n\to \infty} \Omega_n(K,H)^{1/n} \leq q^{-1},
\end{equation}
and $f : H \to \R^m$ does not verify the Sard property, namely $\mu(f(\crit(f)\cap K)) > 0$. Therefore, the semialgebraic constant $\beta_0(d,m)$ of \cref{CorollaryOfGeneralSard-intro} satisfies
\begin{equation}
\beta_0(d,m) \geq (d-1)^{1/d}, \qquad \forall\, m\in\N,\, d\geq 3.
\end{equation}
\end{theoremintro}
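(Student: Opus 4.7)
The plan is to construct an explicit counterexample generalizing Kupka's cubic one to arbitrary degree $d \geq 3$, while carefully tracking the Kolmogorov widths of the critical set. First, I would reduce to the scalar case $m = 1$. Given a counterexample $f_0 \in \mathscr{P}_d^1(\ell^2)$ with the stated properties for $q$, the map $F: \ell^2 \oplus \R^{m-1} \to \R^m$ defined by $F(x, y_1, \dots, y_{m-1}) = (f_0(x), y_1, \dots, y_{m-1})$ lies in $\mathscr{P}_d^m$ (each component is polynomial of degree at most $d$ on finite-dimensional subspaces, and weak continuity and local Lipschitz of $DF$ reduce to those of $Df_0$). Its critical set is $\crit(f_0) \times \R^{m-1}$, whose intersection with $B_H(r)$ has image of positive $m$-dimensional measure as soon as $f_0(\crit(f_0) \cap B_{\ell^2}(r))$ has positive one-dimensional measure. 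The product structure adds at most $m - 1$ to the approximating dimension, so
\[\Omega_n\bigl(\crit(F) \cap B_H(r),\, H\bigr) \leq \Omega_{n - m + 1}\bigl(\crit(f_0) \cap B_{\ell^2}(r),\, \ell^2\bigr),\]
which preserves the asymptotic rate $q^{-1}$.

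For $m = 1$, I would build $f_0: \ell^2 \to \R$ as a Kupka-type series
\[f_0(x) = \sum_{n \geq 1} \varepsilon_n \, Q_n(x_{n-1}, x_n),\]
where the $Q_n : \R^2 \to \R$ are polynomials of degree $d$ engineered so that the critical equation $\partial_v Q_n(u, v) = 0$ admits up to $d-1$ real roots $v$ for each $u$ (a natural model is a suitable scaling of $Q(u,v) = u v - \tfrac{1}{d} v^d$), and the $\varepsilon_n>0$ are weights decreasing geometrically. The critical system $\partial_{x_n} f_0 = 0$ then determines $x_n$ as one of several algebraic functions of its neighbours, so $\crit(f_0)$ organizes into an infinite tree with of order $(d-1)^n$ branches at depth $n$. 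Following Kupka's recursive \emph{filling} argument, the polynomials $Q_n$ and weights $\varepsilon_n$ are tuned so that the images of these branches fill in gaps successively, making $f_0(\crit(f_0) \cap B_{\ell^2}(r))$ cover a sub-interval of positive length for $r$ large.

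The threshold $(d-1)^{1/d}$ emerges from balancing two competing constraints. If the coordinates on the critical set decay like $\rho^n$, then truncation gives $\Omega_n(\crit(f_0) \cap B_{\ell^2}(r), \ell^2) \lesssim \rho^n$, hence $\limsup_n \Omega_n^{1/n} \leq \rho$. On the other hand, for the $(d-1)^n$ branches at depth $n$ to collectively cover an interval of length bounded below, each branch must contribute an oscillation of order $(d-1)^{-n}$; matching this against the natural scale $\rho^{dn}$ of a degree-$d$ polynomial evaluated on a coordinate of size $\rho^n$ forces $(d-1)\rho^d \gtrsim 1$, that is $\rho \geq (d-1)^{-1/d}$. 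Choosing $\rho$ any value in $\bigl((d-1)^{-1/d},\, 1\bigr)$ (non-empty for $d \geq 3$) yields a counterexample with $\limsup \Omega_n^{1/n} \leq q^{-1}$ for any prescribed $q < (d-1)^{1/d}$, proving the claimed lower bound on $\beta_0(d,m)$.

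The main technical obstacle is realizing both constraints \emph{simultaneously}: saturating the rate $(d-1)^{1/d}$ leaves essentially no slack either in the summability of the series defining $f_0$ or in the coverage of the image interval. Verifying that the countable union of branches covers an interval of positive measure (rather than merely a Cantor-type set of small Hausdorff dimension) is the combinatorial heart of the argument and is where the factor $d - 1$ enters the sharp threshold. A secondary difficulty is confirming the regularity requirements of $\mathscr{P}_d^1$: Fréchet $\mathcal{C}^1$-regularity, weak continuity of $Df_0$, and local Lipschitz continuity must be established via uniform estimates on bounded sets, which become delicate precisely in the regime $\rho \to 1$ where the geometric series converges most slowly.
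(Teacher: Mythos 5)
Your reduction from general $m$ to $m=1$ via $F(x,y)=(f_0(x),y)$ and the width estimate $\Omega_{n+m-1}(\crit F)\leq \Omega_n(\crit f_0)$ is exactly what the paper does, and your heuristic for where the threshold $(d-1)^{1/d}$ comes from (balancing the weight $(d-1)^{-k}$ against the scale $q^{dk}$ of a degree-$d$ term in a coordinate of size $q^{-k}$) identifies the right mechanism. But the core scalar construction you propose is both different from the paper's and critically incomplete. The paper does \emph{not} use coupled two-variable blocks $Q_n(x_{n-1},x_n)$ and does not need any recursive ``filling'' argument: it takes a single one-variable polynomial $\psi$ of degree $d$ whose set of critical values is exactly $\{0,1,\dots,d-2\}$ (a known fact about real polynomials with prescribed critical values), and sets $f_d(x)=\sum_k (d-1)^{-k}\psi(q^{k-1}x_k)$. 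Because the variables are decoupled, $\crit(f_d)$ is literally a product of the finite critical sets of $\psi$ in rescaled coordinates, and $f_d(\crit(f_d))=\{\sum_k c_k(d-1)^{-k}\mid c_k\in\{0,\dots,d-2\}\}=[0,1]$ by base-$(d-1)$ expansion — the covering of an interval is immediate, with no combinatorics.

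The gap in your version is precisely the step you flag as the ``combinatorial heart'': it is not carried out, and as set up it would be genuinely hard. With $f_0(x)=\sum_n\varepsilon_n Q_n(x_{n-1},x_n)$ the critical equations are
\begin{equation}
\partial_{x_n}f_0=\varepsilon_n\,\partial_v Q_n(x_{n-1},x_n)+\varepsilon_{n+1}\,\partial_u Q_{n+1}(x_n,x_{n+1})=0,
\end{equation}
which couple \emph{three} consecutive coordinates; the critical set is therefore not the branching tree of independent root choices you describe, and neither the count $(d-1)^n$ of branches nor the claim that their images tile an interval follows from what you wrote. You would also still need to exhibit the degree-$d$ ingredient with $d-1$ critical levels spaced so as to make the images fill gaps — which is exactly the prescribed-critical-values polynomial the paper imports to make the whole problem disappear. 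I recommend replacing the coupled construction by the decoupled one: it satisfies all the regularity requirements of $\mathscr{P}_d^1$ under the single condition $q^d<d-1$, and the width bound $\Omega_n(\crit(f_d),\ell^2)\lesssim q^{-n}$ follows by truncating coordinates, as in your sketch.
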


\begin{remark}
The case $d=2$, which is left out from \cref{thm:sintesiexamples-intro}, is special. This is related to the fact that the complexity of semialgebraic sets defined by quadratic equations has a different behaviour compared to the case $d\geq 3$. We currently expect that \emph{any} $f\in \mathscr{P}^m_2(H)$ satisfies the Sard property globally on $H$. This will be the subject of a future work.
\end{remark}

Replacing the entropy dimension with the Hausdorff one (which is smaller than the former, but countably stable) we strengthen \cref{CorollaryOfGeneralSard-intro} as follows (see \cref{thm:main2}).

\begin{theoremintro}[Sard on linear subspaces]\label{thm:main2intro}
Let $H$ be a Hilbert space and $K \subset H$ be a compact set such that, for some $q>1$, it holds
\begin{equation}
\limsup_{n\to \infty}\Omega_n(K,H)^{1/n} \leq q^{-1}.
\end{equation}
Let $d,m \in \N$, let $\beta_0=\beta_0(d,m)>1$ be the same constant of \cref{CorollaryOfGeneralSard-intro}. Consider the (possibly non-closed) linear subspace
\begin{equation}
\V:= \mathrm{span}(K).
\end{equation}
Then for all $f\in \mathscr{P}_{d}^m(H)$ and $\nu\leq m-1$  the restriction $f|_{\V}: \V \to \R^m$ satisfies
\begin{equation}
\dim_{\mathcal{H}}\bigg(f\big(\mathrm{Crit}_\nu(f|_{\V})\big)\bigg)\leq \nu + \frac{\log\beta_0}{\log q}.
\end{equation}
In particular, if $q>\beta_0$, then the restriction $f|_{\V}: \V \to \R^m$ satisfies the Sard property
\begin{equation}
	\mu\bigg(f\big(\mathrm{Crit}(f|_{\V})\big)\bigg)=0.
\end{equation}
In all previous results, $f\big(\mathrm{Crit}_{\nu}(f|_{\V})\big)$ can be replaced with the smaller set $f(\mathrm{Crit}_{\nu}(f)\cap \V)$.
\end{theoremintro}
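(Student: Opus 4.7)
The plan is to reduce to \cref{CorollaryOfGeneralSard-intro} via a countable compact exhaustion of $\V$, then invoke the countable stability of the Hausdorff dimension. I first observe that the closure $\bar{\V}\subset H$ is a closed Hilbert subspace and that $f|_{\bar\V}\in\mathscr{P}^m_d(\bar\V)$: the defining conditions of \cref{def:polyH} are inherited upon restriction. The key geometric input is that, for every $x\in\V$, the density of $\V$ in $\bar\V$ combined with the continuity of the linear map $Df(x):H\to\R^m$, whose image $Df(x)(\V)$ is a linear (hence closed) subspace of the finite-dimensional $\R^m$, yields $Df(x)(\bar\V)=Df(x)(\V)$. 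Consequently $\mathrm{rank}(Df(x)|_\V)=\mathrm{rank}(Df(x)|_{\bar\V})$, and
\begin{equation*}
\mathrm{Crit}_\nu(f|_\V)=\mathrm{Crit}_\nu(f|_{\bar\V})\cap\V.
\end{equation*}

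For the exhaustion, I set $K_j:=j\cdot\overline{\mathrm{conv}}(K\cup(-K))\subset\bar\V$. By Mazur's theorem each $K_j$ is compact, and any $x=\sum_i\alpha_iv_i\in\V$ with $v_i\in K$ belongs to $K_j$ whenever $j\in\N$ satisfies $j\geq\sum_i|\alpha_i|$, so $\V\subseteq\bigcup_{j\in\N}K_j$. The Kolmogorov width decays at the same rate: for any finite-dimensional linear $E\subset H$, the function $u\mapsto\mathrm{dist}(u,E)$ is convex, continuous, and even, which gives $\sup_{u\in K_j}\mathrm{dist}(u,E)=j\sup_{u\in K}\mathrm{dist}(u,E)$; infimizing over $E$ yields $\Omega_n(K_j,H)=j\,\Omega_n(K,H)$, and since $j^{1/n}\to 1$,
\begin{equation*}
\limsup_{n\to\infty}\Omega_n(K_j,H)^{1/n}\leq q^{-1}.
\end{equation*}
A standard orthogonal-projection argument gives $\Omega_n(K_j,\bar\V)=\Omega_n(K_j,H)$, so \cref{CorollaryOfGeneralSard-intro} applied to $f|_{\bar\V}\in\mathscr{P}^m_d(\bar\V)$ on each $K_j$ yields
\begin{equation*}
\dim_e\!\bigl(f(\mathrm{Crit}_\nu(f|_{\bar\V})\cap K_j)\bigr)\leq\nu+\frac{\log\beta_0}{\log q}.
\end{equation*}

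To conclude, the first step and the exhaustion give
\begin{equation*}
f(\mathrm{Crit}_\nu(f|_\V))=\bigcup_{j\in\N}f(\mathrm{Crit}_\nu(f|_\V)\cap K_j)\subseteq\bigcup_{j\in\N}f(\mathrm{Crit}_\nu(f|_{\bar\V})\cap K_j),
\end{equation*}
and the claimed bound on $\dim_\mathcal{H}$ follows from $\dim_\mathcal{H}\leq\dim_e$ together with the countable stability of the Hausdorff dimension. The last assertion, giving the same bound for the smaller set $f(\mathrm{Crit}(f)\cap\V)$, is immediate from the inclusion $\mathrm{Crit}_\nu(f)\cap\V\subseteq\mathrm{Crit}_\nu(f|_\V)$, since restricting the domain cannot increase the rank of $Df(x)$. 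The most delicate ingredient is the density-based identification $\mathrm{Crit}_\nu(f|_\V)=\mathrm{Crit}_\nu(f|_{\bar\V})\cap\V$ of the first step: without it, the argument would only control the strictly smaller set $\mathrm{Crit}_\nu(f)\cap\V$. The passage from entropy to Hausdorff dimension in the statement is then forced, since only $\dim_\mathcal{H}$ is countably stable and thus compatible with the non-closed exhaustion $\V=\bigcup_jK_j$.
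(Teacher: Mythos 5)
Your proposal is correct and follows essentially the same route as the paper: pass to the closure $\bar{\V}$ (checking that the $n$--width is unchanged and that $\crit_\nu(f|_{\V})\subseteq\crit_\nu(f|_{\bar\V})$, which you do via the image $D_xf(\bar\V)=D_xf(\V)$ where the paper uses the dual covector argument), exhaust $\V$ by dilates of the closed balanced convex hull of $K$, apply \cref{CorollaryOfGeneralSard-intro} to each piece, and conclude by the countable stability of the Hausdorff dimension. All steps, including the scaling $\Omega_n(K_j,H)=j\,\Omega_n(K,H)$ and the final reduction from $f(\crit(f)\cap\V)$, match the paper's proof.
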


\begin{remark}\label{rmk:frechetdiffrestr}
The restriction $f|_{\V}: \V \to \R^m$ is a $\mathcal{C}^1$ map in the Fréchet sense from the normed vector space $(\V,\|\cdot\|)$ to $\R^m$, with $D_u (f|_{\V}) = (D_u f)|_{\V}$ for all $u\in \V$.
\end{remark}

\begin{example}
Let $q \in (1,\infty)$ with $q>\beta_0(d,m)$, where $\beta_0(d,m)>1$ is the number in \cref{CorollaryOfGeneralSard-intro,thm:main2intro}, for $d,m \in \N$. Let $H=\ell^2$ with the usual norm. Consider the subset $K\subset B_H(1)$ given by
\begin{equation}
K = \left\{u \in \ell^2 \,\middle\vert\,   \sum_{j=1}^\infty |u_j|^2 q^{2j} \leq 1\right\}.
\end{equation}
One can easily verify that $K$ is compact. Furthermore, the linear space $\V = \mathrm{span}(K)$ is dense in $H$. We can estimate the $n$--width of $K$ as follows. For $n\in \N$ consider the $n$-dimensional subspaces $E_n = \{x\in \ell^2\mid x_j=0,\,\forall\, j\geq n+1\} \subset H$. From \eqref{eq:defnwidthintro}, we obtain
\begin{align}
\Omega_n(K,H)^2 & \leq \sup_{u\in K} \inf_{v\in E_n} \|u-v\|^2 \\
& = \sup_{u\in K} \sum_{j=n+1}^{\infty} |u_j|^2\\
& \leq  q^{-2(n+1)} \sup_{u\in K} \sum_{j=1}^{\infty} |u_j|^2 q^{2j} \leq  q^{-2(n+1)}.
\end{align}
Hence $\limsup_{n} \Omega_n(K,H)^{1/n} \leq q^{-1}$, with $q > \beta_0(d,m)$. Thus, by \cref{thm:main2intro} it holds (in particular)
\begin{equation}
\mu\bigg(f\big(\mathrm{Crit}(f|_{\V})\big)\bigg) = \mu\bigg(f\big(\mathrm{Crit}(f)\cap \V \big)\bigg) = 0, \qquad \forall\,f\in \mathscr{P}_d^m(H).
\end{equation}
Note that even if $\V \subset H$ is dense, the unrestricted map $f :H\to \R^m$ may not have the Sard property (see e.g. the Kupka counterexamples in \cref{sec:point2}).
\end{example}

\Cref{CorollaryOfGeneralSard-intro,thm:mainintro,thm:main2intro} are deduced from a more general result for maps that are ``well-approximated'' by polynomials. This can be regarded as our main result concerning the Sard property, and does not make use of the concept of $n$--width. We report here the statement (see \cref{GeneralSardTheorem}).

\begin{theoremintro}[Sard criterion for well-approximated maps]\label{GeneralSardTheorem-intro}
Let $d,m \in \N$. There exists a constant $\beta_0 = \beta_0(d,m) >1$ such that the following holds. Let $H$ be a Hilbert space, and let $f: H\to \R^m$ be a $\mathcal{C}^1$ map such that its differential $Df : H \to \mathcal{L}(H,\R^m)$ is weakly continuous. Let $B\subset H$ be a bounded set with this approximation property: there exist a sequence $E_n\subset H$ of linear subspaces, $\dim(E_n) = n$, and polynomial maps $f_n: E_n \to \R^m$ with uniformly bounded degree:
\begin{equation}
\sup_{n \in \N } \deg f_n \leq d < \infty,
\end{equation}
such that for some $q>1$, $c\geq 0$, and all large enough $n$ it holds
\begin{equation}\label{ipotesiMainTheorem-intro}
		\sup_{x \in B}
		\bigg(\|f(x)  - f_n \circ \pi_{E_n} (x) \|
		+
		\| \left(D_x f\right)|_{E_n}
		- D_{\pi_{E_n}(x)} f_n  \|_{\op}\bigg)
		\leq 
		c q^{-n}.
	\end{equation}
Then for all $\nu \leq m-1$ it holds
\begin{equation}
		\dim_e\bigg( f\big( \crit_{\nu} (f) \cap B \big)\bigg) 
		\leq 
		\nu + \frac{\ln \beta_0}{ \ln q}.
	\end{equation}
In particular, if $q> \beta_0$, then $f$ satisfies the Sard property on $B$:
	\begin{equation}
	\mu\bigg(f(\crit(f)\cap B)\bigg)=0.
	\end{equation}
\end{theoremintro}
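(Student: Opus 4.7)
The plan is to implement a quantitative Sard argument in the style of Yomdin and Comte, using the polynomial approximants $f_n$ to reduce the infinite-dimensional question to a sequence of finite-dimensional covering estimates, and then to extract the entropy bound by matching the approximation scale against the target covering scale. First I fix $R>0$ with $K\subset B_H(R)$, so that $\pi_{E_n}(K)\subset B_{E_n}(R)$ for every $n$. Using the approximation hypothesis \eqref{ipotesiMainTheorem-intro}, I transfer the $\nu$-critical condition on $f$ to an \emph{approximate} rank condition on $f_n$: if $x\in\crit_\nu(f)\cap K$, then $(D_x f)|_{E_n}$ has rank at most $\nu$, so $D_{\pi_{E_n}(x)} f_n$ lies within operator-norm distance $\delta_n := c q^{-n}$ of this rank-$\leq\nu$ linear map. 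Equivalently, its $(\nu+1)$-th singular value satisfies $\sigma_{\nu+1}(D_{\pi_{E_n}(x)} f_n) \leq \delta_n$, so $\pi_{E_n}(x)$ belongs to the semialgebraic set
\begin{equation*}
\Sigma_n^\nu := \bigl\{ y\in B_{E_n}(R) \, : \, \sigma_{\nu+1}(D_y f_n)\leq \delta_n \bigr\} \subset E_n.
\end{equation*}

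The central step is a finite-dimensional covering estimate: there exists $\beta_0 = \beta_0(d,m)>1$ (the semialgebraic constant of \cref{rmk:beta0}) such that
\begin{equation*}
N\bigl(f_n(\Sigma_n^\nu),\, \delta_n\bigr) \leq C(R,d,m)\, \beta_0^n\, \delta_n^{-\nu},
\end{equation*}
where $N(\cdot, r)$ is the minimal number of closed $r$-balls needed to cover a set. The factor $\beta_0^n$ arises from an effective Milnor--Thom-type count of the cells of the semialgebraic set $\Sigma_n^\nu$, whose defining equations and inequalities have degree controlled only by $d$, $m$, $\nu$; the factor $\delta_n^{-\nu}$ reflects that on each cell the image of $f_n$ is, up to error $\delta_n$, essentially $\nu$-dimensional. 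This is where the main technical difficulty lies: classical quantitative Sard theorems of Yomdin--Comte treat the \emph{exact} critical set, while here we must handle the $\delta_n$-relaxed rank condition. The natural matching between the approximation error and the singular-value threshold (both of order $\delta_n$) is essential, and obtaining a base $\beta_0$ independent of the source dimension $n$ is precisely the content of effective real semialgebraic geometry.

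The conclusion follows by optimizing $n$ against a target scale $\varepsilon$. By \eqref{ipotesiMainTheorem-intro}, $f(\crit_\nu(f)\cap K)$ is contained in the $\delta_n$-neighborhood of $f_n(\Sigma_n^\nu)$, so
\begin{equation*}
N\bigl(f(\crit_\nu(f)\cap K),\, 2\delta_n\bigr) \leq C\, \beta_0^n\, \delta_n^{-\nu}.
\end{equation*}
Given $\varepsilon>0$ small, I choose $n$ so that $2\delta_n \asymp \varepsilon$, i.e.\ $n \asymp \ln(1/\varepsilon)/\ln q$. Then $\beta_0^n \asymp \varepsilon^{-\ln\beta_0/\ln q}$, which yields
\begin{equation*}
N\bigl(f(\crit_\nu(f)\cap K),\, \varepsilon\bigr) \leq C'\, \varepsilon^{-(\nu + \ln\beta_0/\ln q)},
\end{equation*}
and hence $\dim_e\bigl(f(\crit_\nu(f)\cap K)\bigr) \leq \nu + \ln\beta_0/\ln q$. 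For $\nu = m-1$ and $q > \beta_0$, the right-hand side is strictly less than $m$, so the image has zero Lebesgue measure in $\R^m$, proving the Sard property.
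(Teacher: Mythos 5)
Your overall architecture is the same as the paper's (reduce to the finite-dimensional polynomial approximants, prove a covering bound for the image of an approximate critical set with base $\beta_0^n$ independent of $n$, then match the approximation scale $\delta_n=cq^{-n}$ against the covering scale), and your final scale-matching paragraph is correct. The gap is in the ``central step'', which you assert rather than prove and which, as formulated, is false. Your set $\Sigma_n^\nu$ constrains only $\sigma_{\nu+1}(D_yf_n)$. A covering bound $N(f_n(\Sigma_n^\nu),\delta_n)\leq C(R,d,m)\,\beta_0^n\,\delta_n^{-\nu}$ needs, besides the Milnor--Thom cell count, a bound on the $\nu$-dimensional size of $f_n(\Sigma_n^\nu)$ (its $\nu$-th Vitushkin variation), which is governed by $\sup_{y\in\Sigma_n^\nu}\sigma_1(D_yf_n)\cdots\sigma_\nu(D_yf_n)$ --- and nothing in your definition controls this. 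The hypothesis \eqref{ipotesiMainTheorem-intro} controls $f_n$ and $Df_n$ only on $\pi_{E_n}(K)$, which is a thin subset of $B_{E_n}(R)$; on the rest of the ball the degree-$d$ polynomial $f_n$ can be arbitrarily large. For instance, with $f\equiv 0$, $\pi_{E_n}(K)\subset\{y_1=0\}$, $m=2$, $\nu=1$ and $f_n(y)=(Ay_1^2,0)$, the approximation hypothesis holds, $\Sigma_n^1=B_{E_n}(R)$, yet $f_n(\Sigma_n^1)$ contains a segment of length $AR^2$, so no constant $C(R,d,m)$ works. Symptomatically, you never use the hypothesis that $Df$ is weakly continuous, which is precisely what is needed to close this gap.

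The paper's proof repairs exactly this point. Step 1 uses the weak continuity of $Df$ and the weak precompactness of the bounded set $K$ to get $\Sigma_i:=\sup_{K}\sigma_i(D_xf)<\infty$, so that $\crit_\nu(f)\cap K\subset C^{\Lambda}(f)$ with $\Lambda=(\Sigma_1,\dots,\Sigma_\nu,0,\dots,0)$. Step 2 (via the Weyl inequality, \cref{SingolarValuesLipschitz}) then gives $\pi_{E_n}(C^{\Lambda}(f)\cap K)\subset C^{\Lambda^\varepsilon}(f_n)\cap B_{E_n}(r)$, where $C^{\Lambda^\varepsilon}(f_n)$ constrains \emph{all} $m$ singular values: the first $\nu$ by $\Sigma_i+\varepsilon$ and the remaining ones by $\varepsilon$. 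This set is semialgebraic with a diagram independent of $n$ and of $\Lambda^\varepsilon$ (\cref{DiagramLambdaCrit}), and the quantitative variations estimate \cref{thm:estimatevariations} --- whose proof rests on the quantitative definable choice \cref{thm:SemialgebraicSelection}, which is the actual content behind your ``effective real semialgebraic geometry'' remark --- yields $V_i\leq \mathrm{cst}(m,r)\,n^m\beta_0^n\Lambda_0^\varepsilon\cdots\Lambda_i^\varepsilon$. Summing against $\varepsilon^{-i}$ via \cite[Thm.\ 3.5]{ComteYomdin} and using $\Lambda_h=0$ for $h>\nu$ produces the covering bound you want (with a harmless extra factor $n^m$), after which your concluding optimization in $n$ goes through.
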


By means of the general \cref{GeneralSardTheorem-intro} we single out a class of maps satisfying the Sard property on the whole domain of definition. This is the content of \cref{thm:sardseries}, that we report here. The case $m=1$ corresponds to \cite[Thm.\ 10.12]{ComteYomdin}. In the statement, we fix a Hilbert basis $\{e_i\}_{i \in \N} \subset H$. For $k \in \N$, we set $E_k :=\mathrm{span}\{e_1, \ldots, e_k\} $ and we denote by $\pi_k: H  \to E_k $ the corresponding orthogonal projection, that is if  $x = \sum_{i} x_i e_i$, then $\pi_k (x) = (x_1 , \dots , x_k)$.

\begin{theoremintro}[Global Sard for special maps]\label{thm:main3intro}
Let $H$ be a separable Hilbert space. For all $k \in \N$, let $p_k: E_k \to \R^m$ be polynomial maps with 
	$\sup_{k \in \N} \deg p_k \leq d$ for some $d \in \N$, and such that
	\begin{equation}\label{eq:boundball1-intro}
\sup_{x \in B_{E_k} (1)} 
\|p_k (x)\|
\leq  q^{-k}, \qquad \forall\, k \in \N,
\end{equation}
for some $q>1$. Then the map $f: H \to \mathbb{R}^m$ defined by
\begin{equation}
f(x):= \sum_{k=1}^{\infty} p_k(x_1 , \dots ,x_k), \qquad \forall \, x \in H,
\end{equation}
is well--defined, $f \in \mathscr{P}_d^m(H)$ (see \cref{def:polyH}), and for all $\nu \leq m-1$ and $r>0$ it holds
\begin{equation}\label{eq:entropyestimatemaps-intro}
	\dim_e \bigg(f\left( \crit_{\nu} (f) 
	\cap 
	B_{H}(r)  \right)\bigg)
	\leq 
	\nu + 
	 \frac{\ln \beta_0}{ \ln q},
\end{equation}
where $\beta_0 =\beta_0(d,m)>1$ is the same constant given by \cref{GeneralSardTheorem-intro}. In particular, if $q> \beta_0$, then $f$ satisfies the Sard property globally on $H$:
\begin{equation}
\mu\bigg(f\big(\crit (f) \big)\bigg)=0.
\end{equation}
\end{theoremintro}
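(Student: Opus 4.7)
The plan is to deduce \cref{thm:main3intro} from the general criterion \cref{GeneralSardTheorem-intro} applied on each ball $K = B_H(r)$, with the natural truncations $f_n := \sum_{k=1}^{n} p_k$ on $E_n$. Once the hypotheses of \cref{GeneralSardTheorem-intro} are verified, the entropy estimate \eqref{eq:entropyestimatemaps-intro} is immediate on every ball; the global Sard property follows on $H$ via the countable decomposition $\crit(f) = \bigcup_{r \in \N} \crit(f) \cap B_H(r)$ and countable subadditivity of $\mu$.

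The main preliminary work is to verify that $f$ is well-defined and belongs to $\mathscr{P}_d^m(H)$, in particular that it is Fréchet $\mathcal{C}^1$ with weakly continuous and locally Lipschitz differential. The workhorse is the standard dimension-free Markov/Chebyshev inequality: for any polynomial $p\colon E \to \R^m$ of degree $\leq d$ on a finite-dimensional normed space with $\sup_{B_E(1)}\|p\| \leq M$, one has $\sup_{B_E(r)}\|p\| \leq C_d \max(1,r)^d M$ and $\sup_{B_E(r)} \|Dp\|_{\op} \leq C'_d \max(1,r)^{d-1} M$, with constants depending only on $d$. Combined with hypothesis \eqref{eq:boundball1-intro}, this yields $\sup_{B_{E_k}(r)}\|p_k\| \leq C(d,r)\, q^{-k}$, together with the analogous bounds for $Dp_k$ and $D^2 p_k$. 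Hence the series defining $f$ and the formal series of its first two differentials converge uniformly on every ball of $H$, which gives Fréchet $\mathcal{C}^1$ regularity with locally Lipschitz differential.

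To establish membership in $\mathscr{P}_d^m(H)$, I use that for any finite-dimensional $E \subset H$, each term $p_k \circ \pi_{E_k}|_E$ is a polynomial of degree $\leq d$ on $E$; since polynomials of degree $\leq d$ on a finite-dimensional space form a finite-dimensional (hence closed) subspace of $\mathcal{C}^0$, the uniform limit $f|_E$ is itself a polynomial of degree $\leq d$. Weak continuity of $Df$ follows because each $Dp_k \circ \pi_{E_k}$ is determined by the finitely many continuous (and hence weakly continuous) linear functionals $x \mapsto x_j$, $j \leq k$, and a uniform-on-bounded-sets limit of weakly sequentially continuous maps is again weakly sequentially continuous.

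Finally, fix $K = B_H(r)$ and check hypothesis \eqref{ipotesiMainTheorem-intro} of \cref{GeneralSardTheorem-intro} with the $E_n$ of the statement and $f_n := \sum_{k=1}^n p_k$. By the bounds above, for every $x \in B_H(r)$,
\begin{equation*}
\|f(x) - f_n \circ \pi_{E_n}(x)\| \leq \sum_{k>n} C(d,r)\, q^{-k} = \frac{C(d,r)}{q-1}\, q^{-n},
\end{equation*}
and a symmetric bound holds for the differential term. Thus \eqref{ipotesiMainTheorem-intro} is satisfied with the same $q$ and with a constant depending only on $d, r, q$, and \cref{GeneralSardTheorem-intro} applies to complete the proof. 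The only non-routine ingredient is the dimension-free nature of the Markov/Chebyshev inequalities used above, essential so that the constants $C(d,r)$ remain independent of $k$.
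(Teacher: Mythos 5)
Your proof is correct and follows the same architecture as the paper's: first establish $f \in \mathscr{P}_d^m(H)$ (this is the paper's \cref{RegolarSeriePolinomiali}), then apply \cref{GeneralSardTheorem-intro} on each ball $B_H(r)$ with the truncations $f_n = \sum_{k \leq n} p_k$, and conclude globally by exhausting $H$ with countably many balls. The one genuine difference is the quantitative lemma controlling $\sup_{B_{E_k}(r)}\|p_k\|$ in terms of $\sup_{B_{E_k}(1)}\|p_k\|$: you invoke the dimension-free Chebyshev growth bound (restrict to lines through the origin and apply the univariate estimate $|p(t)|\leq M\,|T_d(t)|$ for $|t|\geq 1$, then combine with the Kellogg--Markov inequality of \cref{MarkovLemma} for the differential), which gives a constant $C(d,r)$ independent of $k$; the paper's \cref{lem:1tor} instead bounds coefficients one by one and picks up a factor $k^d$. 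Your route yields the cleanly geometric tail $\sum_{k>n} C q^{-k} = \tfrac{C}{q-1}q^{-n}$, matching hypothesis \eqref{ipotesiMainTheorem-intro} verbatim, whereas the paper obtains $\sum_{k>n} k^d q^{-k}$ and must absorb the polynomial factor (via the remark following \cref{GeneralSardTheorem}, or a limit $q'\uparrow q$) --- so your variant is marginally cleaner, and the dimension-free inequality you use is indeed a standard, correct fact. The remaining minor deviation --- deducing local Lipschitzness of $Df$ from a uniform bound on the second differentials on balls rather than the paper's direct shift-and-Markov estimate --- is also valid, since balls are convex.
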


As a consequence of \cref{CorollaryOfGeneralSard-intro,thm:sintesiexamples-intro}, we see that there is a threshold on the $n$--width for the validity of the Sard property. Motivated by this fact, for a compact set $K\subset H$ we define the quantity:
\begin{equation}\label{eq:defomegaKH}
	\omega(K , H):=
	\limsup_{n\to \infty}
	\Omega_n(K, H)^{1/n}
	\in [ 0 , 1].
\end{equation}
Roughly speaking, smaller $\omega(K,H)$ means that $K$ is ``more compact'' in $H$, and better approximated by finite-dimensional subspaces.
We obtain the following statement (see \cref{thm:main}).
	\begin{theoremintro}[Sard threshold on compacts]\label{thm:mainintro}
	For all $d,m\in \mathbb{N}$, there exists $\omega_0(d,m)\in (0,1]$ such that
	\begin{enumerate}[(i)]
	\item \label{item:1mainintro} for every $f\in \mathscr{P}_{d}^m(H)$ and for every compact set $K\subset H$ with $\omega(K,H)<\omega_0(d,m)$,
	\begin{equation}
	\mu\bigg(f\left(\mathrm{crit}(f)\cap K\right)\bigg)=0;
	\end{equation}
	\item \label{item:2mainintro} for every $\omega>\omega_0(d,m)$, with $\omega \in (0,1]$, there exist $f\in \mathscr{P}_{d}^m(H)$ and a compact set $K\subset H$ with $\omega(K,H)=\omega$ and such that
	\begin{equation}
	\mu\bigg(f\left(\mathrm{crit}(f)\cap K\right)\bigg)>0.
	\end{equation}
	\end{enumerate}
	\end{theoremintro}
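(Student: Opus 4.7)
The plan is to set
\[
\omega_0(d,m) := \sup\bigl\{\omega \in [0,1] \,:\, \mu\bigl(f(\mathrm{crit}(f)\cap K)\bigr) = 0 \text{ for every } f \in \mathscr{P}_d^m(H) \text{ and every compact } K\subset H \text{ with } \omega(K,H)<\omega\bigr\},
\]
and to verify items (i) and (ii) with this choice. The defining set $S\subset[0,1]$ is downward-closed (tightening the hypothesis $\omega(K,H)<\omega$ preserves the Sard conclusion), so $S$ is an interval $[0,\omega_0(d,m))$ or $[0,\omega_0(d,m)]$. By \cref{CorollaryOfGeneralSard-intro} every $\omega\leq 1/\beta_0(d,m)$ lies in $S$, hence $\omega_0(d,m)\geq 1/\beta_0(d,m)>0$, placing $\omega_0(d,m)\in(0,1]$ as required. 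Item (i) is then immediate: given a compact $K$ with $\omega(K,H)<\omega_0(d,m)$, pick any $\omega'\in S$ strictly between $\omega(K,H)$ and $\omega_0(d,m)$ and conclude from membership of $\omega'$ in $S$.

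For item (ii), fix $\omega\in(\omega_0(d,m),1]$. The exclusion $\omega\notin S$ supplies a counterexample pair: some $f_0\in\mathscr{P}_d^m(H)$ and a compact $K_0\subset H$ with $\omega(K_0,H)<\omega$ and $\mu\bigl(f_0(\mathrm{crit}(f_0)\cap K_0)\bigr)>0$. I would then enlarge $K_0$ to a compact $K\supset K_0$ with $\omega(K,H)=\omega$ exactly while keeping the map $f:=f_0$, so that $\mathrm{crit}(f_0)\cap K\supset \mathrm{crit}(f_0)\cap K_0$ automatically preserves the positive-measure image. Concretely, pick an infinite-dimensional subspace $H_1\perp\overline{\mathrm{span}}(K_0)$, an orthonormal sequence $\{\eta_k\}_{k\geq 1}$ in $H_1$, and set
\[
K := K_0 \cup T_\omega, \qquad T_\omega := \{\omega^k\eta_k : k\geq 1\}\cup\{0\}.
\]
Compactness is clear, and $\omega(K,H)\geq \omega(T_\omega,H)=\omega$ follows by monotonicity of the Kolmogorov widths, since the optimal $n$-dimensional flag for $T_\omega$ is $\mathrm{span}(\eta_1,\dots,\eta_n)$, giving $\Omega_n(T_\omega,H)=\omega^{n+1}$.

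The main obstacle is the matching upper bound $\omega(K,H)\leq\omega$. A naive orthogonal-decomposition estimate of approximating subspaces into pieces tailored to $\overline{\mathrm{span}}(K_0)$ and $H_1$ gives only $\Omega_n(K,H)\leq \inf_{j+k=n}\max\bigl(\Omega_j(K_0,H),\Omega_k(T_\omega,H)\bigr)$, which yields $\omega(K,H)\leq \omega^{s}$ for some $s\in(0,1)$ depending on $\omega$ and $\omega(K_0,H)$ — strictly larger than $\omega$ whenever $\omega<1$. To reach exact equality I would refine the construction so that a single flag of finite-dimensional subspaces realizes the Kolmogorov decay of both pieces simultaneously: align the orthonormal sequence $\{\eta_k\}$ with an optimal Kolmogorov flag for the symmetric convex hull of $K_0$, so that the same subspaces approximate $K_0$ and $T_\omega$ and force $\Omega_n(K,H)\asymp\omega^{n+1}$. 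A fallback plan, if this direct alignment is awkward, is to exploit the continuity and monotonicity of $\alpha\mapsto \omega\bigl(K_0\cup\{\alpha^k\eta_k\}_{k\geq 1},H\bigr)$ and apply an intermediate-value argument to tune $\alpha$ so that the resulting compact set has Kolmogorov rate exactly $\omega$.
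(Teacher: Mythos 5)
Your overall strategy is the same as the paper's: define $\omega_0(d,m)$ as the supremum of a set of ``good'' rates, get positivity from \cref{CorollaryOfGeneralSard-intro}, and unwind the definition. The only difference in bookkeeping is that the paper takes the supremum over $\{\omega: \text{every compact } K \text{ with } \omega(K,H)=\omega \text{ is good}\}$, which makes item (ii) tautological (if $\omega>\sup$, then $\omega$ is not in the set, so a counterexample with rate exactly $\omega$ exists by definition), whereas your strict-inequality set makes item (i) tautological. In either formulation the two items together assert that the set of good rates is an interval $[0,\omega_0)$ or $[0,\omega_0]$, and the entire nontrivial content beyond \cref{CorollaryOfGeneralSard-intro} is the monotonicity statement you isolate: a compact counterexample at some rate $\tau$ must produce compact counterexamples at every rate in $(\tau,1]$. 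The paper's proof dispatches this with ``both items follow''; you are right that it does not follow formally without the enlargement step, so you have correctly located where the work is.

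That step, however, you do not complete, and neither of your two repairs closes it. You correctly observe that the orthogonal union $K=K_0\cup T_\omega$ only yields $\omega\leq\omega(K,H)\leq\omega^{s}$ with $s=\ln\omega(K_0,H)/(\ln\omega(K_0,H)+\ln\omega)<1$, and for orthogonal ``diagonal'' sets this upper bound is essentially attained, so the naive construction genuinely overshoots. Repair (a) — aligning $\{\eta_k\}$ with an optimal Kolmogorov flag of $K_0$ — presupposes a \emph{nested} sequence $E_1\subset E_2\subset\cdots$ with $\limsup_n(\sup_{u\in K_0}\mathrm{dist}(u,E_n))^{1/n}\leq\omega$. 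Optimal subspaces at different dimensions need not be nested, and the natural way to force nestedness (summing optimal subspaces along a sparse sequence of dimensions) inflates the dimension count and degrades the exponential rate by a fixed factor in the exponent (one gets $\alpha^{(M-1)/M^2}$ rather than $\alpha$), which need not be $\leq\omega$. Repair (b) — the intermediate-value argument in $\alpha$ — requires continuity of $\alpha\mapsto\omega(K_0\cup T_\alpha,H)$; this map is monotone, but it is a $\limsup$ of $n$-th roots of a monotone family, and continuity (equivalently, the intermediate-value property) is exactly what is in question, so the fallback is circular as stated. A smaller issue: the exact identity $\Omega_n(T_\omega,H)=\omega^{n+1}$ is asserted without proof; since $T_\omega$ is countable the usual ball lower bound does not apply, though a direct argument gives $\Omega_n(T_\omega,H)\geq\omega^{n+1}/\sqrt{n+1}$, which suffices for $\omega(T_\omega,H)=\omega$. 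In summary: same architecture as the paper, a correctly identified but unproved key step in item (ii), and two proposed fixes each resting on an unjustified claim.
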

\begin{remark}
By construction, and \cref{CorollaryOfGeneralSard-intro}, $\omega_0\geq \beta_0^{-1}$. Furthermore, if $d\geq 3$, \cref{thm:sintesiexamples-intro} yields $\omega_0 <1$, so that \cref{item:2mainintro} is non-vacuous in these cases.
\end{remark}

	\subsection{Quantitative semialgebraic geometry}\label{sec:introsemialg}	
	
	The proof of \cref{GeneralSardTheorem-intro} needs some fine properties of semialgebraic sets. The strategy of the proof is conceptually similar to Yomdin's \cite{YomdinApproxCompl}, and uses the theory of \emph{variations}, introduced by Vitushkin \cite{Vit1, Vitbook} and developed in \cite{ComteYomdin}. 

Let us denote by $V_i(S)$ the $i$-th variation of a semialgebraic set, which is a sort of $i$-dimensional volume, (see \cref{defvariation}). Furthermore, recall that for a $C^1$ map $F:\R^n\to \R^m$, and $\Lambda=(\Lambda_1, \ldots, \Lambda_m)\in \R^m_+$ the set of almost-critical values of $F$ is defined by
	\begin{equation}
	C^{\Lambda}(F):=
	\bigg\{ x \in \R^n\,\bigg| \,
	\sigma_i(D_x F ) 
	\leq \Lambda_i, 
	\quad
	\forall\, i=1,\dots,m
	\bigg\},
	\end{equation}
	where $\sigma_1(D_x F)\geq \cdots \geq \sigma_m(D_xF)$ are the singular values of $D_xF$, see \cref{def:singval} (here we assume $n\geq m$). 	In \cite[Cor.\ 7.4]{ComteYomdin} a quantitative estimate on the variations of the almost-critical values of polynomial maps has been obtained. 
	In that estimate the dimensional parameter $n$ does not appear explicitly. We obtain \cref{thm:estimatevariations-intro} below, which makes this dependence explicit (see \cref{thm:estimatevariations}).
	
	\begin{theoremintro}[Quantitative variations estimates]\label{thm:estimatevariations-intro}
	Let $n\geq m$, and $p: \R^n \to \R^m$ be a polynomial map with components of degree at most $d$. For  $ i= 0, \dots, m$, $\Lambda=(\Lambda_1 , \dots , \Lambda_m)\in \R^m_+$ and $r >0$, we have  
				\begin{equation}
		V_i( p ( C^{ \Lambda }( p ) \cap B_{\R^n}(r) ))
			\leq
			\mathrm{cst}(m,r)n^m			\beta_0^n \Lambda_{0} 
			\cdots \Lambda_{i},
		\end{equation}
					where $\beta_0=\beta_0(d,m)>1$ depends only on $d$ and $m$, $\mathrm{cst}(m,r)$ depends only on $m,r$, and we set $\Lambda_0=1$.
	\end{theoremintro}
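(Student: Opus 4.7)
The plan is to revisit the argument of \cite[Cor.~7.4]{ComteYomdin}, which already establishes a variations bound of this form for polynomial maps but leaves the dependence on the source dimension $n$ implicit, and to carefully track every place where $n$ enters so as to isolate a single exponential-in-$n$ contribution with base $\beta_0=\beta_0(d,m)$. The backbone of the argument is the integral-geometric (Cauchy--Crofton) representation of the $i$-th variation of a set $T\subset \R^m$ as a weighted average, over affine $(m-i)$-planes $L\subset \R^m$, of the number of connected components $b_0(T\cap L)$; applied to $T=p(C^\Lambda(p)\cap B_{\R^n}(r))$, this reduces the estimate to a slice-counting problem plus a Crofton-type bound on the relevant portion of the affine Grassmannian.

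First I would lift slice-counting to the source via
\begin{equation*}
b_0\bigl(p(C^\Lambda(p)\cap B_{\R^n}(r))\cap L\bigr)\;\leq\; b_0\bigl(C^\Lambda(p)\cap B_{\R^n}(r)\cap p^{-1}(L)\bigr),
\end{equation*}
and observe that the set on the right is semialgebraic in $\R^n$: the singular-value constraints $\sigma_j(D_x p)\leq \Lambda_j$ become polynomial inequalities in the entries of $D_x p$ (via the elementary symmetric functions of $D_x p\,(D_x p)^\top$), the condition $p(x)\in L$ is cut by $i$ polynomials of degree $d$, and $B_{\R^n}(r)$ contributes one degree-$2$ inequality. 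Crucially, both the \emph{number} of defining polynomials and their \emph{degrees} depend only on $d$ and $m$, not on $n$. The Oleinik--Petrovsky--Thom--Milnor bound then gives
\begin{equation*}
b_0\bigl(C^\Lambda(p)\cap B_{\R^n}(r)\cap p^{-1}(L)\bigr)\;\leq\; \beta_0(d,m)^n,
\end{equation*}
and this is the single source of the exponential factor of the statement, which motivates the name \emph{semialgebraic constant}.

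The complementary factor $\Lambda_0\cdots \Lambda_i$ arises from the Crofton averaging itself. On $C^\Lambda(p)$ the singular values of $Dp$ satisfy $\sigma_j(Dp)\leq \Lambda_j$, so $p(C^\Lambda(p)\cap B_{\R^n}(r))$ has $j$-dimensional transverse content at most $\mathrm{cst}(r)\Lambda_j$ in each direction; this confines the affine $(m-i)$-planes meeting the image to a region of the affine Grassmannian of natural measure at most $\mathrm{cst}(m,r)\,\Lambda_1\cdots \Lambda_i$. The polynomial overhead $n^m$ then arises from bookkeeping across the $m$ singular-value blocks when combining the Crofton integral with the slice bound, together with dimension-dependent multiplicities in the Milnor-type count.

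I expect the main difficulty to be precisely this bookkeeping: the Comte--Yomdin proof silently absorbs several $n$-dependent quantities into implicit constants, so the task is to verify that all super-polynomial $n$-dependence is confined to one application of the Milnor-type bound (rather than, say, an iterative Sard-style induction over $m$, which would compound the base $\beta_0$), and that the Crofton averaging contributes only polynomially in $n$. Once this separation is achieved, the remaining $r$- and $m$-dependent prefactors collect into $\mathrm{cst}(m,r)$ and the estimate of the theorem follows.
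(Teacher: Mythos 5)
Your skeleton is the right one and matches the paper's: write $V_i$ via the Crofton-type integral, lift the slice count $b_0(p(A)\cap \pi_Z^{-1}(x))\leq b_0(A\cap(\pi_Z\circ p)^{-1}(x))$ to the source, and apply an Oleinik--Petrovsky--Thom--Milnor bound, noting that the diagram of $A=C^\Lambda(p)\cap B_{\R^n}(r)$ has combinatorial data and degrees depending only on $(d,m)$, with $n$ entering only as the number of variables. That part is correct and is exactly how the paper obtains the factor $\beta_{\mathrm{tm}}^n$.

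The gap is in how you produce the factor $\Lambda_0\cdots\Lambda_i$. You assert that the singular-value bounds "confine the affine $(m-i)$-planes meeting the image to a region of the affine Grassmannian of natural measure at most $\mathrm{cst}(m,r)\,\Lambda_1\cdots\Lambda_i$", but no mechanism is given, and the obvious one --- the area formula converting $\sigma_1(D_xp)\cdots\sigma_i(D_xp)\leq\Lambda_1\cdots\Lambda_i$ into a bound on $\mathcal{H}^i(\pi_Z\circ p(A))$ --- cannot be applied to $A$, because $\dim A$ is in general larger than $i$ (e.g.\ $A$ can have nonempty interior in $\R^n$). One must first replace $A$ by an at most $i$-dimensional semialgebraic section $C$ with $p(C)$ (approximately) equal to $p(A)$, apply the area formula on $C$, and then bound $\mathcal{H}^i(C)=V_i(C)$ --- which again requires control on $b_0$ of affine slices of $C$ in terms of the original data. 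A section produced by semialgebraic triviality, or by tracking the constants in the Comte--Yomdin argument via Tarski--Seidenberg (your stated plan of "revisiting \cite[Cor.~7.4]{ComteYomdin} and tracking every place where $n$ enters"), yields constants that are \emph{doubly} exponential in $n$, which destroys the single factor $\beta_0^n$. This is precisely why the paper relies on a quantitative \emph{approximate definable choice} theorem (\cref{thm:SemialgebraicSelection}, proved in a separate work): it supplies a set $C_\varepsilon\subset\mathcal{U}_\varepsilon(A)$ of dimension $\leq i$ whose image is Hausdorff-close to $p(A)$ and whose affine slices have at most $\beta_{\mathrm{dc}}^e$ components, after which one passes to the limit $\varepsilon\to 0$ using continuity of $\mathcal{H}^i$ and of variations along semialgebraic families. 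Without this ingredient (or an equivalent one), the step producing $\Lambda_0\cdots\Lambda_i$ with only singly exponential overhead in $n$ does not go through; it is not bookkeeping but the main technical obstruction.
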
	

A key technical step in the proof of \cref{thm:estimatevariations-intro} is a quantitative substitute for the \emph{definable choice} theorem in semialgebraic geometry, which we prove in \cite{LRT-selection} (see \cref{thm:SemialgebraicSelection}).

	\subsection{Applications to Endpoint maps of Carnot groups}\label{sec:introsubriemannian}
		We discuss now the implications of \cref{thm:mainintro} and \cref{thm:main2intro} in the context of sub-Riemannian geometry.
		
Recall that an $m$-dimensional \emph{Carnot group} of step $s \in \N$ is a connected and simply connected Lie group $(\gr, \cdot)$ of dimension $m$, whose Lie algebra $\mathfrak{g}$ admits a stratification of step $s$, that is 
	\begin{equation}
		\alg 
		= 
		\alg_1 \oplus \dots \oplus \alg_s,
	\end{equation}
	where 
	$\alg_i \neq \{0\}$, 
	$\alg_{i+1} = [\alg_1,\alg_i]$ 
	for all $i=1,\dots,s-1$ and $[\alg_1,\alg_s]= \{0\}$. The group exponential map yields an identification $\gr\simeq \R^m$, and the first stratum $\alg_1$ of the Lie algebra defines a smooth, totally non-holonomic distribution $\Delta\subseteq T\R^m$ of rank $k:=\dim\alg_1$. 
	
	Fix a global trivialization of $\Delta$:
	\begin{equation}
	\Delta=\mathrm{span}\{X_1, \ldots , X_k\},
	\end{equation}
	where each $X_i$ is a left-invariant vector field. Let $I:=[0,1]$ be the unit interval and $H:=L^2(I,\R^k)$.  We say that a curve $\gamma: I \to \gr$ is \emph{horizontal} if it is absolutely continuous and there exist $u\in H$, called \emph{control} such that for a.e.\ $t\in I$ it holds
	\begin{equation}\label{eq:cauchyintro}
	\dot\gamma(t)=\sum_{i=1}^k u_i(t)X_i(\gamma(t)),
	\end{equation}
Furthermore, for any given $u\in  H$ there exists a unique $\gamma_u : I\to \gr$ satisfying \eqref{eq:cauchyintro} and such that $\gamma_u(0)=e$ (the identity $e\in \gr$, identified with $0\in \R^m$). Note that the class of horizontal curves does not depend on the choice of the global trivialization of $\Delta$.
	
The \emph{Endpoint map} is the map that sends a control $u$ to the the corresponding final point $\gamma_u(1)$:
	\begin{equation}
	\End: L^2(I,\R^k) \to \gr.
	\end{equation}

The \emph{Sard conjecture} is the conjecture that Endpoint map has the Sard property:
\begin{center}
\textbf{Sard conjecture:} the set $\End(\mathrm{Crit}(\End))$ has zero measure.
\end{center}
The conjecture was introduced by Zhitomirskii and Montgomery, see \cite[Sec.\ 10.2]{Montgomerybook} for general sub-Riemannian structures, where the Endpoint map can be defined in a similar way, on a suitable domain of horizontal paths, which has the structure of a Hilbert manifold. For the specific case of Carnot groups, it is known to be true for step $\leq 2$, see \cite{AGL-pathspace,LDMOPV-Sardprop}. Furthermore, the conjecture has been verified in \cite{BNV-Filiform} for filiform Carnot groups, in \cite{BV-Dynamical} for Carnot groups of rank 2 and step 4, and rank 3 and step 3, and for a handful of other specific examples described in \cite{LDMOPV-Sardprop}.

Assume that $\Delta$ is also endowed with a left-invariant norm. This choice induces a length structure on the space of horizontal curves, and correspondingly, a distance, which gives $\gr$ the structure of \emph{sub-Finsler Carnot group} (or \emph{sub-Riemannian Carnot group} if the norm is induced by a scalar product). In this case, within the set of critical points of $\End$ (also called singular horizontal paths), the energy-minimizing ones play a significant role. At the corresponding critical values, the distance loses regularity: it is never smooth and it can even lose local semiconcavity \cite[Sec.\ 4.2]{BR-Inv}. For this reason, the understanding of the ``abundance'' of such critical values is crucial. Denoting by $H_{\min}\subseteq L^2(I,\R^k)$ the set of minimizing horizontal paths starting from the identity, the \emph{minimizing Sard conjecture} can be formulated as follows:
\begin{center}
\textbf{Minimizing Sard conjecture:} the set $\End(\mathrm{Crit}(\End)\cap H_{\min})$ has zero measure.
\end{center}
The most general result to date is that the above set is a closed nowhere dense set \cite{agrasmoothness,RT-MorseSard}, which of course does not imply the minimizing Sard conjecture. The above conjecture is one of the main open problems in sub-Riemannian geometry \cite[Prob.\ 3]{A-openproblems}, \cite[Conj.\ 1]{RT-MorseSard}. In general, the problem is settled in the following cases:
\begin{itemize}
\item[-] Thanks to the work of Agrachev-Sarychev \cite{AS-Morse} and Agrachev-Lee \cite{AAPL-OT}, in absence of so-called Goh singular minimizing paths, all sub-Riemannian minimizing paths (resp.\ sub-Finsler, under suitable smoothness assumptions on the norm) are solution of a finite-dimensional Hamiltonian flow so that the corresponding set of critical values has zero measure by the finite-dimensional Sard theorem. 
Recently, Rifford proved that, more generally, the minimizing Sard conjecture holds for structures where all non-trivial Goh paths have Goh-rank $\leq 1$ almost everywhere, see \cite{R-subdiff}. For Carnot groups, these results can be applied for example when the distribution is \emph{pre-medium fat}, see \cite[(1.2)]{R-subdiff}. In passing, we remark that for generic (so, typically not Carnot) sub-Riemannian structures with distribution of rank $\geq 3$, there are no non-trivial singular Goh minimizing paths, see \cite[Cor.\ 2.5]{CJT-generic} and \cite[Thm. 8]{AG-subanalitic}. As a consequence the minimizing Sard conjecture holds true. Thanks to \cite[Cor.\ 1.4]{R-subdiff}, the latter result extends the case of rank $\geq 2$.
\item[-] The minimizing Sard conjecture is true also for Carnot groups of step $\leq 3$, see \cite{LDMOPV-Sardprop}. In this case, the problem is reduced (in a non-trivial way) to a finite-dimensional one. First, by noting that singular minimizing curves are non-singular in some proper subgroup, and then exploiting the fact that Carnot subgroups are parametrized by a finite-dimension manifold. 
\end{itemize}

Finally, we refer to \cite{ZZ-Rigid,BR-DukeMartinet, BFPR-StrongSardInventiones, BPR-Sardpreprint1, BPR-Sardpreprint3, BPR-Sardpreprint2, OV-codimensionCarnotSard, RT-MorseSard} for further works on various forms of the Sard conjecture on general sub-Riemannian structures.

The following result (\cref{RestrictionEndpointPolynomial}) connects this framework to the previous sections.
	\begin{propositionintro}[Polynomial properties of the Endpoint map]\label{RestrictionEndpointPolynomial-intro}
	Let $\gr$ be a Carnot group of topological dimension $m$, step $s$, and rank $k$. Then the Endpoint map $\End\in \mathscr{P}_s^m(H)$, for $H=L^2(I,\R^k)$
	\end{propositionintro}

As a consequence of \cref{RestrictionEndpointPolynomial-intro}, we can apply \cref{thm:main2intro} to the study of Endpoint maps of Carnot groups. We record here a first immediate corollary.

\begin{corollaryintro}[Sard criterion for Endpoint maps]\label{RisultatoEndpointGenerale-intro}
Let $\gr$ be a Carnot group of topological dimension $m$, step $s$, and rank $k$. Let $K \subset L^2(I,\R^k)$ be a compact subset and let $q>1$ be such that
\begin{equation}\label{eq:RisultatoEndpointGenerale-intro}
\limsup_{n\to\infty} \Omega_n (K , L^2(I, \R^k) )^{1/n}\leq q^{-1}.
\end{equation}
Then, letting $\V:= \mathrm{span}(K)$, the restriction $\End|_{\V}: \V \to \gr$ satisfies
\begin{equation}
\dim_{\mathcal{H}}\bigg(\End\big(\mathrm{Crit}(\End|_{\V})\big)\bigg)\leq m-1 + \frac{\log\beta_0}{\log q},
\end{equation}
where $\beta_0=\beta_0(s,m)>1$ is the same constant of \cref{CorollaryOfGeneralSard-intro}. In particular, it holds:
\begin{enumerate}[label=(\roman*)]
\item\label{i:RisultatoEndpointGenerale-intro_1} if $q>\beta_0$, the restriction $\End|_{\V}: \V \to \gr$ has the Sard property, namely
\begin{equation}
	\mu\bigg(\End\big(\mathrm{Crit}(\End|_{\V})\big)\bigg)=0.
\end{equation}
\item\label{i:RisultatoEndpointGenerale-intro_2} if $q=\infty$, the restriction $\End|_{\V}: \V \to \gr$ has the so-called strong Sard property, namely
\begin{equation}
\dim_{\mathcal{H}}\bigg(\End\big(\mathrm{Crit}(\End|_{\V})\big)\bigg)\leq m-1.
\end{equation} 
\end{enumerate}
In all previous results, $\End\big(\mathrm{Crit}(\End|_{\V})\big)$ can be replaced with the smaller set $\End\big(\mathrm{Crit}(\End)\cap \V\big)$.
\end{corollaryintro}
 We can apply \cref{RisultatoEndpointGenerale-intro} to find large sets on which the Sard property holds for the Endpoint map of Carnot groups. This is a more functional-analytic approach to the Sard conjecture that, unlike previous ones, does not resort to reduction to finite-dimensional cases.


An interesting case in which the assumptions of \cref{RisultatoEndpointGenerale-intro} can be effectively checked is given by sets of controls that are ``sufficiently regular''. We introduce the notation to state our result, see \cref{sec:Carnotrealanal} for additional details. Given $r>0$, and a closed interval $I\subset \R$, consider the set:
	\begin{equation}
	\mathcal{C}^\omega(I,\R^k;r):=\bigg\{u:I\to \R^k\,\bigg|\, 
	\textrm{$u$ is real-analytic with radius of convergence} > r \bigg\},
	\end{equation}
	endowed with the sup norm over the $r$-neighbourhood of $I$ in $\C$. More generally, let $\mathcal{C}^\omega(I,\R^k;r,\ell)$ be the set of piecewise analytic controls:
\begin{equation}
\mathcal{C}^\omega(I,\R^k;r,\ell) = \bigg\{u: I \to \R^k \,\bigg|\,  u|_{I_j} \in \mathcal{C}^\omega(I_j,\R^k;r),\,\text{ for all $j=1,\dots,\ell$} \bigg\},
\end{equation}
where $I_j = \inf I+ \left[\frac{(j-1)|I|}{\ell} ,\frac{j |I|}{\ell}\right]$, for all $j=1,\dots,\ell$, which we equip with a suitable norm (see \cref{sec:Carnotrealanal}). 
In \cref{WidthAnalitiche} we prove that the unit ball $K$ of $\mathcal{C}^\omega(I,\R^k;r,\ell)$, with $I=[0,1]$, is compact in $H$ and its $n$--width satisfies:
\begin{equation}
			\Omega_n
			( 
			K, H)
			\leq
			\frac{
				(k\ell)^{1/2} 
									}
				{  \ln r
									}
			\left(
			\frac{1}{r}
			 \right)^
			 {\floor*{\frac{n}{k\ell}}} , \qquad \forall\, n\in \N,\quad r>1.
\end{equation}
By applying \cref{RisultatoEndpointGenerale-intro} we can therefore deduce the following result (see \cref{thm:main4}).

\begin{theoremintro}[Sard property on piecewise real--analytic controls]\label{thm:main4intro}
Let $\gr$ be a Carnot group of topological dimension $m$, step $s$, and rank $k$. Let $I=[0,1]$. For $\ell\in\N$, there exists $r=r(m,s,k,\ell)>1$ such that
\begin{equation}
\mu\big(\End(\mathrm{Crit}(\End)\cap \mathcal{C}^\omega(I,\R^k;r,\ell))\big)= \mu\big(\End(\mathrm{Crit}(\End|_{\mathcal{C}^\omega(I,\R^k;r,\ell)})\big) =0.
\end{equation}
Namely, the Sard property holds on the space of piecewise real--analytic controls with radius of convergence $>r$ and with $\ell$ pieces. 
Furthermore it holds
\begin{equation}
\dim_{\mathcal{H}}\big(\End(\mathrm{Crit}(\End)\cap \mathcal{C}^\omega(I,\R^k;\infty,\ell))\big) \leq \dim_{\mathcal{H}}\big(\End(\mathrm{Crit}(\End|_{\mathcal{C}^\omega(I,\R^k;\infty,\ell)})\big) \leq m-1.
\end{equation}
Namely, the strong Sard property holds on the space of piecewise entire controls with $\ell$ pieces.
\end{theoremintro}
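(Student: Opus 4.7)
The statement should follow by a direct application of \cref{RisultatoEndpointGenerale-intro}, once the $n$--width bound of \cref{WidthAnalitiche} is used to calibrate the radius of convergence $r$ so that the resulting decay rate $q=r^{1/(k\ell)}$ exceeds the semialgebraic constant $\beta_0(s,m)$.

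I would first take $K_r\subset H$ to be the unit ball of $\mathcal{C}^\omega(I,\R^k;r,\ell)$. By \cref{WidthAnalitiche}, $K_r$ is compact in $H$ and, taking the $n$-th root of the stated width bound, satisfies $\limsup_{n\to\infty}\Omega_n(K_r,H)^{1/n}\leq r^{-1/(k\ell)}$, while its linear span is the whole space $\V_r:=\mathcal{C}^\omega(I,\R^k;r,\ell)$. Applying \cref{RisultatoEndpointGenerale-intro} with $q=r^{1/(k\ell)}$ gives
\begin{equation*}
\dim_{\mathcal{H}}\bigl(\End(\mathrm{Crit}(\End|_{\V_r}))\bigr)\leq m-1+\frac{\log\beta_0(s,m)}{\log r^{1/(k\ell)}},
\end{equation*}
together with the same bound for the smaller set $\End(\mathrm{Crit}(\End)\cap\V_r)$. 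Choosing $r=r(m,s,k,\ell)$ strictly larger than $\beta_0(s,m)^{k\ell}$ pushes the right-hand side strictly below $m$, which is the Sard conclusion for $\V=\V_r$ required by the first part of the theorem.

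For the strong Sard assertion, $W=\mathcal{C}^\omega(I,\R^k;\infty,\ell)\subset\V_r$ for every finite $r>0$, so monotonicity of the Hausdorff dimension and the previous inequality yield
\begin{equation*}
\dim_{\mathcal{H}}\bigl(\End(\mathrm{Crit}(\End)\cap W)\bigr)\leq m-1+\frac{\log\beta_0(s,m)}{\log r^{1/(k\ell)}},
\end{equation*}
and letting $r\to\infty$ gives the bound $\leq m-1$ on the smaller of the two sets appearing in the statement. To upgrade from $\mathrm{Crit}(\End)\cap W$ to the a priori larger $\mathrm{Crit}(\End|_W)$ (and similarly for $\V$ in the first assertion), I would use that $W$ contains all polynomial controls and is therefore dense in $H=L^2(I,\R^k)$; since $D_u\End\colon H\to\R^m$ is bounded linear, the image $(D_u\End)(W)$ is a dense linear subspace of the finite-dimensional space $(D_u\End)(H)$ and must coincide with it, giving $\mathrm{Crit}(\End|_W)=\mathrm{Crit}(\End)\cap W$. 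This density-continuity step is the only delicate point in the argument; the rest is essentially a careful calibration of $r$.
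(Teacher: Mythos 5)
Your argument is correct and, for the finite-$r$ assertion, it is the paper's own route: estimate the width of the unit ball $K_r$ of $\V_r=\mathcal{C}^\omega(I,\R^k;r,\ell)$ via \cref{WidthAnalitiche}, observe $\mathrm{span}(K_r)=\V_r$, and apply \cref{RisultatoEndpointGenerale-intro} with $q=r^{1/(k\ell)}$, choosing $r>\beta_0(s,m)^{k\ell}$ (which in particular meets the requirement $r>1$ of \cref{WidthAnalitiche}). Where you genuinely diverge is the entire-control space $W$: the paper invokes item (ii) of \cref{RisultatoEndpointGenerale-intro} with ``$q=\infty$'', whereas you use $W\subset\V_r$ for all finite $r$ and let $r\to\infty$ in the dimension bound. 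Note that this inclusion by itself only controls the smaller set $\End(\mathrm{Crit}(\End)\cap W)$, because a control critical for $\End|_W$ need not be critical for $\End|_{\V_r}$ (the inclusion of critical sets goes the wrong way under enlargement of the domain); your density step — $W$ contains all polynomial controls, so $(D_u\End)(W)$ is a linear, hence closed, subspace of $\R^m$ containing the dense image of $W$ under the continuous map $D_u\End$, whence $(D_u\End)(W)=(D_u\End)(H)$ and $\mathrm{Crit}(\End|_W)=\mathrm{Crit}(\End)\cap W$ — is exactly what closes that gap, and it is the same density mechanism the paper uses inside the proof of \cref{thm:main2intro}. Your route has the advantage of not having to exhibit a single compact set with superexponentially decaying width whose span is all of $W$; it also yields the set-level identity behind the displayed equality of measures for $\V$. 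No gaps.
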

\begin{remark}[Stronger versions]
By inspecting the proof of \cref{thm:main4intro}, one actually obtains more general estimates for the Hausdorff dimension of the sets of critical values as in \cref{RisultatoEndpointGenerale-intro}. Furthermore, all these results hold in a stronger form, replacing $\mathcal{C}^\omega(I,\R^k;r,\ell)$ with the larger space	
\begin{equation}
\mathrm{span}\left( \overline{\{ u \in \mathcal{C}^\omega(I,\R^k;r,\ell) \mid \|u\|_{\mathcal{C}^\omega(I,\R^k;r,\ell)} \leq 1\}}\right),\qquad r \in (0,\infty],
\end{equation}
where $\|\cdot\|_{\mathcal{C}^\omega(I,\R^k;r,\ell)}$, defined in \eqref{eq:normComegaell}, is a sort of Hardy $\infty$-norm, while the closure is w.r.t.\ $L^2(I,\R^k)$.
\end{remark}
\begin{remark}[Regularity and Sard]
Sussmann established that for real--analytic sub-Riemannian structures (such as Carnot groups), minimizing horizontal paths are real--analytic on an open and dense set of their interval of definition \cite{Suss-realanal}. In light of that result, \cref{thm:main4intro} hints at an unexpected link between the regularity problem of sub-Riemannian geodesics and the minimizing Sard conjecture.
\end{remark}
\begin{remark}[Surjectivity of the Endpoint map]\label{rmk:surj}
Of course a subset of controls $\V\subset L^2(I,\R^k)$ must satisfy $\End(\V)=\gr$ to be relevant for the theory. By the Rashevskii-Chow theorem, $\End:L^2(I,\R^k)\to \R^m$ is surjective. We observe that $\End$ is surjective also on $\mathcal{C}^\omega(I,\R^k;r,\ell)$ if $\ell \geq 4m$, since from the proof of the Rashevskii-Chow theorem any pair of points can be joined by a concatenation of $4m$ horizontal paths with constant controls, see  \cite[Sec.\ 3.2]{ABB}. Actually, the Endpoint map is surjective when restricted on any set that is dense in $L^2(I,\R^k)$, see e.g.\ \cite[Sec.\ 2.5]{Bellaiche}. It follows that $\End$ is surjective on the sets of \cref{thm:main4intro} since in any case they contain all (restrictions on $I$ of) polynomial controls.
\end{remark}

Finally, we prove in \cref{thm:polysurj} that the Endpoint map on Carnot groups is surjective already when restricted to polynomial controls of large enough degree (depending on $\gr$). This is a particular case of the following more general result (see \cref{thm:contdensurj}).
\begin{propositionintro}[Quantitative surjectivity]\label{thm:contdensurj-intro}
Let $\gr$ be a Carnot group with topological dimension $m$ and rank $k$. Let $S \subset L^2(I,\R^k)$ be a dense set. Then there exist $u_0,u_1,\dots,u_m \in S$ such that
\begin{equation}
\End(\mathrm{span}\{u_0,u_1,\dots,u_m\}) = \gr.
\end{equation}
\end{propositionintro}
In other words, one can find an $(m+1)$-dimensional vector subspace $E\subset L^2(I,\R^k)$ of controls (depending on $\gr$) such that the restriction of the Endpoint map to $E$ is surjective. Furthermore, $E$ can be assumed to have generators in any prescribed dense set $S\subset L^2(I,\R^k)$. \cref{thm:contdensurj-intro} can be seen as a more refined version (for Carnot groups) of the well--known fact that the Endpoint map is surjective when restricted on any set of controls that is dense in $L^2(I,\R^k)$, see e.g.\ \cite[Sec.\ 2.5]{Bellaiche}.

\subsection{Closing thoughts on the Sard conjecture}
We do not venture in a guess in favor of Sard conjecture on the whole $H=L^2(I, \R^k)$, instead we propose a direction of future investigation that can be approached with our techniques. Recall that $H_{\mathrm{min}}\subset H$ is the set of energy-minimizing controls. It is known that $H_{\min}$ is boundedly compact in $H$ i.e., the intersection $H_{\min}\cap B_H(r)$ is compact for all $r>0$, see \cite[Thm.\ 8.66]{ABB}, \cite{AAA-Rendiconti}. In fact, from the very recent result by Lokutsievskiy and Zelikin \cite[Cor.\ 1 $(\mathfrak{G})$]{Lev}, one can deduce the following estimate for the $n$--width on Carnot groups
\begin{equation}\label{eq:mini}
\Omega_{2n+1}(H_{\min}\cap B_H(1), H)\leq C n^{-\frac{1}{2s}}, \qquad \forall\, n \in \N,
\end{equation}
for some $C>0$ depending on $\gr$. Estimate \eqref{eq:mini} can be understood as a quantitative compactness property for $H_{\min}$. Unfortunately, the polynomial decay (w.r.t.\ $n$) of \eqref{eq:mini} is too weak to apply \cref{CorollaryOfGeneralSard-intro}, since the latter requires an exponential one.

If one could prove that, for some $q^{-1} < \omega_0(s,m)$, it holds
\begin{equation}\label{eq:ifonecould}
\Omega_n(H_{\min}\cap B_H(1),H) \leq  q^{-n},\qquad \text{as } n\to\infty,
\end{equation}
then \cref{thm:mainintro} would settle the minimizing Sard conjecture on Carnot groups. 

We also note that it would be sufficient to prove \eqref{eq:ifonecould} for the set of the so-called \emph{strictly abnormal} energy-minimizing controls $H_{\min}^{\mathrm{str.abn}}\subsetneq H_{\min}$.  Unfortunately, we were not able to produce a direct estimate of the $n$--width of (bounded subsets of) $H_{\min}$ or $H_{\min}^{\mathrm{str.abn}}$, so that we have no further evidence to support this idea towards a proof of the minimizing Sard conjecture.


In the opposite direction, and to conclude, our results can be used to identify spaces where the (general) Sard conjecture can be violated. If, for a given Carnot group $\gr$, there is a linear subset of controls $\V\subset H$ such that $\mu(\End(\mathrm{Crit}(\End)\cap \V))>0$, then the following necessary condition must hold:
\begin{equation}
\limsup_{n\to \infty} \Omega_n(\V \cap B_H(r),H)^{1/n} \geq \omega_0(s,m).
\end{equation}

\subsection{Acknowledgements}

This project has received funding from (i) the European Research Council (ERC) under the European Union's Horizon 2020 research and innovation programme (grant agreement GEOSUB, No. 945655); (ii) the PRIN project ``Optimal transport: new challenges across analysis and geometry'' funded by the Italian Ministry of University and Research; (iii) the Knut and Alice Wallenberg Foundation. The authors also acknowledge the INdAM support. The authors also wish to thank Yosef Yomdin for helpful discussions that improved the presentation of the paper.


\section{Table of notations}

\begin{xltabular}{\textwidth}{p{0.19\textwidth} X}
$\R^m_+$ \dotfill & $m$-tuples of positive real numbers \\
$B_X(x,r)$ \dotfill &  closed ball of radius $r>0$ centered at $x$ of a Banach space $X$ \\
$B_X(r)$ \dotfill &  closed ball of radius $r>0$ centered at $0$ of a Banach space $X$ \\
$B_X$ \dotfill &  closed unit ball centered at $0$ of a Banach space $X$ \\
$\mathcal{L}(X,Y)$ \dotfill & linear continuous maps between banach spaces $X$, $Y$ \\
$\|\cdot\|_{\op}$ \dotfill &  operator norm on $\mathcal{L}(X,Y)$, see \eqref{eq:operatornorm} \\
$\|\cdot\|_X$ \dotfill &  norm for a Banach space $X$ (the $X$ is omitted when there is no confusion) \\
$\mathcal{B}(X,Y)$ \dotfill & continuous bounded maps between Banach spaces $X$, $Y$ \\
$\mathcal{C}^\omega(I,\R^k;r,\ell)$ \dotfill &  piece-wise real--analytic functions, \cref{sec:Carnotrealanal} \\
$\mu$ \dotfill & Lebesgue measure \\
$\Omega_n(K,H)$ \dotfill &  Kolmogorov $n$--width of $K\subset H$, \cref{def:nwidth} \\
$\omega(K,H)$ \dotfill &  asymptotic Kolmogorov width of $K\subset H$, \cref{eq:defomegaKH} \\
$\mathcal{P}_d^m(H)$ \dotfill & set of well-approximable maps, \cref{def:polyH} \\
$\beta_0(d,m)$ \dotfill & semialgebraic constant, \cref{rmk:beta0} \\
$\omega(d,m)$ \dotfill & threshold constant, \cref{thm:mainintro}\\
$\mathcal{H}^i(A)$ \dotfill & $i$-dimensional Hausdorff measure of a rectifiable Borel set $A$ \\
$V_i(A)$ \dotfill & Vitushkin variation of a bounded semialgebraic set $A$, \cref{defvariation} \\
$b_0(S)$ \dotfill & number of connected components of a set $S$ \\
$\sigma_k(L)$ \dotfill & $k$-th singular value of a linear map $L: H\to\R^m$ on a Hilbert space, ordered in decreasing order $\sigma_1(M)\geq \dots \geq \sigma_m$, \cref{def:singval} \\
$\crit(f)$ \dotfill & critical  points of a map, \cref{eq:nucritical}  \\
$\crit_\nu(f)$ \dotfill & $\nu$-critical  points of a map, \cref{eq:nucritical}  \\
$C^{\Lambda}(f)$ \dotfill & $\Lambda$-critical (or almost-critical) points of a map, \cref{def:lambdacrit} \\
$M(\varepsilon,S)$ \dotfill & $\varepsilon$-entropy of a set $S \subset \R^m$, \cref{def:entropy} \\
$\dim_e(S)$ \dotfill & entropy dimension of a set $S \subset \R^m$, \cref{def:entropy} \\
$\mathcal{U}_{\varepsilon}(S)$ \dotfill & $\varepsilon$-neighbourhood of a subset $S\subset \R^n$ \\
$\mathrm{dist}_H$ \dotfill & Hausdorff distance \\
$\beta_{\mathrm{tm}}$ \dotfill & Thom-Milnor bound constant, \cref{thm:milnorbound} \\
$\beta_{\mathrm{dc}}$ \dotfill & definable choice constant, \cref{thm:SemialgebraicSelection} \\
$\beta_{\mathrm{yc}}$ \dotfill & Yomdin-Comte constant, \cref{BehaviourVariations1} \\
$D(S)$ \dotfill & diagram of a semialgebraic set $S$, \cref{def:diagram} \\
$\gr$ \dotfill & Carnot group (of step $s$, rank $k$, dimension $m$), \cref{CarnotGroups}\\
$\End$ \dotfill & Endpoint map, \cref{def:endpoint} \\
\end{xltabular}

\section{Quantitative variations estimates}\label{sec:quantitativesemialgebraic}

\subsection{Semialgebraic sets and maps} 

The goal of this section is to prove \cref{thm:estimatevariations-intro}.
 We begin by recalling some basic notions from semialgebraic geometry.
\begin{definition}[Semialgebraic sets and maps]\label{def:semialgebraic}
We say that a set $S\subset \R^n$ is \emph{semialgebraic} if
\begin{equation}\label{eq:defA} 
S=\bigcup_{i=1}^a\bigcap_{j=1}^{b_i}\left\{x\in \R^n\mid \mathrm{sign}(p_{ij}(x))=\sigma_{ij}\right\},
\end{equation}
for some finite set of polynomials  $p_{ij}\in \R[x_1, \ldots, x_n]$ and $\sigma_{ij}\in \{0, +1, -1\}$, where
\begin{equation}
\mathrm{sign}(r):=\begin{cases}+1&r>0,\\
-1&r<0,\\
0&r=0.\end{cases}
\end{equation}
We call the description \eqref{eq:defA} a \emph{representation} of $S$. 
 
A map $f:A\to B$ between semialgebraic sets $A\subset \R^n, B\subset \R^\ell$ is said to be \emph{semialgebraic} if its graph is a semialgebraic set in $\R^n\times \R^\ell$.
\end{definition}

The representation of a semialgebraic set $S$ as in \eqref{eq:defA} is not unique. However, a representation is useful to quantify the complexity of $S$, using the following notion.
\begin{definition}[Diagram of a semialgebraic set]\label{def:diagram}
Let $S\subset \R^n$ be a semialgebraic set represented as in \eqref{eq:defA}. We will say that the triple
\begin{equation}\label{eq:diagram}
\left(n, a \cdot \max_{i}\{b_i\}, \max_{i,j}\{\deg(p_{ij})\}\right)\in \N^3
\end{equation}
is a \emph{diagram} for $S$. Below, the equation ``$D(S)=(m, c, d)$'' will mean that there exists a representation of $S$ as in \eqref{eq:defA} with $n\leq m$, $ a\cdot\max_i\{{b_i}\}\leq c$ and $\max_{i,j}\{\deg(p_{ij})\}\leq d$.
\end{definition}
\begin{remark}
There are several  ways of quantifying the complexity of a semialgebraic set, depending on the way it is presented. Essentially, these notions should contain three pieces of information: the number of variables, the ``combinatorics'' of the presentation, and the degrees of the defining polynomials. For instance, every semialgebraic set $S\subset \R^n$ can be presented as
\begin{equation}S=\bigcup_{i=1}^{a}\bigcap_{j=1}^{b_i}E_{ij},
\end{equation}
where each $E_{ij}$ has the form $\{ x \in \R^n \mid p_{ij}(x) \leq 0\}$ or $\{ x \in \R^n \mid p_{ij}(x) < 0\}$,
where $p_{ij}$ is a polynomial with $\deg(p_{ij})\leq d$ for all $i,j$.
Given a presentation of this type, it is immediate to see that $S$ also admits a diagram $D(S)=(n, c, d)$ (i.e. it can be presented as in \eqref{eq:defA} with $c= c(a,b_1,\dots,b_a)$. 
\end{remark}


\subsubsection{Dimension and stratifications}\label{item1useful}
 Every semialgebraic subset of $\R^n$ can be written as a finite union of semialgebraic sets, each of them semialgebraically homeomorphic to an open cube $(0,1)^m\subset \R^m$, for some $m \leq n$ (\cite[Thm.\ 2.3.6]{BCR}). This allows to define the \emph{dimension} of a semialgebraic set as the maximum of the dimensions $m$ of these cubes. Every semialgebraic set can be written as a finite union of smooth, semialgebraic disjoint manifolds called \emph{strata} (\cite[Prop.\ 9.1.8]{BCR}). 

Notice that the dimension of a semialgebraic set is preserved by semialgebraic homeomorphisms and behaves naturally under product structures: 
\begin{equation}\label{eq:dimension}
\dim(A\times B)=\dim(A)+\dim(B).
\end{equation}
Moreover, if $f:A\to B$ is a continuous semialgebraic map, then
\begin{equation}\label{eq:dimension2}
\dim(f(A))\leq \dim(A).
\end{equation}

\subsubsection{Semialgebraic triviality}\label{item2useful}
Continuous semialgebraic maps $f:A\to B$ are ``piecewise'' trivial fibrations: there exists a partition of $B$ into finitely many semialgebraic sets
\begin{equation}
B=\bigsqcup_{j=1}^b B_j
\end{equation}
and, for every $j=1, \ldots, b$ there exist fibers $F_j:=f^{-1}(b_j)$, with $b_j\in B_j$ and a semialgebraic homeomorphism $\psi_j:f^{-1}(B_j)\to B_j\times F_j$ that make the following diagram commutative: 
\begin{equation}
\begin{tikzcd}
B_j\times F_j \arrow[rr, "\psi_j"] \arrow[rd, "p_1"'] &     & f^{-1}(B_j) \arrow[ld, "f"] \\
                                                      & B_j &                            
\end{tikzcd}
\end{equation}
This result is called \emph{semialgebraic triviality}, see \cite[Thm.\ 5.46]{BasuPoRoyBook}.

\subsubsection{Projections}\label{item:projectionsaugata}
The image of a semialgebraic set $S\subset \R^n\times \R^\ell$ under the projection map $\pi:\R^n\times \R^\ell\to \R^n$ is semialgebraic (\cite[Thm. 2.2.1]{BCR}). Moreover, it follows from \cite[Thm.\ 14.16, Notation 8.2]{BasuPoRoyBook}\footnote{Note that the role of $k$ and $\ell$ for us is swapped with respect to \cite[Thm.\ 14.16]{BasuPoRoyBook}: we eliminate $\ell$ variables from a list of $n+\ell$, they eliminate $k$ variables from a list of $k+\ell$.} that
 there exists $C_1>0$ such that  if $(n+\ell, c, d)$ is a diagram for $S$,  we can write
 \begin{equation}
\pi(S)=\bigcup_{i\in I}\bigcap_{j\in J_i}\bigcup_{k\in N_{ij}}\left\{x\in \R^n \mid \mathrm{sign}(p_{ijk}(x))=\sigma_{ijk}\right\},
\end{equation}
with 
\begin{equation}\label{eq:boundindexsets}
\#I\leq c^{(n+1)(\ell+1)}d^{C_1\ell n}, \quad\#J_i\leq c^{\ell+1}d^{C_1\ell},\quad \#N_{ij}\leq d^{C_1\ell},
\end{equation} 
and with
\begin{equation}\label{eq:bounddegree}
\deg(p_{ijk})\leq d^{C_1\ell}.
\end{equation}
In particular, there exists $C_2>0$ such $\pi(S)$ admits a diagram with:
\begin{equation}\label{eq:bounddiagramproj}
D(\pi(S))=\left(n, (cd)^{C_2\ell n}(d^{C_2\ell})^{(cd)^{C_2\ell}}, d^{C_2\ell}\right).
\end{equation}
In particular, the diagram of the projection of a semialgebraic set is controlled explicitly by the diagram of the original set.

\subsubsection{Thom-Milnor bound}\label{item:milnorbound}

We will need the following qualitative bound on the number of connected components of a semialgebraic set, that follows from \cite[Thm.\ 7.50]{BasuPoRoyBook}.

\begin{theorem}[Thom-Milnor]\label{thm:milnorbound}
Let $c,d\in \N$. There exists a constant $\beta_{\mathrm{tm}}=\beta_{\mathrm{tm}}(c,d)>1$, such that for any semialgebraic set $S\subset \R^n$ with diagram $D(S)=(n, c, d)$ it holds
\begin{equation}\label{eq:milnorbound}
b_0(S) \leq \beta_{\mathrm{tm}}^n,
\end{equation}
where $b_0(S)$ denotes the number of connected components of $S$.
\end{theorem}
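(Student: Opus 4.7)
The plan is to deduce the bound directly from the classical Oleinik-Petrovskii-Thom-Milnor-Basu estimate, in the form stated as \cite[Thm.\ 7.50]{BasuPoRoyBook}, and then merely verify that the dependence on $n$ is indeed of the required exponential shape once the ``diagram'' parameters $(c,d)$ are fixed.

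First I would unpack the diagram: by \cref{def:diagram}, any $S$ with $D(S)=(n,c,d)$ can be written as
\begin{equation*}
S = \bigcup_{i=1}^{a}\bigcap_{j=1}^{b_i}\{x\in\R^n \mid \mathrm{sign}(p_{ij}(x)) = \sigma_{ij}\},
\end{equation*}
with $a\cdot\max_i b_i \leq c$ and $\deg p_{ij}\leq d$. In particular the total number of distinct polynomials $p_{ij}$ appearing in this representation is at most $N := \sum_i b_i \leq c$. Hence $S$ is a $\mathcal{P}$-semialgebraic subset of $\R^n$ for some finite family $\mathcal{P}\subset\R[x_1,\dots,x_n]$ with $|\mathcal{P}|\leq c$ and $\max_{p\in\mathcal{P}}\deg p\leq d$. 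This is precisely the setting in which the Basu-Pollack-Roy bound applies.

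Next I would apply \cite[Thm.\ 7.50]{BasuPoRoyBook}, which gives an explicit estimate of the form
\begin{equation*}
b_0(S) \leq \sum_{i=0}^{n}\binom{N}{i}(O(d))^{i} \leq (N+1)^{n}(O(d))^{n} = \bigl(O(Nd)\bigr)^{n},
\end{equation*}
where $O(\,\cdot\,)$ denotes a universal absolute constant coming from the Oleinik-Petrovskii-Thom-Milnor estimate. Substituting $N\leq c$ yields $b_0(S)\leq (\gamma\, c d)^{n}$ for some absolute $\gamma>0$. Setting $\beta_{\mathrm{tm}}(c,d) := \max\{2,\gamma c d\}$ gives a constant $>1$ depending only on $(c,d)$ such that $b_0(S)\leq \beta_{\mathrm{tm}}^{n}$, which is the claim.

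There is essentially no technical obstacle here: the content lies entirely in the cited Basu-Pollack-Roy theorem, whose proof proceeds via Morse-theoretic perturbation (replacing the original $p_{ij}$'s by small algebraic perturbations to reduce to a smooth real variety) combined with Bezout-type intersection bounds. The only point to verify is the bookkeeping between our diagram parameters $(n,c,d)$ and the parameters $(k,s,d)$ of \cite[Thm.\ 7.50]{BasuPoRoyBook}, and this reduces to the trivial estimate $N\leq c$ observed above. Since in our applications $(c,d)$ will remain bounded while $n$ grows, absorbing every factor depending only on $c,d$ into the base of the exponential is harmless, and yields the advertised bound $b_0(S)\leq \beta_{\mathrm{tm}}(c,d)^{n}$.
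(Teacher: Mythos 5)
Your proposal is correct and follows exactly the paper's route: the paper states \cref{thm:milnorbound} as an immediate consequence of \cite[Thm.\ 7.50]{BasuPoRoyBook}, with precisely your bookkeeping ($N\leq c$ polynomials of degree $\leq d$ in $n$ variables, everything depending only on $(c,d)$ absorbed into the base $\beta_{\mathrm{tm}}$). One caveat on the intermediate formula: in the Basu--Pollack--Roy bound the degree factor is $(O(d))^{n}$, raised to the number of variables, not $(O(d))^{i}$ with $i$ the summation index --- as you quote it the sum would be bounded by $(1+O(d))^{N}$, independent of $n$, which is false (with a single degree-$4$ polynomial the set $\{\sum_{i=1}^n x_i^2(x_i-1)^2<\varepsilon\}$ already has $2^n$ components); this slip does not affect your final bound $(\gamma cd)^n$.
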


\subsection{Definable choice}

We recall from \cite{LRT-selection} the following \emph{quantitative approximate definable choice} result, which will be a key tool in the proof of \cref{thm:estimatevariations-intro}. 

Let us explain the name and the context of this result. Given a polynomial map $F:\R^n\to \R^\ell$ and a compact semialgebraic set $A\subset \R^n$, for every $y\in F(A)$ one would like to choose a point $x(y)\in F^{-1}(y)\cap A$ so that the resulting set $C:=\{x(y)\}_{y\in F(A)}$ is semialgebraic and \emph{with controlled dimension}. In particular, the ``choice'' of such $x(y)$ has to be ``definable''. In this way $C$ would be of dimension $\ell$. The fact that this can be done  follows from semialgebraic triviality (see \cref{item2useful}). However, in this way there is no control on the geometry of $C$  (in particular, the number of its connected components) in terms of the data defining $A$ (namely, its diagram, see \cref{def:diagram}).

For many geometric applications (as in this paper) one does not really need that $C$ is a choice over $A$, but it is enough that it is close to it. More precisely, given $\varepsilon>0$, denote by $\mathcal{U}_\varepsilon(A)$ the Euclidean $\varepsilon$--neighbourhood of $A$. Then one can relax the requirements for the definable choice and ask, given $\varepsilon>0$, for a set 	$C_\varepsilon\subset \mathcal{U}_\varepsilon(A)$, of dimension $\leq \ell$, whose image under $F$ is $\varepsilon$-close to $F(A)$ (in the Hausdorff metric, denoted by $\mathrm{dist}_H$), and with a control the geometry of $C_\varepsilon$.

\begin{theorem}[Quantitative approximate definable choice \cite{LRT-selection}]\label{thm:SemialgebraicSelection}
	For every $ c, d, \ell \in \N$ there exists  $\beta_{\mathrm{dc}}>1$ satisfying the following statement. Let $n\in \N$ and let $A\subset \R^n$ be a closed semialgebraic set contained in the ball $B_{\R^n}(r)$ with diagram
	\begin{equation}
	D(A)=(n,c, d).
	\end{equation}
	Let also $F=(F_1, \ldots, F_\ell):\R^n\to \R^\ell$ be a polynomial map with components of degree bounded by $d$. Then for every $\varepsilon \in (0 , r)$ there exists a closed semialgebraic set $C_\varepsilon\subset \R^{n}$ such that
	\begin{enumerate}[(i)]
		\item \label{item:thm1} $\dim(C_\varepsilon)\leq \ell$;
		\item  \label{item:thm2} $C_\varepsilon \subset \mathcal{U}_\varepsilon(A)$;
		\item  \label{item:thm3} $\mathrm{dist}_{H}(F(C_\varepsilon), F(A))\leq L(F, r) \cdot \varepsilon$, where
	$L(F, r):=2+\mathrm{Lip}(F, B_{\R^n}(2 r))$;
		\item  \label{item:thm4} for every $e=1, \ldots, n$ and every affine space $\R^e\simeq E\subset \R^n$, the number of connected components of $E\cap C_\varepsilon$ is bounded by

		\begin{equation}b_0(E\cap C_{\varepsilon})\leq \beta_{\mathrm{dc}}^e.\end{equation}
	\end{enumerate}
\end{theorem}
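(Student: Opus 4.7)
My plan is to construct $C_\varepsilon$ in two stages: first obtain a bona fide semialgebraic section $C_0 \subset A$ with $\dim C_0 \leq \ell$ and $F(C_0) = F(A)$ via classical qualitative definable choice, and then refine $C_0$ to a set $C_\varepsilon$ with the same geometric properties up to $\varepsilon$ but with a semialgebraic description whose ``complexity'' depends only on $c, d, \ell$, not on $n$. The main technical difficulty lies entirely in this second stage, since the classical selection procedures introduce complexity growing with the ambient dimension through the projection formula of \cref{item:projectionsaugata}.

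\textbf{Step 1: exact choice via trivialization.} Apply semialgebraic triviality (\cref{item2useful}) to the continuous semialgebraic map $F|_A: A \to F(A)$. This yields a finite partition $F(A) = \bigsqcup_{j=1}^b B_j$ and, for each $j$, a semialgebraic homeomorphism $\psi_j: F|_A^{-1}(B_j) \to B_j \times F_j$ commuting with $F$. Selecting a point $f_j^\star \in F_j$ and setting $C_0 := \bigcup_j \psi_j^{-1}(B_j \times \{f_j^\star\})$ produces a closed semialgebraic subset of $A$ with $\dim C_0 = \dim F(A) \leq \ell$ and $F(C_0) = F(A)$, so (i)--(iii) hold with $\varepsilon = 0$.

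\textbf{Step 2: bounded-complexity approximation.} To achieve (iv), I would replace $C_0$ by an $\varepsilon$-perturbation $C_\varepsilon$ realised as a union of explicit low-degree polynomial slices of $A$. Specifically, cover $F(A) \subset B_{\R^\ell}(L(F,r)\,r)$ with an $\varepsilon$-grid of cubes $\{Q_\alpha\}$, and for each $Q_\alpha$ meeting $F(A)$ choose a representative $y_\alpha \in Q_\alpha \cap F(A)$, and include in $C_\varepsilon$ the intersection of $A$ with $\{F = y_\alpha\}$ together with $(n-\ell)$ generic affine-linear equations that cut each such fiber down to finitely many points. The resulting $C_\varepsilon$ is closed semialgebraic with $\dim C_\varepsilon \leq \ell$, and the Lipschitz bound $L(F,r) = 2 + \Lip(F, B_{\R^n}(2r))$ yields (ii) and (iii).

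\textbf{Main obstacle.} The key difficulty is the $n$-independent bound (iv). Naively, $C_\varepsilon$ is a union of $O_\ell((r/\varepsilon)^\ell)$ pieces, so an affine subspace $E \simeq \R^e$ could a priori meet unboundedly many of them. The resolution should exploit that, for a fixed $E$, only the $y_\alpha$ lying in $F(E \cap A)$ contribute, and the latter set is semialgebraic of complexity controlled purely in terms of $c, d, \ell$ (through Thom-Milnor, \cref{thm:milnorbound}, applied to $E \cap A$, whose diagram lives in $\R^e$ with parameters inherited from those of $A$). Summing the component counts over these boundedly-many contributing pieces, each a common zero locus of $O(1)$ polynomials of degree $\leq d^{O(\ell)}$, should yield $b_0(E \cap C_\varepsilon) \leq \beta_{\mathrm{dc}}^e$ with $\beta_{\mathrm{dc}}$ depending only on $c, d, \ell$. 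The careful execution of this reduction—in particular, avoiding the $n$-dependent projection blow-up of \eqref{eq:bounddiagramproj} when tracking complexity—is the technical heart of the construction carried out in \cite{LRT-selection}.
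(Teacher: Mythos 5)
First, a point of comparison: the paper does not prove this theorem here — it is imported verbatim from the companion paper \cite{LRT-selection} — so your proposal can only be judged on its own merits, not against an in-text argument. Your Step 1 is exactly the classical qualitative definable choice via semialgebraic triviality that the paper itself describes (and dismisses) in the paragraph preceding the statement; it is correct but, as you acknowledge, gives no control on complexity. The problem is that your Step 2, which is supposed to supply that control, contains a genuine gap at precisely the point where the theorem is hard.

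The gap is in your treatment of (iv). Your $C_\varepsilon$ is a union of $N\sim (r/\varepsilon)^{\ell}$ pieces, one for each grid point $y_\alpha$, each piece being a finite (or at most $0$-dimensional) slice of the fiber $A\cap F^{-1}(y_\alpha)$. Such a set has at least one connected component per nonempty piece. Now take $E=\R^n$ itself (the case $e=n$ is allowed), or any affine $E$ for which $F(E\cap A)$ is $\ell$-dimensional: the number of grid points $y_\alpha$ contained in a fixed $\ell$-dimensional semialgebraic subset of $\R^\ell$ is of order $\varepsilon^{-\ell}$, so $b_0(E\cap C_\varepsilon)\gtrsim \varepsilon^{-\ell}\to\infty$, while the required bound $\beta_{\mathrm{dc}}^{\,e}$ is uniform in $\varepsilon$ (and this uniformity is essential where the theorem is applied, in the proof of \cref{BehaviourVariations1}, where one lets $\varepsilon\to 0$). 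Your proposed resolution confuses two different quantities: \cref{thm:milnorbound} bounds the number of \emph{connected components} of the semialgebraic set $F(E\cap A)$, not the number of \emph{grid points} it contains, and it is the latter that counts the pieces of $C_\varepsilon$ met by $E$. No discrete sampling of fibers can satisfy (iv); the content of the theorem is precisely to produce an honest $\ell$-dimensional set, cut out near $A$ by boundedly many polynomials of degree controlled by $c,d,\ell$ alone, so that $E\cap C_\varepsilon$ is itself a semialgebraic set in $E\simeq\R^{e}$ with diagram independent of $n$ and $\varepsilon$, to which a Thom--Milnor-type bound applies directly. This construction (carried out in \cite{LRT-selection}) is what your proposal defers to but does not supply.
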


\subsection{Variations and their behaviour under polynomial maps}
We recall the definition of \textit{variations}, introduced by Vitushkin  \cite{Vit1, Vitbook} and developed in the semialgebraic context by Comte and Yomdin \cite{ComteYomdin}.
The variations encode the size of a set, through an integral-geometric approach.
Given a polynomial map $p:\R^n \to \R^m$ and a semialgebraic set $A \subset \R^n$, it is possible to estimate the variations of the set $p(A)$, in a quantitative way, see
\cite[Thm.\ 7.2]{ComteYomdin}.
However, these estimates are not quantitative with respect to the dimensional parameter $n$.
The main result of this section is \cref{BehaviourVariations1}, where we prove a version of \cite[Thm.\ 7.2]{ComteYomdin}, where the dependence on $n$ is explicit, thanks to the use of \cref{thm:SemialgebraicSelection}.


For $i\leq n$, we denote by $G_i(\R^n )$ the Grassmannian of all $i$-dimensional linear subspaces of $\R^n$. 
We endow it with the standard probability measure $\gamma_{ i , n}$ defined through the action of the orthogonal group of $\R^n$. For any $Z \in G_i(\R^n )$ we the denote the orthogonal projection on $Z$ by 
\begin{equation}
\pi_Z : \R^n \to Z.
\end{equation}

\begin{definition}[Vitushkin variations]\label{defvariation}
Let $A\subset \R^n$ be a bounded semialgebraic set. We define the $0$--th \emph{variation} of $A$ as
\begin{equation}
V_0(A):=b_0(A),
\end{equation}
where $b_0(A)$ denotes the number of connected components of $A$. For $i=1, \ldots, n$ we define the $i$--th \emph{variation} of $A$ as
\begin{equation}\label{eq:defvariation}
	V_i(A) 
	:=
	c(n,i)
	\int_{G_i (\R^n)} 
	\left(
		\int_Z
		b_0 ( A \cap \pi_Z^{-1}(x) )
		\,\mathcal{H}^i(dx)
				\right)
	\,\gamma_{i,n}(dZ) ,
\end{equation}
where $\mathcal{H}^i$ denotes the $i$-dimensional Hausdorff measure.
\begin{equation}\label{exactvalueconstant}
	c(n,i)
	:=
	\frac{\Gamma(\frac{1}{2})\Gamma(\frac{n + 1}{2})}{\Gamma(\frac{i+1}{2})
		\Gamma(\frac{n-i+1}{2})}.
\end{equation}
We note that the integrand in \eqref{eq:defvariation} is measurable, see \cite[Sec.\ 3]{ComteYomdin}.
\end{definition} 

A list of properties of variations is given in  \cite[page 35]{ComteYomdin}. Here we prove the following one.

\begin{lemma}\label{VolumeAndVariationsSemialg}
	If $A \subset \R^n$ is a bounded semialgebraic set of dimension $\ell$, then the $\ell$-variation coincides with the $\ell$-dimensional Hausdorff volume:
	\begin{equation}
		V_\ell (A)= 	\mathcal{H}^\ell (A)  .
	\end{equation}
We also have $V_i(A ) = 0  \text{ for all  } i > \ell$.
\end{lemma}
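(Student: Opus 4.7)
The strategy is to reduce to the classical Cauchy--Crofton formula by exploiting the semialgebraic stratification of $A$.

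First I would invoke the semialgebraic stratification theorem recalled in \cref{item1useful} to decompose
\begin{equation}
A = \bigsqcup_{j} S_j,
\end{equation}
where each $S_j$ is a smooth semialgebraic submanifold of $\R^n$ of dimension $d_j \leq \ell$, with $d_j = \ell$ for at least one index and such that $\mathcal{H}^\ell \big( \bigsqcup_{d_j<\ell} S_j \big) = 0$. Since $A$ is bounded and semialgebraic, the decomposition is finite.

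\textbf{The case $i > \ell$.} For every $Z \in G_i(\R^n)$ the projection $\pi_Z$ is $1$-Lipschitz, so for each stratum $S_j$ one has $\dim \pi_Z(S_j) \leq d_j \leq \ell < i = \dim Z$. In particular $\mathcal{H}^i(\pi_Z(S_j)) = 0$, and for $\mathcal{H}^i$-a.e.\ $x\in Z$ the fiber $S_j \cap \pi_Z^{-1}(x)$ is empty. Summing over the finitely many strata, $b_0(A \cap \pi_Z^{-1}(x)) = 0$ for $\mathcal{H}^i$-a.e.\ $x$, for every $Z$. Plugging this into \eqref{eq:defvariation} gives $V_i(A) = 0$.

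\textbf{The case $i = \ell$.} The strata with $d_j<\ell$ again contribute nothing to the integral, by the same dimensional argument applied with $i$ replaced by $\ell$. For the top-dimensional strata $S_j$ with $d_j=\ell$, I would use Sard's theorem (in its finite-dimensional form) applied to the restrictions $\pi_Z|_{S_j}$: for $\gamma_{\ell,n}$-a.e.\ $Z \in G_\ell(\R^n)$, and then for $\mathcal{H}^\ell$-a.e.\ $x \in Z$, the fiber $S_j \cap \pi_Z^{-1}(x)$ is a finite set of points at which $\pi_Z|_{S_j}$ is a local diffeomorphism. Each such point is its own connected component in $A \cap \pi_Z^{-1}(x)$ (since the lower strata contribute nothing generically), hence
\begin{equation}
b_0\bigl(A \cap \pi_Z^{-1}(x)\bigr) = \sum_j \#\bigl(S_j \cap \pi_Z^{-1}(x)\bigr) \qquad \text{for a.e.\ }(Z,x).
\end{equation}
Inserting this into the definition of $V_\ell$ and applying the classical Cauchy--Crofton formula to each smooth manifold $S_j$ (which is precisely the reason the normalizing constant $c(n,\ell)$ in \eqref{exactvalueconstant} is chosen as it is) yields
\begin{equation}
V_\ell(A) = \sum_j \mathcal{H}^\ell(S_j) = \mathcal{H}^\ell(A),
\end{equation}
where the last equality uses that lower-dimensional strata have zero $\mathcal{H}^\ell$-measure.

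The main technical point is the generic-fiber statement in the case $i=\ell$: one has to show that for almost every pair $(Z,x)$ the intersection $A \cap \pi_Z^{-1}(x)$ is a finite set whose cardinality equals its number of connected components. This is where semialgebraicity and Sard for each stratum are essential; without them one would have to appeal to more delicate rectifiability arguments as in Federer.
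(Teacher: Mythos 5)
Your proposal is correct and follows essentially the same route as the paper: stratify $A$, show that for generic fibers the lower-dimensional strata are missed and the top-dimensional part meets $\pi_Z^{-1}(x)$ in a finite set (so $b_0$ equals cardinality), and then invoke the Cauchy--Crofton formula. The only cosmetic differences are that you dispose of the lower strata by a Lipschitz/dimension count where the paper cites semialgebraic Sard, and you apply Crofton stratum by stratum rather than to $A$ at once; both variants are fine.
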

\begin{proof}
We recall a classical result in Integral Geometry, called Cauchy--Crofton formula \cite[Thm.\ 5.14]{FedererTransAmerMathSoc}: if $A$ is a $i$-rectifiable and Borel subset of $\R^n$, for $i \leq n$, then it holds
\begin{equation}\label{eq:federer}
	\mathcal{H}^i (A)
	=
	c(n,i)
	\int_{G_i (\R^n)} 
	\left(
	\int_Z
	 \# ( A \cap \pi_Z^{-1}(x) )
	\,\mathcal{H}^i(dx)
	\right)
	\,\gamma_{i,n}(dZ).
\end{equation}
If $A\subset \R^n$ is semialgebraic, the set $A$ can be written as a finite
union of smooth, semialgebraic disjoint manifolds called strata (see \cref{item1useful}). We can relabel the strata  so that
	 \begin{equation}
	 A = \bigsqcup_{j=0}^\ell A_j,
	 \end{equation}
	  where each $A_j$ is a finite union of smooth $j$-dimensional manifolds.
	For any fixed $Z \in G_\ell (\R^N)$ and for almost every $x \in Z$
	the set $\pi_Z^{-1}(x)$ does not intersect\footnote{We note that in this step of the proof it is enough to use the Semialgebraic Sard's theorem \cite[Thm.\ 9.6.2]{BCR}, which is logically independent from the result in the smooth category.} $A_j $ for $j <\ell$.
	Hence, 
	$V_0 (A \cap \pi_Z^{-1}(x)) = 
	V_0 (A_\ell \cap \pi_Z^{-1}(x))$ 
	for almost every $ x \in Z$.
	Furthermore, for almost every $x \in Z$ the intersection
	$A_\ell \cap \pi_Z^{-1}(x)$ is transverse, and $\dim A + \dim \pi_Z^{-1}(x) =n$, so that $A_\ell \cap \pi_Z^{-1}(x)$ has dimension zero. Thus, its cardinality coincides with its number of connected components. We conclude by \eqref{eq:federer}.
\end{proof}

Let $M$ be a real $m \times m$ symmetric and positive semidefinite matrix. We denote by 
\begin{equation}
\lambda_1(M) \geq \dots \geq \lambda_m(M)\geq 0,
\end{equation}
 its eigenvalues, ordered in decreasing order. 
\begin{definition}[Singular values]\label{def:singval}
	Let $H$ be a Hilbert space with
	$\dim H \geq m$, and let
	$L:H \to \mathbb{R}^m$ 
	be a linear and continuous map. For any $k=1,\dots,m$ the $k$-th \emph{singular value} of $L$ is
	\begin{equation}
		\sigma_k(L)
		= 
		\lambda_k(LL^{\top})^{\frac{1}{2}}  .
	\end{equation}
\end{definition}
In finite dimension, the following Weyl inequality holds  for the singular values of the difference of two matrices (see \cite[Ex.\ 1.3.22 (iv)]{TaoRMT2012}): for $n \geq m$ and linear maps $L_1, L_2: \mathbb{R}^n \to \mathbb{R}^m$ we have
\begin{equation}\label{WeylFiniteDim}
	|\sigma_k( L_1 ) - \sigma_k( L_2 )| 
	\leq   
	\| L_1 - L_2 \|_{\op} ,\qquad \forall\, k=1, \dots , m.
\end{equation}
The Weyl inequality generalizes to infinite-dimensional spaces. Since we are not able to find a reference, we provide below a self-contained statement and its proof in the form that we will need later.
\begin{lemma} \label{SingolarValuesLipschitz}
	If $L_1, L_2 \in \mathcal{L}(H , \mathbb{R}^m)$, then 
	$|\sigma_k(L_1) - \sigma_k(L_2)| \leq \|L_1-L_2\|_{\op}$ for all 
	$k= 1 , \dots , m$.
\end{lemma}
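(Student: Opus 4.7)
The strategy is a clean reduction to the finite-dimensional Weyl inequality \eqref{WeylFiniteDim}. The key observation is that although the domain $H$ may be infinite-dimensional, each $L_i$ has finite-dimensional range (of dimension at most $m$), so the ``effective'' domain is finite-dimensional.

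First I would observe that since $L_i : H \to \R^m$ is continuous with range of dimension at most $m$, its adjoint $L_i^\top : \R^m \to H$ also has range of dimension at most $m$. Because $(\ker L_i)^\perp = \overline{\operatorname{im}(L_i^\top)} = \operatorname{im}(L_i^\top)$ (finite-dimensional subspaces are closed), the space $W_i := (\ker L_i)^\perp$ is finite-dimensional with $\dim W_i \leq m$. I would then set
\begin{equation}
W := W_1 + W_2 \subset H, \qquad \dim W \leq 2m.
\end{equation}

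Next I would check that restricting to $W$ preserves both the operator norm and the singular values. For $u \in W^\perp$, orthogonality to $(\ker L_i)^\perp$ together with closedness of $\ker L_i$ gives $u \in \ker L_i$, so $L_i u = 0$ for $i=1,2$. Consequently, writing any $u \in H$ as $u = \pi_W(u) + u'$ with $u' \in W^\perp$, we have $L_i u = L_i(\pi_W u)$, whence $\|L_i\|_{\op} = \|L_i|_W\|_{\op}$ and, by the same token, $\|L_1-L_2\|_{\op} = \|(L_1-L_2)|_W\|_{\op}$. For the singular values, note that the adjoint $(L_i|_W)^* : \R^m \to W$ is exactly $L_i^\top$ (viewed with codomain $W$, which is legitimate since $\operatorname{im} L_i^\top \subset W_i \subset W$). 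Hence
\begin{equation}
L_i L_i^\top = (L_i|_W)(L_i|_W)^\top \quad \text{as operators on } \R^m,
\end{equation}
so $\sigma_k(L_i) = \sigma_k(L_i|_W)$ for every $k = 1,\dots,m$ by \cref{def:singval}.

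Finally I would apply the finite-dimensional Weyl inequality \eqref{WeylFiniteDim} to the restrictions $L_1|_W, L_2|_W : W \to \R^m$, which is legal because $\dim W \leq 2m < \infty$ and $\dim W \geq m$ can be arranged by enlarging $W$ to any $2m$-dimensional subspace containing it (this does not alter the above identities, since $L_i$ still vanishes on the orthogonal complement). This yields
\begin{equation}
|\sigma_k(L_1) - \sigma_k(L_2)| = |\sigma_k(L_1|_W) - \sigma_k(L_2|_W)| \leq \|(L_1-L_2)|_W\|_{\op} = \|L_1-L_2\|_{\op},
\end{equation}
which is the claim. There is no genuine obstacle here; the only thing to be careful about is the identification of the adjoint of $L_i|_W$ with (the corestriction of) $L_i^\top$, which is what makes the singular values transport cleanly from $H$ to $W$.
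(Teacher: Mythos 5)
Your proposal is correct and follows essentially the same route as the paper: restrict both operators to a finite-dimensional subspace $W$ containing $(\ker L_1)^\perp + (\ker L_2)^\perp$, observe that $L_iL_i^\top = (L_i|_W)(L_i|_W)^\top$ so singular values are preserved and the operator norm of the difference does not increase, then invoke the finite-dimensional Weyl inequality \eqref{WeylFiniteDim}. Your explicit remarks that $\dim W \le 2m$ and that $W$ may be enlarged to make the finite-dimensional statement applicable are harmless refinements of the same argument.
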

\begin{proof}
	Let $L:H \to \R^m$ be linear and continuous. Let $W\subset H$ a finite dimensional subspace such that $(\ker L)^\perp \subseteq W$, and denote by $L_W: W \to \mathbb{R}^m$ the restriction of $L$ to $W$. More precisely, $L_W=L \circ i_W$ where $i_W : W\hookrightarrow H$ is the inclusion.
	We now prove that for all $k \leq m$
	\begin{equation}\label{InfDimWeyl_1}
		\sigma_k(L) = \sigma_k(L_W)  .
	\end{equation}
	Indeed, since the adjoint $i_W^\top $ coincides with the orthogonal projection 
	$ \pi_W: H \to W \subset H$, we have
	\begin{equation} 
	\lambda_k(L_W \circ L_W^\top) 
	=
	\lambda_k(L \circ i_W \circ i_W^\top \circ L^\top)  
	=
	\lambda_k(L \circ \pi_W \circ L^\top)  = \lambda_k(L\circ L^\top).
	\end{equation}
In the last equality, we used the fact that
	$L \circ \pi_W  = L$, 
	indeed
	$L(v) 
	= 
	L(\pi_W v + \pi_{W^\perp} v)
	=
	L(\pi_W v)
	$
	since $W^\perp \subseteq \ker L$.
	Now given $L_1,L_2 \in \mathcal{L}(H,\R^m)$ let $W:=(\ker L_1)^\perp + (\ker L_2) ^\perp$, which is a finite dimensional linear subspace of $H$.
	By \eqref{InfDimWeyl_1} and the finite dimensional Weyl inequality, we have 
	\begin{equation}
		|\sigma_k(L_1) - \sigma_k(L_2)|
		=
		|\sigma_k((L_1)_W) - \sigma_k((L_2)_W)|
		\leq 
		\|(L_1)_W - (L_2)_W\|_{\op}
		\leq
		\|L_1-L_2\|_{\op},
	\end{equation}
	concluding the proof.
\end{proof}

In the next result, given a semialgebraic set $A\subset \R^n$ and a polynomial map $p:\R^n\to \R^m$, we bound the $i$-th variation $V_i(p(A))$ in terms of the singular values of $p$ and the diagram of $A$. Its proof follows the blueprint of \cite[Thm.\ 7.2]{ComteYomdin}. The main novelty (which is key for our work) is that the dependence on the dimension $n$ is explicit.

	\begin{theorem} \label{BehaviourVariations1}
	For every $c,d,m\in \N$ there exists 
	$\beta_{\mathrm{yc}}=\beta_{\mathrm{yc}}(c,d,m) > 1$ with the following property.
	For $n\geq m$, let $A \subset \R^n$ be a closed and bounded semialgebraic set with $A\subset B_{\R^n}(r)$ and diagram $D(A)=(n,c,d)$. Let
	$p: \R^n \to \R^m$ be a polynomial map with components of degrees at most $d$. For all $i = 1 , \dots , m$ set
	\begin{equation}\label{eq:prodsigmaass}
	\overline{\sigma}_i:=\sup_{x \in A} \Big({\sigma}_1(D_xp)\cdots{\sigma}_i(D_xp) \Big), \qquad \overline{\sigma}_0:=1.
	\end{equation}
	Then, for all $i=0,\dots,m$ it holds
	\begin{equation}
		V_i(p(A))
		\leq  \mathrm{cst}(m,r) n^m \beta_{\mathrm{yc}}^n  \overline{\sigma}_i
	\end{equation}
	where $\mathrm{cst}(m,r)>0$ is a constant that depends only on $m,r$.
	\end{theorem}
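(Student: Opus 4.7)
My plan is to adapt the Comte-Yomdin argument of \cite[Thm.\ 7.2]{ComteYomdin}, replacing their use of exact semialgebraic triviality with the quantitative approximate definable choice provided by Theorem \ref{thm:SemialgebraicSelection}, so as to track the dependence of all constants on the dimensional parameter $n$ explicitly. Fix $Z \in G_i(\R^m)$ and set $F := \pi_Z \circ p : \R^n \to Z \cong \R^i$, a polynomial map of degree at most $d$. Since $p(A) \cap \pi_Z^{-1}(x) = p(A \cap F^{-1}(x))$, a first reduction gives
\[
b_0\bigl(p(A) \cap \pi_Z^{-1}(x)\bigr) \leq b_0\bigl(A \cap F^{-1}(x)\bigr),
\]
reducing the problem to controlling the fibrewise components of $F|_A$, then integrating over $x \in Z$ and averaging over $Z \in G_i(\R^m)$.

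Next I would apply Theorem \ref{thm:SemialgebraicSelection} to the pair $(A, F)$ with $\ell = i$, producing a closed semialgebraic set $C_\varepsilon \subset \mathcal{U}_\varepsilon(A)$ of dimension at most $i$, with $\mathrm{dist}_H(F(C_\varepsilon), F(A)) \leq L(F, r)\varepsilon$ and with $b_0(E \cap C_\varepsilon) \leq \beta_{\mathrm{dc}}^e$ for every affine subspace $E \subset \R^n$ of dimension $e$. I would interpret $C_\varepsilon$ as a quantitative approximate \emph{selection} over $A$: combining the Hausdorff-closeness of images with standard semicontinuity arguments (via semialgebraic triviality on the regular stratum of $F|_A$), one obtains in the limit $\varepsilon \to 0^+$ the inequality
\[
\int_Z b_0(A \cap F^{-1}(x)) \, d\mathcal{H}^i(x) \leq \limsup_{\varepsilon \to 0^+} \int_Z \#\bigl(C_\varepsilon \cap F^{-1}(x)\bigr) \, d\mathcal{H}^i(x) = \limsup_{\varepsilon \to 0^+} \int_{C_\varepsilon} J_i F \, d\mathcal{H}^i,
\]
where the last equality is the area formula applied to $F|_{C_\varepsilon} : C_\varepsilon \to \R^i$ (whose source has dimension at most $i$). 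Since $D_x F = \pi_Z \circ D_x p$ and $\|\pi_Z\|_{\mathrm{op}} = 1$, Lemma \ref{SingolarValuesLipschitz} combined with the multiplicativity of singular values gives $J_i F \leq \prod_{k=1}^i \sigma_k(D_x p) \leq \overline{\sigma}_i$, so the right-hand side is bounded by $\overline{\sigma}_i \cdot \mathcal{H}^i(C_\varepsilon)$.

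It then remains to estimate $\mathcal{H}^i(C_\varepsilon)$ uniformly in $\varepsilon$. I apply Lemma \ref{VolumeAndVariationsSemialg} to identify $\mathcal{H}^i(C_\varepsilon) = V_i(C_\varepsilon)$ and expand via the Cauchy-Crofton formula \eqref{eq:defvariation}: for $W \in G_i(\R^n)$ and $x \in W$, the fibre $\pi_W^{-1}(x)$ is an affine subspace of dimension $n - i$, so property (iv) of Theorem \ref{thm:SemialgebraicSelection} yields $b_0(C_\varepsilon \cap \pi_W^{-1}(x)) \leq \beta_{\mathrm{dc}}^{n-i}$. Since $\pi_W(C_\varepsilon) \subset B_W(r + \varepsilon)$, the inner integral is bounded by $\beta_{\mathrm{dc}}^{n-i} \cdot \mathrm{cst}(i, r)$, producing $V_i(C_\varepsilon) \leq c(n, i) \cdot \beta_{\mathrm{dc}}^n \cdot \mathrm{cst}(i, r)$. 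Using the explicit expression \eqref{exactvalueconstant} and Stirling's asymptotics, $c(n, i) = O(n^{i/2}) = O(n^m)$, so combining with the outer integration over $G_i(\R^m)$ (a probability measure, contributing the factor $c(m, i)$) produces the desired estimate with $\beta_{\mathrm{yc}} := \beta_{\mathrm{dc}}$ and $\mathrm{cst}(m, r)$ absorbing all constants depending only on $m$ and $r$. The main obstacle will be making the approximate selection step rigorous, namely verifying that $C_\varepsilon$ ``sees'' every connected component of almost every fibre $A \cap F^{-1}(x)$; this requires combining Theorem \ref{thm:SemialgebraicSelection} with a careful limiting argument exploiting semialgebraic triviality on the regular locus of $F$.
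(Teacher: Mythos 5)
Your overall architecture matches the paper's — Crofton formula, the approximate definable choice set $C_\varepsilon$, the area formula on $C_\varepsilon$, and the slice bound $b_0(C_\varepsilon\cap\pi_W^{-1}(x))\leq\beta_{\mathrm{dc}}^{n-i}$ to control $\mathcal{H}^i(C_\varepsilon)$ — but there is a genuine gap at the first inequality of your chain, and it is exactly the point you flag as ``the main obstacle''. The inequality
\begin{equation}
\int_Z b_0\bigl(A\cap F^{-1}(x)\bigr)\,\mathcal{H}^i(dx)\;\leq\;\limsup_{\varepsilon\to 0^+}\int_Z \#\bigl(C_\varepsilon\cap F^{-1}(x)\bigr)\,\mathcal{H}^i(dx)
\end{equation}
is not a consequence of \cref{thm:SemialgebraicSelection} and cannot be repaired by semialgebraic triviality. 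That theorem only guarantees that $F(C_\varepsilon)$ is Hausdorff-close to $F(A)$: $C_\varepsilon$ is an approximate \emph{section}, choosing essentially one point of $\mathcal{U}_\varepsilon(A)$ per value $y\in F(A)$, and nothing forces it to meet a neighbourhood of every connected component of every fibre $A\cap F^{-1}(x)$. If the fibres of $F|_A$ have many components, the left-hand side is large while the right-hand side can be as small as roughly $\mathcal{H}^i(F(A))$. Trying to establish the ``sees every component'' property via semialgebraic triviality reintroduces precisely the uncontrolled, $n$-dependent combinatorics that the theorem is designed to avoid.

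The correct move, and the one the paper makes, is to decouple multiplicity from measure. The fibre $A\cap(\pi_Z\circ p)^{-1}(x)$ is semialgebraic with a diagram controlled in terms of $c,d,m$ only (the dependence on $n$ entering only through the number of variables), so the Thom--Milnor bound (\cref{thm:milnorbound}) gives $b_0(A\cap F^{-1}(x))\leq\beta_{\mathrm{tm}}^n$ uniformly in $x$ and $Z$. One then writes $b_0(p(A)\cap\pi_Z^{-1}(x))\leq\beta_{\mathrm{tm}}^n\,\mathbbm{1}_{p_Z(A)}(x)$ and only needs to bound $\mathcal{H}^i(p_Z(A))$ — and for \emph{that}, the Hausdorff-closeness of $F(C_\varepsilon)$ to $F(A)$ in item \eqref{item:thm3} of \cref{thm:SemialgebraicSelection}, together with the continuity of the measure in the semialgebraic Hausdorff topology, is exactly enough. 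The rest of your argument (the area formula with $J_iF\leq\prod_{k\leq i}\sigma_k(D_xp)$, bounded by $\overline{\sigma}_i$ only up to an $\eta(\varepsilon)$ coming from the continuity of singular values on $\mathcal{U}_\varepsilon(A)$, and the direct Crofton estimate of $\mathcal{H}^i(C_\varepsilon)$ via $\beta_{\mathrm{dc}}^{n-i}$ and $\pi_W(C_\varepsilon)\subset B_W(r+\varepsilon)$) then goes through. The final constant becomes $\beta_{\mathrm{yc}}=\beta_{\mathrm{tm}}\beta_{\mathrm{dc}}$ rather than $\beta_{\mathrm{dc}}$ alone (the paper gets $\beta_{\mathrm{tm}}^2\beta_{\mathrm{dc}}$ along a slightly different route through $V_i(A)$), and the case $i=0$ is handled directly by the same Thom--Milnor bound.
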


	\begin{remark}[Origin of $\beta_{\mathrm{yc}}$]\label{rmk:origin}
An inspection of the proof shows that it holds $\beta_{\mathrm{yc}}= \beta_{\mathrm{tm}}^2 \beta_{\mathrm{dc}}$ where $\beta_{\mathrm{tm}} = \beta_{\mathrm{tm}}(c(m+1),d)$ is the constant from the Thom-Milnor-type bound of \cref{thm:milnorbound}, and $\beta_{\mathrm{dc}} = \max_{i\leq m} \beta_{\mathrm{dc}}(c,d,i)$ is the definable choice constant from \cref{thm:SemialgebraicSelection}.
	\end{remark}
	\begin{remark}\label{rmk:comteyomdinimproved}
In the course of the proof, see \eqref{eq:estVipA}, we obtain the following estimate, reminiscent of an area formula for variations, which is an improved version of \cite[Thm.\ 7.2]{ComteYomdin}, with the explicit dependence of the constants on $n$: there exist a constant $\beta_{\mathrm{yc}} = \beta_{\mathrm{yc}}(c,d,m)$, such that
\begin{equation}
	V_i(p(A)) 
	\leq 
	c(m,i) \beta_{\mathrm{yc}}^n \overline{\sigma}_i
	V_i(A), \qquad i=1,\dots,m,
\end{equation}
where $c(m,i)$ is the constant in \eqref{exactvalueconstant}.
	\end{remark}

\begin{proof}
By \cref{defvariation}, $V_0 = b_0$. Furthermore, $b_0(p(A))\leq b_0(A)$. Thus, \cref{thm:milnorbound} yields the thesis of the theorem for the case $i=0$. By \cref{defvariation}, for $i=1,\dots,m$, we have
	\begin{equation}
		V_i(p(A))
		=
		c(m,i)
		\int_{G_i(\R^m)}\left(
		\int_{Z}
		b_0
		( p(A) \cap \pi^{-1}_Z (x) )
		\,	\mathcal{H}^i(dx) \right) \gamma_{i,m}\left(dZ\right)   .
	\end{equation}
	For any $x\in Z$, the set $A\cap (\pi_Z\circ p)^{-1}(x)$ is semialgebraic with controlled diagram (more precisely, $D(A\cap (\pi_Z\circ p)^{-1}(x)) = (n,c(1+m),d)$. Using \cref{thm:milnorbound} we obtain
	\begin{equation}\label{eq:b0betatm}
			b_0
		( p(A) \cap \pi^{-1}_Z (x) )
		\leq
		b_0
		( A \cap (\pi_Z\circ p)^{-1}(x) ))
		\leq \beta_{\mathrm{tm}}^n,
	\end{equation}
	where $\beta_{\mathrm{tm}}=\beta_{\mathrm{tm}}(c(1+m),d)$ depends only on $c,d,m$.
	 Denoting by 
	$\mathbbm{1}_{S}$ the characteristic function of a set $S$,
	we get
	\begin{align}
		V_i(p(A)) 
		& \leq 
		c(m,i)
		\beta_{\mathrm{tm}}^n
		\int_{G_i(\R^m)}
		\left(\int_Z
		\mathbbm{1}_{ \pi_Z \circ p  (A) } (x)
		\, \mathcal{H}^i(dx)\right)
		\, \gamma_{i,m}(dZ) \\
		& =
		c(m,i)
		\beta_{\mathrm{tm}}^n
		\int_{G_i(\R^m)}
		\mathcal{H}^i (\pi_Z \circ p (A))
		\, \gamma_{i,m}(dZ).\label{IGAF_0}
	\end{align}
We would like to estimate $\mathcal{H}^i(\pi_Z \circ p (A) ) $ applying area formula to the map 
	\begin{equation}
	p_Z:=\pi_Z \circ p:\R^n\to \R^i.
	\end{equation}
	 Exactly as in \cite[Thm.\ 7.2]{ComteYomdin}, we cannot apply directly the area formula to  $A$, since the dimension of $A$ can be larger than $i$.
	In \cite[Thm.\ 7.2]{ComteYomdin} this problem is solved using \cite[Ex.\ 4.11]{ComteYomdin}. However, the dependence on $n$ of the constants appearing there is not explicit. It could in be principle be made so, using the Tarski-Seidenberg theorem, but the dependence obtained in this way would be doubly exponential in $n$, which is not enough for our purposes. We overcome the obstacle using \cref{thm:SemialgebraicSelection}.

For sufficiently small $\varepsilon>0$ let $C_{\varepsilon}$ the semialgebraic set obtained by applying \cref{thm:SemialgebraicSelection} to the set $A$ and the polynomial map $F= p_Z : \R^n \to \R^i$. We then proceed using the area formula to estimate $\mathcal{H}^i(\pi_Z \circ p(C_{\varepsilon}))$. (We will see at the end of the proof how $\mathcal{H}^i(\pi_Z \circ p(C_{\varepsilon})) \to \mathcal{H}^i(\pi_Z \circ p(A))$ as $\varepsilon\to 0$). By \cref{item:thm1} of \cref{thm:SemialgebraicSelection}, $\dim(C_\varepsilon)\leq i$. We can assume without loss of generality that $C_\varepsilon\subset \R^n$ is a smooth embedded submanifold with dimension equal to $i$, since the other strata give zero contribution to $\mathcal{H}^i$. Denoting by $\bar{p}_Z : C_\varepsilon \to \R^i$ the restriction, we obtain
\begin{align}
\mathcal{H}^i(p_Z(C_\varepsilon)) & \leq \int_{C_\varepsilon} |\det(D_x \bar{p}_Z )|\mathcal{H}^i(dx) & \text{by the area formula }\\
& \leq \int_{C_\varepsilon} \sigma_1(D_x \bar{p}_Z )\cdots  \sigma_i(D_x \bar{p}_Z )\mathcal{H}^i(dx). & \text{by definition of sing.\ values}
\end{align}
Note that, denoting by $j: C_\varepsilon \hookrightarrow \R^n$ the inclusion, we have
\begin{equation}
D_x \bar{p}_Z: T_x C_\varepsilon \xrightarrow{D_x j} \R^n \xrightarrow{D_x p} \R^m \xrightarrow{D_{p(x)} \pi_Z} \R^i.
\end{equation}
Thus, since $D_{p(x)} \pi_Z$ is an orthogonal projection and $D_x j$ is a linear inclusion, we have
\begin{equation}
\langle z , (D_x \bar{p}_Z) (D_x \bar{p}_Z)^\top z \rangle \leq \langle z ,(D_x p)(D_x p)^\top z \rangle, \qquad \forall z \in Z = \R^i.
\end{equation}
It follows that $\sigma_k(D_x \bar{p}_Z) \leq \sigma_k(D_x p)$ for all $k=1,\dots,i$. Continuing the above inequalities we obtain
\begin{align}
\mathcal{H}^i(p_Z(C_\varepsilon)) & \leq \int_{C_\varepsilon} \sigma_1(D_x p )\cdots  \sigma_i(D_x p )\mathcal{H}^i(dx) & \\
& \leq \sup_{x\in \mathcal{U}_\varepsilon(A)} \sigma_1(D_x p )\cdots  \sigma_i(D_x p ) \mathcal{H}^i(C_\varepsilon) & \text{by \cref{item:thm2} of \cref{thm:SemialgebraicSelection}}\\
& \leq \left(\overline{\sigma}_i + \eta(\varepsilon)\right)\mathcal{H}^i(C_\varepsilon), & \text{by continuity of sing.\ values}
\end{align}
where $\overline{\sigma}_i$ is defined in \eqref{eq:prodsigmaass}, and $\eta(\varepsilon) \to 0$ as $\varepsilon \to 0$. Since $C_\varepsilon$ is a bounded semialgebraic set of dimension $i$, then $\mathcal{H}^i(C_\varepsilon) = V_i(C_\varepsilon)$ by \cref{VolumeAndVariationsSemialg}. Using \cref{item:thm4,item:thm2} of \cref{thm:SemialgebraicSelection}, we obtain
\begin{align}
\mathcal{H}^i(C_\varepsilon) & = c(n,i)
		\int_{G_i(\R^n)}\left(
		\int_{Z}
		b_0
		( C_\varepsilon \cap \pi^{-1}_Z (x) )
		\,	\mathcal{H}^i(dx) \right) \gamma_{i,n}\left(dZ\right)  \\
		& \leq c(n,i) \beta_{\mathrm{dc}}^{n-i} \int_{G_i(\R^n)}\left(
		\int_{Z}
		\mathbbm{1}_{\pi_Z(C_\varepsilon)}(x)
		\,	\mathcal{H}^i(dx) \right) \gamma_{i,n}\left(dZ\right) 
		\\
		&  \leq c(n,i) \beta_{\mathrm{dc}}^{n-i} \int_{G_i(\R^n)}\left(
		\int_{Z}
		\mathbbm{1}_{\pi_Z(\mathcal{U}_\varepsilon(A))}(x)
		\,	\mathcal{H}^i(dx) \right) \gamma_{i,n}\left(dZ\right) 
		\\
		& \leq c(n,i) \beta_{\mathrm{dc}}^{n-i} \int_{G_i(\R^n)}\left(
		\int_{Z}
		b_0(\mathcal{U}_\varepsilon(A)\cap \pi_Z^{-1}(x))
		\,	\mathcal{H}^i(dx) \right) \gamma_{i,n}\left(dZ\right) \\
		& = \beta_{\mathrm{dc}}^{n-i} V_i(\mathcal{U}_\varepsilon(A)),
\end{align}
where $\beta_{\mathrm{dc}} = \beta_{\mathrm{dc}}(c,d,i)>1$ is the constant from \cref{thm:SemialgebraicSelection}, which (up to taking the maximum for $i\leq m$) depends only on $c,d,m$. Summing up, we have proved that for all $i=1,\dots,m$ it holds
\begin{equation}\label{eq:limit}
\mathcal{H}^i(p_Z(C_\varepsilon)) \leq (\overline{\sigma}_i + \eta(\varepsilon)) \beta_{\mathrm{dc}}^{n} V_i(\mathcal{U}_\varepsilon(A)).
\end{equation}
In order to take the limit in \eqref{eq:limit}, we recall two properties of semialgebraic sets. The first one is the continuity of the Lebesgue measure in the Hausdorff topology in the semialgebraic category (see \cite[Thm.\ 5.10]{ComteYomdin}): if $\{S_\varepsilon\}_{\varepsilon\geq 0}\subset \R^i$ is a one-parameter family of bounded semialgebraic sets then
\begin{equation}\label{eq:thm510}
\lim_{\varepsilon \to 0}\mathrm{dist}_H(S_\varepsilon,S_0) =0 \qquad \Longrightarrow \qquad \lim_{\varepsilon \to 0}\mathcal{H}^i(S_\varepsilon) = \mathcal{H}^i(S_0).
\end{equation}
The second property is the behaviour of variations for $\varepsilon$-neighbourhoods (see \cite[Thm.\ 5.11]{ComteYomdin}): if $A\subset \R^n$ is a bounded semialgebraic set, then for all $i=1,\dots,n$ it holds
\begin{equation}\label{eq:thm511}
\lim_{\varepsilon \to 0} V_i(\mathcal{U}_\varepsilon(A)) \leq V_i(A).
\end{equation}
Therefore, using \eqref{eq:thm510} (and recalling that by \cref{item:thm3} of \cref{thm:SemialgebraicSelection} it holds $p_Z(C_\varepsilon) \to p_Z(A)$ in the Hausdorff topology) and \eqref{eq:thm511} we can pass to the limit in \eqref{eq:limit} and obtain
\begin{equation}
\mathcal{H}^i(p_Z(A)) \leq \overline{\sigma}_i \beta_{\mathrm{dc}}^{n} V_i(A).
\end{equation}
Hence from \eqref{IGAF_0} we get for all $i=1,\dots,m$
\begin{equation}\label{eq:estVipA}
	V_i(p(A)) 
	\leq 
	c(m,i) \beta_{\mathrm{tm}}^n\beta_{\mathrm{dc}}^n \overline{\sigma}_i
	V_i(A) .
\end{equation}
To conclude, we estimate $V_i(A)$. Similarly as in \eqref{IGAF_0}, since $A\subset B_{\R^n}(r)$ we have
\begin{align}
V_i(A) \leq c(n,i) \beta_{\mathrm{tm}}^n \int_{G_i(\R^n)} \mathcal{H}^i(\pi_Z(A)) \gamma_{i,n}(dZ) \leq c(n,i) \beta_{\mathrm{tm}}^n \mathcal{H}^i(B_{\R^i}(r)) = c(n,i) \beta_{\mathrm{tm}}^n \frac{\pi^{i/2}}{\Gamma\left(\frac{i}{2}+1\right)}  r^i.\label{eq:estViA}
\end{align}
Putting together \eqref{eq:estVipA} and \eqref{eq:estViA}, we obtain
\begin{equation}
V_i(p(A)) 
	\leq \left[ c(m,i)c(n,i) \frac{\pi^{i/2}}{\Gamma\left(\frac{i}{2}+1\right)} \right] (\beta_{\mathrm{tm}}^2\beta_{\mathrm{dc}})^n \overline{\sigma}_i r^i.
\end{equation}
Set $\beta_{\mathrm{yc}} = \beta_{\mathrm{tm}}^2\beta_{\mathrm{dc}}$. Using the form of the constants $c(m,i), c(n,i)$ in \eqref{exactvalueconstant} and elementary estimates we obtain that for all $i=0,\dots,m$ there exists a constant $\mathrm{cst}(i)>0$ such that
\begin{equation}
V_i(p(A)) \leq  \mathrm{cst}(i) n^i \beta_{\mathrm{yc}}^n  \overline{\sigma}_i r^i.
\end{equation}
Taking the maximum over $i=1,\dots,m$ we obtain the result with $\mathrm{cst}(m,r) = \displaystyle \max_{i=0,\dots,m} \mathrm{cst}(i) r^i$.
\end{proof}

	\subsection{Variations of almost-critical values of polynomial maps}	\label{sec:variations}
	In this section we recall the notion of almost-critical points $C^{\Lambda} (p)$ of 
	a polynomial map $p: \R^n \to \R^m$. 
	The set $p( C^{\Lambda} (p) )$ is the set of its
	almost-critical values.
	The main result of this section is \cref{thm:estimatevariations}, which provides
	a version of 
	\cite[Cor.\ 7.4]{ComteYomdin} where the dependence of the parameters with respect to $n$ is explicit.
	This result is based on \cref{BehaviourVariations1} and on the specific semialgebraic structure of  almost-critical points of polynomial maps, that we prove in \cref{DiagramLambdaCrit}.
	\begin{definition}\label{def:lambdacrit}
		Let $f:\R^n \to \R^m$ be a $C^1$ map with $n \geq m$. 
		Given $\Lambda=(\Lambda_1 ,\dots, \Lambda_m) \in \R^m_+$, the set of \emph{almost-critical points} of $f$ is defined as
		\begin{equation}\label{NearCriticalPoints}
			C^{\Lambda}(f)
			:=\bigg
			\{ x \in \R^n\bigg| 
			\sigma_i(D_x f ) 
			\leq \Lambda_i,
			\,
			\forall\, i=1,\dots,m
			 \bigg\}  .
		\end{equation}
	\end{definition}
The following lemma is fundamental to study the semialgebraic structure of the set of almost-critical points of a polynomial map.
\begin{lemma}\label{SublevelsSemialgebraicFunc}
	Let $A \subset \R^s$ be a semialgebraic set and let
	$f :A \to \R$ be a semialgebraic function.
	Then, for any $t \in \R$, the sublevel set
	$\{ x \in A \mid f(x) \leq t \}$ admits a semialgebraic description whose
	diagram does not depend on $t$.
	\end{lemma}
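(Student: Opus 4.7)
The plan is to realize the sublevel set as a projection of the intersection of the graph of $f$ with an affine half-space whose combinatorial data does not depend on $t$, and then invoke the quantitative projection bound recalled in \cref{item:projectionsaugata}.

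First, I would fix once and for all a semialgebraic representation of the graph
\begin{equation}
G := \{(x,y) \in A \times \R \mid y = f(x)\} \subset \R^{s+1},
\end{equation}
which exists because $f$ is a semialgebraic function; this yields a diagram $D(G) = (s+1, c_0, d_0)$ with $c_0, d_0 \in \N$ depending only on $A$ and $f$. Next, for each $t \in \R$, I would introduce the half-space
\begin{equation}
H_t := \{(x,y) \in \R^s \times \R \mid y - t \leq 0\},
\end{equation}
and observe that the defining polynomial $y - t$ has degree $1$ in the variables $(x,y)$ regardless of $t$, so $D(H_t) = (s+1, 1, 1)$ is independent of $t$ as well.

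I would then form $G \cap H_t$, whose representation is obtained by conjoining the sign condition $y - t \leq 0$ to each clause in the fixed representation of $G$. This operation only increases the combinatorial count of $G$ by a constant factor and leaves the maximum degree equal to $\max(d_0,1)$, so the resulting diagram $D(G \cap H_t)$ depends only on $(c_0, d_0)$, not on $t$. Finally, I would use the identity
\begin{equation}
\{x \in A \mid f(x) \leq t\} = \pi(G \cap H_t),
\end{equation}
where $\pi : \R^s \times \R \to \R^s$ is the projection onto the first $s$ coordinates, and apply the explicit diagram bound \eqref{eq:bounddiagramproj} from \cref{item:projectionsaugata} to conclude that the diagram of the projection is controlled by a quantity depending only on $(s+1, c_1, d_1)$, hence independent of $t$.

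The only subtle point to notice is that the notion of diagram introduced in \cref{def:diagram} is purely combinatorial: it records the number of variables, the number of sign conditions, and the degrees of the defining polynomials, but never their coefficients. Since the parameter $t$ enters solely as the constant term of the single polynomial $y - t$, changing $t$ does not affect any of these three quantities, and the argument goes through without obstruction. There is therefore no serious technical hurdle here; the lemma is essentially a bookkeeping statement asserting that sublevel sets of a semialgebraic function form a $t$-uniform family in the complexity measure used throughout the paper.
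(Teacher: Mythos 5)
Your proposal is correct and follows the paper's proof essentially verbatim: both represent the sublevel set as the projection of the graph of $f$ intersected with the half-space $\{y - t \leq 0\}$, note that the added clause has degree $1$ independently of $t$, and invoke the quantitative projection bound of \cref{item:projectionsaugata}. No further comment is needed.
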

\begin{proof}
The graph of $f$ is semialgebraic, hence we can write it as
\begin{equation}
\graf (f) = 
\bigcup_{i=1}^{a} \bigcap_{j=1}^{b_i} E_{ij},
\end{equation} 
where $E_{ij}$ is of the form 
$\{p_{ij} < 0 \}$ or 
$\{ p_{ij} \leq 0 \}$
 and 
$p_{ij}: \R^s \times \R \to \R$ is a polynomial. Denoting by $\pi_1: \R^s \times \R \to \R^s$ the projection, we have
\begin{equation}\label{eq:projection}
	\{ x \in A \mid f(x) \leq t \} = 
	\pi_1 
	\Big( \graf (f)  \cap 
	\{ (x,y) \in \R^s \times \R 
	\mid y -t \leq 0\} \Big)  .
\end{equation}
The set $\graf (f)  \cap 
\{ (x,y) \in \R^s \times \R 
\mid y -t \leq 0\}$ is semialgebraic, with a description given by
\begin{equation}\label{DescrizioneSemialgGraf}
	\graf (f)  \cap 
	\{ (x , y)
	\mid y - t \leq 0\}
	=
	 \bigcup_{i=1}^{a} \bigcap_{j=1}^{b_i +1} 
	E_{i,j} ,
\end{equation}
where  
$E_{i , b_i + 1} = 
\{ (x , y) \in \R^s \times \R 
\mid y - t \leq 0\} $.
In particular, this set admits a diagram that does not depend on $t$, since the degree of the polynomial $y-t$ is equal to $1$. 
As explained in \cref{item:projectionsaugata}, the projection of a semialgebraic set is semialgebraic and its diagram is controlled explicitly by the diagram of the original set only. In particular, the diagram of the projection \eqref{eq:projection} admits a description with a diagram $ ( s , I , J)$, that does not depend on $t$.
\end{proof}

Let $\sym_m^+$ denote the set of positive semidefinite matrices of size $m$. This is clearly a semialgebraic subset of all $m \times m$ real matrices. The following lemma is elementary and we omit its proof.
\begin{lemma}\label{AutovaloriSemialg}
The functions $\lambda_i : {\sym}_m^+ \to \R$ are continuous and semialgebraic. 
\end{lemma}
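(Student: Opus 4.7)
The plan is to verify the two claims separately: continuity follows from an infinite–dimensional-style Weyl inequality (really just the finite–dimensional one here), and semialgebraicity is obtained by exhibiting the graph of $\lambda_i$ as the projection of an explicit semialgebraic set, then invoking Tarski–Seidenberg.

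For continuity, I would just apply Weyl's inequality in the form already established as \cref{SingolarValuesLipschitz} (or its classical finite-dimensional version \eqref{WeylFiniteDim}): for $A,B\in\sym_m^+$,
\begin{equation}
|\lambda_i(A)-\lambda_i(B)|\leq \|A-B\|_{\op},
\end{equation}
so each $\lambda_i$ is $1$-Lipschitz on $\sym_m^+$, hence continuous.

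For semialgebraicity, the idea is to package the (ordered, with multiplicity) eigenvalues of $M\in \sym_m^+$ via the coefficients of the characteristic polynomial. Write
\begin{equation}
\det(xI-M)=x^m - c_1(M)x^{m-1}+\cdots+(-1)^m c_m(M),
\end{equation}
where each $c_k(M)$ is a polynomial in the entries of $M$. Consider
\begin{equation}
S:=\bigg\{(M,\mu_1,\dots,\mu_m)\in\sym_m^+\times\R^m\ \bigg|\ \mu_1\geq\cdots\geq\mu_m\geq 0,\ e_k(\mu)=c_k(M)\ \forall k=1,\dots,m\bigg\},
\end{equation}
where $e_k$ is the $k$-th elementary symmetric polynomial. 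This is manifestly a semialgebraic set, being cut out by polynomial equalities and inequalities. Since $M\in\sym_m^+$ has real nonnegative eigenvalues, and since two monic polynomials of degree $m$ with the same elementary symmetric coefficients have the same multiset of roots, the fiber of $S$ over $M$ consists of the single point given by the ordered eigenvalues of $M$ (with multiplicity). Therefore the graph of $\lambda_i$ is the image of $S$ under the polynomial (in particular semialgebraic) projection $(M,\mu_1,\dots,\mu_m)\mapsto (M,\mu_i)$, and the image of a semialgebraic set under a polynomial map is semialgebraic by Tarski–Seidenberg, as recalled in \cref{item:projectionsaugata}. This proves that $\graf(\lambda_i)$ is semialgebraic and hence that $\lambda_i$ is a semialgebraic function.

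There is no real obstacle here: the only conceptual point is the multiplicity-with-ordering argument that characterizes the fiber of $S$ over each $M$, which rests on the uniqueness of the factorization of the characteristic polynomial once its coefficients are fixed. Everything else is a routine application of the closure properties of semialgebraic sets under polynomial equalities, inequalities, and projections.
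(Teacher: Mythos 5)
The paper declares this lemma elementary and omits its proof, so there is no argument of the authors to compare yours against; your proposal is correct and is the standard way to fill the gap. Both steps check out: continuity follows from \eqref{WeylFiniteDim} once one notes that for $M\in\sym_m^+$ the eigenvalues coincide with the singular values, and for semialgebraicity your set $S$ (matching the elementary symmetric functions of a decreasingly ordered tuple with the characteristic coefficients of $M$) has exactly one point in each fiber over $M$, so $\graf(\lambda_i)$ is its image under a coordinate projection and Tarski--Seidenberg applies.
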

In the following proposition we provide  an estimate on the diagram of a description of the (semialgebraic) set of almost-critical points of polynomial maps.
\begin{proposition}\label{DiagramLambdaCrit}
Let $d,m\in \N$. Then there are $c'=c'(d,m)$ and $d'=d'(d,m)$ such that for all $n\geq m$ and any polynomial map	 $p: \R^n \to \mathbb{R}^m$ with components of degrees at most $d$, and all $\Lambda= (\Lambda_1 , \dots , \Lambda_m) \in \R^m_+$, the set $C^{\Lambda}(p)$ is closed, semialgebraic, with
	\begin{equation}
		D( C^{\Lambda}(p) ) = (n , c' , d').
	\end{equation}
	(In particular, the diagram does not depend on $\Lambda$ and it depends on $n$ only in the number of variables.)
\end{proposition}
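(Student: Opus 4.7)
The plan is to reduce the description of $C^{\Lambda}(p)$ to the pull-back of finitely many sublevel sets of semialgebraic functions on the finite-dimensional space $\sym_m$ of symmetric $m\times m$ matrices, where all diagram data depend only on $d$ and $m$. The dependence on $n$ will enter only as the number of variables, and the dependence on $\Lambda$ will not enter at all, thanks to \cref{SublevelsSemialgebraicFunc}.

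First, observe that $\sigma_i(D_x p)\leq \Lambda_i$ if and only if $\lambda_i\big((D_x p)(D_x p)^\top\big)\leq \Lambda_i^2$, by \cref{def:singval}. The map
\begin{equation}
\Phi: \R^n \to \sym_m^+, \qquad \Phi(x):=(D_x p)(D_x p)^\top,
\end{equation}
is polynomial with components of degree at most $2(d-1)$; there are $m(m+1)/2$ such components. Thus
\begin{equation}
C^{\Lambda}(p) = \bigcap_{i=1}^{m} \Phi^{-1}\Big(\{M \in \sym_m^+ \mid \lambda_i(M)\leq \Lambda_i^2\}\Big).
\end{equation}

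Next I would invoke \cref{AutovaloriSemialg}, which gives that each $\lambda_i: \sym_m^+ \to \R$ is a continuous semialgebraic function on the fixed ambient space $\sym_m$ (whose dimension is $m(m+1)/2$, a function of $m$ alone). By \cref{SublevelsSemialgebraicFunc} applied to $\lambda_i$ with $t=\Lambda_i^2$, the sublevel set $S_i(\Lambda_i):=\{M\in \sym_m^+ \mid \lambda_i(M)\leq \Lambda_i^2\}$ admits a semialgebraic description
\begin{equation}
S_i(\Lambda_i) = \bigcup_{\alpha} \bigcap_{\beta} \{M \mid \mathrm{sign}(q_{\alpha\beta}^{(i)}(M)) = \sigma_{\alpha\beta}^{(i)}\},
\end{equation}
whose diagram $(m(m+1)/2, c_i, d_i)$ does \emph{not} depend on $\Lambda_i$, and therefore depends only on $m$ (since the function $\lambda_i$ is fixed once $m$ is). Here the polynomials $q_{\alpha\beta}^{(i)}$ are in the $m(m+1)/2$ matrix entries.

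Finally, I would perform the pull-back and intersection and track the diagram. The preimage $\Phi^{-1}(S_i(\Lambda_i))$ is described by the polynomials $q_{\alpha\beta}^{(i)}\circ \Phi$ on $\R^n$, which have degree at most $d_i \cdot 2(d-1)$; the combinatorial structure (the number of unions/intersections) is unchanged. Intersecting for $i=1,\dots,m$, distributing the intersection over the unions in the standard way, yields a semialgebraic description of $C^{\Lambda}(p)$ in $\R^n$ whose combinatorial complexity is bounded by $\prod_i c_i$ (a function of $m$ alone), whose polynomial degrees are bounded by $2(d-1) \max_i d_i$ (a function of $d$ and $m$), and whose number of variables is $n$. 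Setting
\begin{equation}
c' := \prod_{i=1}^m c_i, \qquad d' := 2(d-1)\max_{i=1,\dots,m} d_i,
\end{equation}
we obtain $D(C^{\Lambda}(p)) = (n,c',d')$ with $c',d'$ depending only on $d,m$. Closedness of $C^{\Lambda}(p)$ is immediate from the continuity of the singular values (\cref{SingolarValuesLipschitz}). The only bookkeeping subtlety is to ensure that the number of sign patterns produced when turning the unions/intersections of $m$ sets into a single description \eqref{eq:defA} remains bounded by a function of $m$; this is a purely combinatorial step, independent of $n$.
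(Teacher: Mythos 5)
Your proposal is correct and follows essentially the same route as the paper: reduce $\sigma_i(D_xp)\leq\Lambda_i$ to $\lambda_i\big((D_xp)(D_xp)^\top\big)\leq\Lambda_i^2$, use \cref{AutovaloriSemialg} and \cref{SublevelsSemialgebraicFunc} to get $\Lambda$-independent diagrams for the eigenvalue sublevel sets on $\sym_m^+$, pull back along the polynomial map $x\mapsto (D_xp)(D_xp)^\top$, and intersect over $i$ while tracking the (purely $m$-dependent) combinatorics. The only cosmetic difference is your sharper degree bound $2(d-1)$ for the components of $\Phi$ where the paper uses $2d$; both depend only on $d$, so nothing changes.
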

\begin{proof}
By \cref{AutovaloriSemialg}, for every $k=1, \ldots, m$, the function 
$\lambda_k : {\sym}_m^+ \to \R$ is semialgebraic, hence from \cref{SublevelsSemialgebraicFunc} applied to $A= \sym_m^+$ and the semialgebraic functions $\lambda_k:A\to \R$, we deduce the following fact:
for any $t \in \R$ we can write
\begin{equation}
\{ M \in \sym_m^+ \mid \lambda_k (M) \leq t^2 \} =
\bigcup_{i=1}^a 
\bigcap_{j=1}^{b_i} 
\{ M \in \R^{m\times m} \mid P_{k , i, j , t} (M) \leq 0\}
\end{equation}
for some polynomials 
$P_{k ,i, j , t}: \R^{m \times m} \to \R$, $a,b_1,\dots,b_a \in \N$, and with 
$ a \max_i b_i $  
and 
$\max_{i,j} \deg 
( P_{k , i,j, t} )$ depending only on $m$ and on $k$ (and not on $t$).
We can assume that $ a $ does not depend on $k$, and all the $ b_i $'s do not depend on $a,k$, simply by adding empty or trivial sets.

Now, for any $k= 1 , \dots , m$
\begin{equation}
\{N \in \R^{m \times n} \mid \sigma_k(N) \leq t\} 
=
\{N \in \R^{m \times n} \mid \lambda_k (NN^\top ) \leq t^2\} ,
\end{equation}
therefore we can write
\begin{equation}
	\{ x \in \R^n \mid \sigma_k( D_xp ) \leq t\} 	
	=
	\bigcup_{i=1}^a 
	\bigcap_{j=1}^{b_i} 
	\{x \in \R^n \mid P_{k , i,j , t} (D_x p D_xp^\top) \leq 0\} .
\end{equation}
Observe that $N \to NN^\top$ is polynomial with components of degree $2$.
Therefore, the map
$
q: \R^{n } \to 
\R^{m \times m},$
given by $q( x ) := D_x p D_xp^\top$ is polynomial, with components of degree at most $2d$. 
Hence, the polynomial
\begin{equation}
P_{k ,i,j , t} \circ q : 
\R^{n} \to \R
\end{equation}
has degree at most 
$2 d \max_{i,j} \deg P_{k,i,j, t}$, and in particular its degree has a bound that depends on only $d,m,k$ (and not on $t$).
The set of almost-critical points of $p$ can be described as
\begin{equation}
C^{\Lambda} (p) = 
\bigcap_{k =1}^m 
\{ x \in \R^n\mid 
\sigma_k (D_x p ) 
\leq \Lambda_k
\}
= \bigcap_{k=1}^m \bigcup_{i=1}^a \bigcup_{j=1}^{b_{i}} \{ x \mid P_{k,i,j,\Lambda_k}(D_x p D_x p^\top)\leq 0\}.
\end{equation}
It is clear that $C^\Lambda(p)$ is closed. Note that
$d':=\max_{k,i,j}  \deg 
(P_{k , i, j ,\Lambda_k} \circ q )$  depends only on $d,m$. Using the distributivity of intersection, we can rewrite $C^{\Lambda}(p)$ in the form \eqref{eq:defA}, where the number of unions and intersections depends only on $m$, and the maximum degree of the polynomials is $c' = c'(d,m)$, concluding the proof.
\end{proof}

Now we state and prove the main result of this section (\cref{thm:estimatevariations-intro} in the Introduction).
	\begin{theorem}\label{thm:estimatevariations}
	Let $n\geq m$, and $p: \R^n \to \R^m$ be a polynomial map with components of degree at most $d$. For  $ i= 0, \dots, m$, $\Lambda=(\Lambda_1 , \dots , \Lambda_m)\in \R^m_+$ and $r >0$, we have  
				\begin{equation}\label{estimatevariations2}
		V_i( p ( C^{ \Lambda }( p ) \cap B_{\R^n}(r) ))
			\leq
			\mathrm{cst}(m,r)n^m			\beta_0^n \Lambda_{0} 
			\cdots \Lambda_{i},
		\end{equation}
							where $\beta_0=\beta_0(d,m)>1$ depends only on $d$ and $m$, $\mathrm{cst}(m,r)$ depends only on $m,r$, and we set $\Lambda_0=1$.
						\end{theorem}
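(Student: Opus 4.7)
The plan is to derive the theorem by combining the structural result on almost-critical sets from \cref{DiagramLambdaCrit} with the quantitative variations bound from \cref{BehaviourVariations1}, together with the tautological bound on singular values that comes from belonging to $C^{\Lambda}(p)$.

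First, I apply \cref{DiagramLambdaCrit} to obtain that $C^{\Lambda}(p)$ is closed and semialgebraic with diagram $(n, c', d')$ where $c',d'$ depend only on $d$ and $m$, in particular not on $n$ (except as number of variables) and not on $\Lambda$. Intersecting with the ball $B_{\R^n}(r) = \{x \in \R^n \mid \|x\|^2 - r^2 \leq 0\}$ only adds one polynomial constraint of degree $2$, so
\begin{equation}
A := C^{\Lambda}(p) \cap B_{\R^n}(r)
\end{equation}
is a closed, bounded semialgebraic set with diagram $(n, c'', d'')$ for some $c'', d''$ depending only on $d$ and $m$.

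Second, I apply \cref{BehaviourVariations1} to $A$ and the polynomial map $p: \R^n \to \R^m$ (whose components have degree at most $d$). This yields, for each $i = 0, \dots, m$,
\begin{equation}
V_i(p(A)) \leq \mathrm{cst}(m,r)\, n^m\, \beta_{\mathrm{yc}}^n\, \overline{\sigma}_i,
\end{equation}
where $\overline{\sigma}_i = \sup_{x \in A} \sigma_1(D_x p) \cdots \sigma_i(D_x p)$, with $\overline{\sigma}_0 = 1$, and the constant $\beta_{\mathrm{yc}} = \beta_{\mathrm{yc}}(c'', d'', m)$ depends only on $d$ and $m$.

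Third, by the very definition \eqref{NearCriticalPoints} of $C^{\Lambda}(p)$, every $x \in A \subseteq C^{\Lambda}(p)$ satisfies $\sigma_k(D_x p) \leq \Lambda_k$ for each $k = 1, \dots, m$. Hence
\begin{equation}
\overline{\sigma}_i \leq \Lambda_1 \cdots \Lambda_i = \Lambda_0 \Lambda_1 \cdots \Lambda_i,
\end{equation}
since $\Lambda_0 = 1$. Setting $\beta_0 := \beta_{\mathrm{yc}}$ and plugging this bound into the previous inequality gives \eqref{estimatevariations2}. The case $i = 0$ is consistent: $\overline{\sigma}_0 = 1 = \Lambda_0$ and $V_0(p(A)) = b_0(p(A))$ is controlled by the Thom-Milnor bound that has already been absorbed into $\beta_0^n$ in \cref{BehaviourVariations1}.

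There is no real obstacle here: the deep work has been carried out in \cref{BehaviourVariations1}, whose proof required the quantitative approximate definable choice \cref{thm:SemialgebraicSelection} to make the dependence on $n$ explicit, and in \cref{DiagramLambdaCrit}, whose proof required analyzing the semialgebraic complexity of sublevel sets of the singular value functions. The only bookkeeping point is verifying that the diagram of $A$ depends only on $d$ and $m$ after intersection with the ball, which is immediate since adding one polynomial constraint of fixed degree $2$ changes $(c', d')$ only by constants depending on $d, m$.
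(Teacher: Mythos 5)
Your proposal is correct and follows essentially the same route as the paper's proof: apply \cref{DiagramLambdaCrit} to control the diagram of $A = C^{\Lambda}(p)\cap B_{\R^n}(r)$, bound $\overline{\sigma}_i$ by $\Lambda_0\cdots\Lambda_i$ directly from the definition of $C^\Lambda(p)$, and conclude via \cref{BehaviourVariations1} with $\beta_0 = \beta_{\mathrm{yc}}$ (the paper takes $\beta_{\mathrm{yc}}(c',\max\{d,d'\},m)$ so that the single degree parameter of \cref{BehaviourVariations1} covers both the diagram of $A$ and the components of $p$). Your extra remark about absorbing the ball constraint into the diagram is a correct bookkeeping detail that the paper leaves implicit.
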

	\begin{proof}
	By \cref{DiagramLambdaCrit} the set $A = C^{\Lambda}(p) \cap B_{\R^n}(r)$ is a closed and bounded semialgebraic set whose  diagram is $D(A) =(n,c',d')$ where $c'=c'(d,m)$, $d'=d'(d,m)$ (which in particular do not depend on $n$, $\Lambda$). Furthermore, for the polynomial map $p: \R^n \to \R^m$ it holds, by construction
	\begin{equation}
	\overline{\sigma}_i:=\sup_{x \in A} \Big({\sigma}_1(D_xp)\cdots{\sigma}_i(D_xp) \Big) \leq \Lambda_0\cdots \Lambda_i.
	\end{equation}
We can then apply \cref{BehaviourVariations1}, yielding \eqref{estimatevariations2} with $\beta_0 = \beta_{\mathrm{yc}}(c',\max\{d,d'\},m) >1$.
	\end{proof}
	
	\section{Sard properties for infinite-dimensional maps}\label{sec:Sardinfinite}
	
Let $H$ be a Hilbert space. We consider a map $f: U \to \R^m$ where $U$ is an open subset of $H$. If $f$ is $C^1$ we denote by $\crit(f)$ the set of its critical points. For a fixed $\nu \in \N$, we also consider the set
\begin{equation}\label{eq:nucritical}
	\crit_{\nu}(f) 
	=
	\{x \in U \mid \mathrm{rank}(D_xf) \leq \nu \}  .
\end{equation}

Given a relatively compact $S \subset \R^m$, for any $\varepsilon >0$ we denote by 
$M(\varepsilon , S) $ the \emph{$\varepsilon$-entropy} of $S$, which is the minimum number of closed balls of radius $\varepsilon$ that we need to cover $S$.
\begin{definition}[Entropy dimension]\label{def:entropy}
	The \emph{entropy dimension} of $S$ is defined as
	\begin{equation}
		\dim_e (S) = 
		\limsup_{\varepsilon \to 0^+}
		\frac{\ln M(\varepsilon , S) }{
			\ln(\frac{1}{\varepsilon}) }  .
	\end{equation}
\end{definition}
In \cite[Ch.\ 2]{ComteYomdin}, it is proved that
\begin{equation}
	\dim_{\mathcal{H}} (S) \leq \dim_e (S) .
\end{equation}
The inequality can be strict, since there are sequences of real numbers with positive entropy dimension. We also note that the entropy dimension is stable under \emph{finite} unions of sets, namely
\begin{equation}
\dim_e\left( \bigcup_{n=1}^N S_n\right) = \max_{n\leq N} \dim_e(S_n),
\end{equation}
while it is not even sub-additive with respect to \emph{countable} unions. In fact, the entropy dimension of a point in $\R$ is equal to zero, while the entropy dimension of $\mathbb{Q}\subset \R$ is equal to one. (This is a major difference with respect to the Hausdorff dimension which is countably stable.)

For Banach spaces $X$, $Y$ and linear continuous maps $L \in \mathcal{L}(X,Y)$, we denote by 
\begin{equation}\label{eq:operatornorm}
\|L\|_{\op}:= \sup_{\|x\|_{X}\leq 1} \|L x\|_{Y}
\end{equation}
the operator norm. With some abuse of notation the same symbol $\|L\|_{\op}$ is used for different $X$ and $Y$, but there should be no confusion as the domain and codomain of $L$ are clear from the context.  We also use without risk of confusion the symbol $\|\cdot\|$ to denote the usual Euclidean norm of $\R^m$.

\subsection{Sard-type theorem for well approximated maps}

The purpose of this section is to prove the following theorem, which is our main Sard-type result (\cref{GeneralSardTheorem-intro} in the Introduction).

\begin{theorem}\label{GeneralSardTheorem}
Let $d,m \in \N$. There exists a constant $\beta_0 = \beta_0(d,m)> 1$ such that the following holds. Let $H$ be a Hilbert space, and let $f: H\to \R^m$ be a $\mathcal{C}^1$ map such that its differential $Df : H \to \mathcal{L}(H,\R^m)$ is weakly continuous. Let $B\subset H$ be a bounded set with this approximation property: there exist a sequence $E_n\subset H$ of linear subspaces, $\dim(E_n) = n$, and polynomial maps $f_n: E_n \to \R^m$ with uniformly bounded degree:
\begin{equation}
\sup_{n \in \N } \deg f_n \leq d < \infty,
\end{equation}
such that for some $q>1$, $c\geq 0$, and all large enough $n$ it holds
\begin{equation}\label{ipotesiMainTheorem}
		\sup_{x \in B}
		\bigg(\|f(x)  - f_n \circ \pi_{E_n} (x) \|
		+
		\| \left(D_x f\right)|_{E_n}
		- D_{\pi_{E_n}(x)} f_n  \|_{\op}\bigg)
		\leq 
		c q^{-n}.
	\end{equation}
Then for all $\nu \leq m-1$ it holds
\begin{equation}
		\dim_e\bigg( f\big( \crit_{\nu} (f) \cap B \big)\bigg) 
		\leq 
		\nu + \frac{\ln \beta_0}{ \ln q}.
	\end{equation}
In particular, if $q> \beta_0$, then $f$ satisfies the Sard property on $B$:
	\begin{equation}
	\mu\bigg(f(\crit(f)\cap B)\bigg)=0.
	\end{equation}
\end{theorem}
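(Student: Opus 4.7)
The plan is to leverage the quantitative variation estimate of \cref{thm:estimatevariations} together with the polynomial approximation hypothesis \eqref{ipotesiMainTheorem}, building an efficient $\epsilon$-cover of $f(\crit_\nu(f)\cap K)$ and reading off the entropy dimension. Fix $R>0$ with $K\subset B_H(R)$. For any $x\in\crit_\nu(f)\cap K$ the singular values $\sigma_i(D_xf)$ vanish for $i\ge \nu+1$, and since restricting a linear map to a subspace cannot increase its singular values, $\sigma_i((D_xf)|_{E_n})=0$ as well for $i\ge \nu+1$. Combining \cref{SingolarValuesLipschitz} (the Weyl inequality) with \eqref{ipotesiMainTheorem} yields
\begin{equation}
\sigma_i(D_{\pi_{E_n}(x)} f_n)\le cq^{-n}, \qquad i=\nu+1,\dots,m.
\end{equation}
For the top $\nu$ indices one extracts a uniform constant $L\ge 1$ with $\sigma_i(D_y f_n)\le L$ for all $y\in \pi_{E_n}(K)$, all $i\le\nu$ and all $n$ large. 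This is done by comparing $f_n$ with a single fixed polynomial approximation $f_{n_0}$ through \eqref{ipotesiMainTheorem} and invoking the Markov inequality for polynomials of degree $\le d$ on the bounded set $\pi_{E_n}(K)\subset B_{E_n}(R)$. Setting $\Lambda^{(n)}:=(L,\dots,L,cq^{-n},\dots,cq^{-n})$ with $\nu$ copies of $L$ followed by $m-\nu$ copies of $cq^{-n}$, we obtain the inclusion $\pi_{E_n}(\crit_\nu(f)\cap K)\subset A_n:=C^{\Lambda^{(n)}}(f_n)\cap B_{E_n}(R)$.

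Apply \cref{thm:estimatevariations} to $f_n$ and $A_n$ to obtain, for every $i=0,\dots,m$,
\begin{equation}
V_i(f_n(A_n))\le \mathrm{cst}(m,R)\, n^m\, \beta_0^n\, \Lambda^{(n)}_0\cdots \Lambda^{(n)}_i.
\end{equation}
Set $\epsilon_n:=cq^{-n}$. A direct computation splitting the indices $i\le \nu$ and $i>\nu$ shows that in both regimes $V_i(f_n(A_n))\,\epsilon_n^{-i}\le \mathrm{cst}\cdot n^m(\beta_0 q^{\nu})^n$, the maximum being attained at $i=\nu$. Using the classical Vitushkin-type entropy-vs-variation inequality $M(\epsilon,S)\le C(m)\sum_{i=0}^m V_i(S)\epsilon^{-i}$ valid for every bounded $S\subset \R^m$ (\cite{ComteYomdin}), and noting that \eqref{ipotesiMainTheorem} forces $f(\crit_\nu(f)\cap K)$ to lie in the $\epsilon_n$-neighbourhood of $f_n(A_n)$, a scale-doubling then gives
\begin{equation}
M(2\epsilon_n, f(\crit_\nu(f)\cap K))\le \mathrm{cst}\cdot n^m(\beta_0 q^{\nu})^n.
\end{equation}
Taking logarithms and dividing by $\ln(1/(2\epsilon_n))\sim n\ln q$ yields, along the sequence $\epsilon_n$,
\begin{equation}
\frac{\ln M(2\epsilon_n,\cdot)}{\ln(1/(2\epsilon_n))}\longrightarrow \nu+\frac{\ln\beta_0}{\ln q}.
\end{equation}
The limsup over all $\epsilon\to 0^+$ coincides with this value by monotonicity of $M(\cdot,\cdot)$ in its first argument together with the constant geometric ratio $\epsilon_{n+1}/\epsilon_n=q^{-1}$. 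This is the claimed entropy bound, and for $q>\beta_0$ it gives $\dim_e(f(\crit(f)\cap K))<m$ with $\nu=m-1$, whence $\mu(f(\crit(f)\cap K))=0$.

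The principal obstacle is the need for a \emph{sharp} exponential dependence on $n$ in \cref{thm:estimatevariations}: any looser estimate (such as the original \cite{ComteYomdin} bound, in which the $n$-dependence is absorbed into implicit constants) would be insufficient to balance the gain $q^{-n(m-\nu)}$ coming from almost-criticality against the semialgebraic complexity cost $\beta_0^n$, and the asymptotics of the resulting entropy ratio would degenerate. A secondary delicate point is the $n$-uniform extraction of the bound $L$ on the large singular values of $D_y f_n$ for $y\in\pi_{E_n}(K)$, where weak continuity of $Df$, the approximation hypothesis, and the polynomial Markov inequality must be combined consistently as $n\to\infty$.
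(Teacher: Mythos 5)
Your proposal follows essentially the same route as the paper's proof: inclusion of $\crit_\nu(f)\cap K$ into almost-critical sets of the approximating polynomials via Weyl's inequality, the Vitushkin entropy-versus-variations bound $M(\varepsilon,S)\le C(m)\sum_i V_i(S)\varepsilon^{-i}$, the quantitative variations estimate of \cref{thm:estimatevariations}, and the coupling $\varepsilon_n\sim q^{-n}$ before passing to the limit. The one step where your mechanism does not quite work as written is the extraction of the uniform bound $L$ on $\sigma_i(D_yf_n)$ for $i\le\nu$: Markov's inequality controls the derivative of a polynomial on a \emph{ball} in terms of its values on that ball, and $\pi_{E_n}(K)$ need not contain one; moreover $f_n$ and $f_{n_0}$ live on different subspaces, so they cannot be compared directly through \eqref{ipotesiMainTheorem}. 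The paper instead sets $\Sigma_i:=\sup_{x\in K}\sigma_i(D_xf)$, which is finite because $K$ is bounded (hence weakly precompact) and $Df$ is weakly continuous, and then Weyl's inequality together with \eqref{ipotesiMainTheorem} gives $\sigma_i(D_{\pi_{E_n}(x)}f_n)\le\Sigma_i+cq^{-n}$ at the relevant points; with this substitution your argument is complete and coincides with the paper's.
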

\begin{remark}
As it will be clear from the proof, the upper bound $c q^{-n}$ in the r.h.s.\ of \eqref{ipotesiMainTheorem} can be replaced by any $C_n \geq 0$, such that $\limsup_{n\to\infty} C_n^{1/n} = q^{-1}$, yielding the same results.
\end{remark}

For the benefit of the reader we provide a general outline of the strategy of the proof. Our goal is to estimate the ``size'' of the set 
	$f( \crit_{\nu} (f ) \cap B )$.
	More precisely, we bound the \emph{$\varepsilon$-entropy} of 
	$f( \crit_{\nu} (f ) \cap B )$
	for any
	$\varepsilon >0$, and then pass to the limit $\varepsilon \to 0$.

	We observe that the $\varepsilon$-entropy of 
	$f( \crit_{\nu} (f ) \cap B )$ is controlled with the one of the set of 
	almost-critical values $f( C^{\Lambda} (f ) \cap B )$, see \cref{def:singval}. In turn, this is controlled by the $\varepsilon$-entropy of suitable almost-critical values of the approximating maps $f_n$, namely $f( C^{\Lambda_\varepsilon} (f_n ) \cap B )$, see \eqref{RelazioneEntropie}. 
	
To proceed, we use the relation between the $\varepsilon$-entropy of sets in $\R^m$ and their Vitushkin variations (\cref{defvariation}), provided by 
	\cite[Thm. 3.5]{ComteYomdin}.

	Finally, we estimate the Vitushkin variations of $f_n( C^{\Lambda_\varepsilon} (f_n ) \cap B )$ applying \cref{thm:estimatevariations-intro}, exploiting the hypothesis that the maps $f_n$ are polynomial. This is the connection with quantitative semialgebraic geometry from \cref{sec:quantitativesemialgebraic}. The crucial point of \cref{thm:estimatevariations-intro} is that the bound is quantitative with respect to parameters $\Lambda=\Lambda_\varepsilon$ and $n$. This allows to chose a large enough $n=n_\varepsilon$, see \eqref{eq:defneps}, to ensure convergence when passing to the limit $\varepsilon \to 0$.
	\begin{proof}

\textbf{Step 1}. We show that for any  
		$\nu= 0 ,\dots , m-1$
		the set $\crit_{\nu} (f) \cap B$ is contained in a suitable set of almost-critical points of $f$. By assumption, $B$ is weakly pre-compact. Since $Df : H \to \mathcal{L}(H,\R^m)$ is weakly continuous, and
		by \cref{SingolarValuesLipschitz} the function
		$\sigma_i : \mathcal{L}(H,\R^m)\to \R$ is continuous for every $i=1,\dots, m$. Therefore we have
		\begin{equation}
				\Sigma_i
				:=
				\sup_{B} 
				\sigma_i (D_x f) 
				< \infty  .
		\end{equation}
		Hence, defining $\Lambda:
		= 
		(\Sigma_1, \dots, \Sigma_{\nu}, 0 \dots,0) \in \R^m$,
		we get
		\begin{equation}\label{ranghi}
			\crit_{\nu} (f) \cap B
			\subset 
				C^{\Lambda}(f) \cap B .
		\end{equation}

\textbf{Step 2}. We now relate critical points of $f$ to almost-critical points of the approximating polynomials $f_n$. Fix $\varepsilon >0$. Recall that, by \cref{SingolarValuesLipschitz}, the singular values are $1$-Lipschitz with respect to the operator norm. Then for any sufficiently large
		$n \in \N$ such that 
		$ c q^{-n} \leq \varepsilon$ and for all $x \in B$ we have
		\begin{equation}\label{relationSVD}
				| 
				\sigma_i
				\left (D_x f |_{E_n} 
				\right) 
				- 
				\sigma_i
				(
				D_{\pi_{E_n} (x)} f_n
				) | 
				\leq 
				\| \left (D_x f \right )|_{E_{n}}
				- D_{\pi_{E_{n}} (x)} f_{n}  \|_{\op} \leq
				\varepsilon  .
		\end{equation}
		Hence, for all $x \in B \cap C^{\Lambda}(f)$	 
		we have the following estimate for all $i=1,\dots,m$
		\begin{equation}\label{SVDfinitodim}
				\sigma_i
				\left (
				D_{\pi_{E_n}(x)} f_n
				\right)
				\leq 
				\varepsilon
				+
				\sup_{y \in C^{\Lambda}(f) \cap B}
				\sigma_i
				\left( 
				D_y f |_{E_n}
				\right)
				\leq
				\varepsilon
				+
				\sup_{y \in C^{\Lambda}(f) \cap B }
				\sigma_i
				\left( 
				D_y f 
				\right)
				\leq 
				\varepsilon
				+\Lambda_i ,
		\end{equation}
	 where, in the second inequality, we used  that singular values are monotone with respect to restriction to subspaces. Hence, defining
		\begin{equation}
				\Lambda^\varepsilon
				:= 
				( \Sigma_1+ \varepsilon, \dots , \Sigma_{\nu} + \varepsilon, \varepsilon, \dots ,\varepsilon )   ,
		\end{equation}
		by \eqref{SVDfinitodim} we get
		\begin{equation}\label{inclusioncritn}
				\pi_{E_n}
				\left( C^{\Lambda}(f) 
				\cap B \right)
				\subset
				C^ { \Lambda^\varepsilon }(f_n)
				\cap 
				\pi_{E_n} (B)
				 \subset
				C^ { \Lambda^\varepsilon }(f_n)
				\cap 
				B_{E_n} (r),
		\end{equation}
		where $r>0$ is such that $B\subseteq B_H(r)$, and thus $B_{E_n} (r) = B_H(r) \cap E_n$ denotes the ball in $E_n$ defined by the restriction of the Hilbert norm.

\textbf{Step 3}.
		We use the inclusion \eqref{inclusioncritn} to estimate the entropy dimension of 
		$f( \crit_{\nu} (f) \cap B) $ 
		with the one of almost-critical values of $f_n$.		
		By \eqref{ipotesiMainTheorem}, for sufficiently large $n$, we have
		\begin{equation}\label{eq:dissup}
				\sup_{x \in B}
				\| f(x) - f_n \circ \pi_{E_n} (x)  \|
				\leq 
				\varepsilon  .
		\end{equation}
Finally, we get
		\begin{align}
					M
					( 2\varepsilon, f( \crit_{\nu} (f) \cap B) 
				    & \leq
					M (   2\varepsilon, f (C^{\Lambda}(f) \cap B ) ) & \text{by \eqref{ranghi}} \\
					& \leq 
					M \left( 2\varepsilon,  
					\mathcal{U}_{\varepsilon} 
					\left (f_n \circ \pi_{E_n}
					\left (
					C^{\Lambda}(f) \cap B \right )
					\right ) \right ) & \text{by \eqref{eq:dissup}} \\
					& \leq 
					M \left(\varepsilon,  
					f_n \circ \pi_{E_n}
					\left(C^{\Lambda}(f) \cap B \right )
					\right ) & \text{by definition of $\mathcal{U}_\varepsilon$} \\
					& \leq  M( \varepsilon , 
				f_n
				( C^{\Lambda^\varepsilon } (f_n)
				\cap
				B_{E_n}(r)
				)  ), & \text{by \eqref{inclusioncritn}} \label{RelazioneEntropie}
		\end{align}
		 where we recall that $M(\varepsilon,A)$ is the minimal number of closed balls of $\R^m$ of radius $\varepsilon$ needed to cover $A\subset \R^m$, and $\mathcal{U}_\varepsilon(A)$ denotes the $\varepsilon$ neighbourhood of $A$.
		This concludes the proof of step 3.

		The rest of the proof consists in estimating \eqref{RelazioneEntropie}. We will use the results from \cref{sec:variations}, for this reason in the next step we discuss how to bring the problem to the Euclidean space $\R^{n}$.

\textbf{Step 4}.
 		We fix a basis for $E_n$ which is orthonormal with respect to the inner product of $H$. This yields a linear isometry of Hilbert spaces $\ell_n :\R^n \to E_n$. In particular, for the transpose, it holds $\ell_n^\top =\ell_n^{-1}$. Consider then the polynomial map $\tilde{f}_n : \R^n \to \R^m$ defined by $\tilde{f}_n:=f_n\circ \ell_n$, $n\geq m$. We have
		 \begin{equation}
			\sigma_i(D_v \tilde{f}_n)
			=
			\lambda_i 
			( D_{\ell_n (v)} f_n \circ \ell_n \circ \ell_n^\top \circ D_{\ell_n (v)} f_n^\top  )^{1/2}
			=
			\sigma_i ( D_{\ell_n(v)} f_n ) , \qquad \forall\, v\in\R^n.
		 \end{equation}
	 	It follows that,  for sufficiently large $n$,
		 \begin{equation}\label{SingularValuesAndHilbertBasis}
		 	\ell_n (C^{\Lambda^\varepsilon} (\tilde{f}_n ))
		 	=
		 	C^{\Lambda^\varepsilon} (f_n )  .
		 \end{equation}
	 Hence, 
		 we have
		 \begin{equation}\label{NearCttiPointsInCoordinates}
			f_n
				( C^{\Lambda^\varepsilon } (f_n)
				\cap
				B_{E_n}(r)
					)
			=  
			f_n
		( \ell_n (C^{\Lambda^\varepsilon } (\tilde{f}_n))
		\cap
		B_{E_n}(r)
		)	
		=
		\tilde{f}_n
		( C^{\Lambda^\varepsilon } (\tilde{f}_n)
		\cap
		B_{\R^{n} } (r)
		)   .
	 \end{equation}
		By \cite[Thm.\ 3.5]{ComteYomdin} we have
		\begin{equation}\label{GeneralSard_2}
			M(  \varepsilon, 
			\tilde{f}_n
			( C^{\Lambda^\varepsilon } (\tilde{f}_n)
			\cap
			B_{\R^{n} } (r)
			)  
						)
			\leq
			C(m)
			\sum_{i=0}^m
			\frac{1}{\varepsilon^i}
			V_i\bigg(
			\tilde{f}_n
			( C^{\Lambda^\varepsilon } (\tilde{f}_n)
			\cap
			B_{\R^{n} } (r)
			)\bigg)  ,
		\end{equation}
		where $C(m)>0$ is a constant that depends only on $m$. We now apply the results from \cref{sec:variations} in order to estimate the right-hand side of the last inequality.
	 
\textbf{Step 5}. Take $n$ sufficiently large so that it also satisfies $n\geq m$. Then, by \cref{thm:estimatevariations-intro}, for every $i=0, \dots ,m$ we have 
		\begin{equation}\label{GeneralSard_1}
			V_i\bigg(
			\tilde{f}_n
			( C^{\Lambda^\varepsilon } (\tilde{f}_n)
			\cap
			B_{\R^{n} } (r)
			)\bigg)
			\leq
			\mathrm{cst}(m,r)n^m \beta_0^n \Lambda_{0}^\varepsilon 
			\cdots \Lambda_{i}^\varepsilon 
			 ,
		\end{equation}
where we recall $\beta_0=\beta_0(d,m)$ and $\Lambda_0^\varepsilon=1$ by definition.	Now, we have for all $i=1,\dots,m$
		\begin{align}
				\Lambda^\varepsilon_1 \cdots \Lambda^\varepsilon_i
				& =
				(\Lambda_1 +
				\varepsilon) 
				\cdots (\Lambda_{i} + \varepsilon) \\
				& = 
				\sum_{h=0}^i 
				\varepsilon^{i-h}
				\sum_{0 = j_0 < j_1 < 
					\dots < j_h \leq i}
				\Lambda_{j_0} \cdots \Lambda_{j_h} \\
				& \leq 
				\sum_{h=0}^i 
				\varepsilon^{i-h}
				\sum_{0 = j_0 < j_1 < 
					\cdots < j_h \leq i}
				\Lambda_0 \cdots \Lambda_h \\
				& = 
				\binom{i}{h}
				\sum_{h=0}^i 
				\varepsilon^{i-h}
				\Lambda_0 \cdots \Lambda_h\\  
				& \leq 
				i!
				\sum_{h=0}^i 
				\varepsilon^{i-h}
				\Lambda_0 \cdots \Lambda_h   .\label{GeneralSard_0}
		\end{align}
		Therefore, by \eqref{GeneralSard_2}, \eqref{GeneralSard_1}  and \eqref{GeneralSard_0}, for large $n$ we have
				\begin{align}
				M(  \varepsilon, 
				\tilde{f}_n
				( C^{\Lambda^\varepsilon } (\tilde{f}_n)
				\cap
				B_{\R^{n} } (r)
				)  
				)
				& \leq
				\mathrm{cst}(m,r) C(m)n^m \beta_0^{n} \sum_{i=0}^m \frac{	
				i!}{\varepsilon^i}  
				\sum_{h=0}^i 
				\varepsilon^{i-h}
				\Lambda_0 \cdots \Lambda_h   \\
				& \leq 
				\widetilde{\mathrm{cst}}(m,r)n^m
				\beta_0^{n}
				\sum_{h=0}^\nu
				\frac{\Lambda_0 \cdots \Lambda_h}{\varepsilon^h}  ,
		\end{align}
 where we note that, by definition, $\Lambda_h =0$ for all $h >\nu$, and $\widetilde{\mathrm{cst}}(m,r)$ denotes a constant that depends only on $m,r$.
		From this we deduce that
		\begin{equation}\label{stimaFinaleEntropiaPolinomi}
				\ln 
				M(  \varepsilon, 
				\tilde{f}_n
				( C^{\Lambda^\varepsilon } (\tilde{f}_n)
				\cap
				B_{\R^{n} } (r)
				)  
				)
				\leq 
				\ln ( \widetilde{\mathrm{cst}}(m,r) ) 
				+
				n
				\ln \beta_0 
				+
				m \ln n
				+
				\ln \left(    
				\sum_{h=0}^\nu 
				\frac{\Lambda_0 \cdots \Lambda_h }{\varepsilon^h}  
				  \right ),
		\end{equation}
	for all $\varepsilon >0$ and $n \in \N$ such that $ c q^{-n } \leq \varepsilon$ and $n \geq m$.

\textbf{Step 6}. We show how \eqref{stimaFinaleEntropiaPolinomi} yields the estimate on	$\dim_e(f(\crit_{\nu}(f) \cap B))$. For $\varepsilon>0$ we choose $n=n_\varepsilon$ where
\begin{equation}\label{eq:defneps}
n_\varepsilon:= \ceil*{  \log_q\frac{ c }{\varepsilon}},
\end{equation}
in such a way that \eqref{RelazioneEntropie},  \eqref{NearCttiPointsInCoordinates} and \eqref{stimaFinaleEntropiaPolinomi} hold when $\varepsilon \to 0$. By \eqref{RelazioneEntropie} and \eqref{NearCttiPointsInCoordinates} we have 
	\begin{equation}
		\dim_e f( \crit_{\nu} (f) \cap B )
		\leq 
		\limsup_{\varepsilon \to 0}
		\frac{
			\ln 
			M(  \varepsilon, 
			\tilde{f}_{n_{\varepsilon}}
			( C^{\Lambda^\varepsilon } (\tilde{f}_{n_{\varepsilon}})
			\cap
			B_{\R^{ {n_{\varepsilon}}} } (r)
			)  
			)
		}  
		{\ln \frac{1}{ 2 \varepsilon}}  .
		\end{equation}
	Hence now we estimate the right-hand side through \eqref{stimaFinaleEntropiaPolinomi}.
	As $\varepsilon $ goes to zero we have
	\begin{equation}
		\limsup_{\varepsilon \to 0}\frac{ n_{\varepsilon} \ln \beta_0}{\ln \frac{1}{2\varepsilon}}
		=  \lim_{\varepsilon \to 0}
		 \frac{	\ceil*{ 
				\log_q \frac{ c }{\varepsilon} }   
			\ln \beta_0}
		{\ln\frac{1}{2\varepsilon}}
		=		\frac{\ln \beta_0}{ \ln q} .
	\end{equation}
	We also have, taking into account \eqref{eq:defneps}, that
	\begin{equation}
\lim_{\varepsilon\to 0}		\frac{m \ln n_\varepsilon}{\ln ( \frac{1}{2 \varepsilon} )}
		= 0.
			\end{equation}
	Furthermore
\begin{equation}
 \lim_{\varepsilon \to 0} \frac{1 }{ \ln (\frac{1}{2 \varepsilon})}
\ln 
\left (\sum_{h=0}^{\nu} 
\frac{\Lambda_0 \cdots \Lambda_h}{\varepsilon^h}
\right ) 
	= \nu  .
\end{equation}
Therefore,
\begin{equation}
\dim_e f( \crit_{\nu} (f) \cap B )
\leq 
\nu + \frac{\ln \beta_0}{ \ln q},
\end{equation}		
concluding the proof.
	\end{proof}

\subsection{Sard-type theorems and Kolmogorov \texorpdfstring{$n$}{n}-width}

In this section, we recall the definition of Kolmogorov $n$-width, and we show its role in
Sard-type theorems. 

\begin{definition}[Kolmogorov $n$-width]\label{def:nwidth}
	Let $X$ be a normed space and $S \subset X$ be a subset. For $n \in \N$, the \emph{Kolmogorov $n$-width} of $S$ in $X$ is
	\begin{equation}
		\Omega_n(S,X)
		=
		\inf_{\dim E = n}
		\sup_{ u \in S}
		\inf_{ y \in E }
		\| 
		u - y
		\|  ,
	\end{equation}
	where the infimum is taken over all $n$-dimensional linear subspaces $E$ of $X$.
\end{definition}
The asymptotics of $n$-width measures in a quantitative way the compactness of a set, in fact the following holds (see \cite[Prop.\ 1.2]{Pinkus}).
\begin{proposition}\label{PropoPinkus}
	$S$ is precompact if and only if
	$S$ is bounded and $\lim_{n \to \infty}\Omega_n(S) = 0$.
\end{proposition}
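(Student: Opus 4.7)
The plan is to prove the two implications separately. At heart, the proposition expresses that total boundedness of $S$ is equivalent to being bounded plus well-approximated by finite-dimensional subspaces; full compactness (rather than just total boundedness) then follows from the tacit completeness of the ambient space and closedness of $S$.

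For the forward direction, I would assume $S$ is compact. Boundedness is immediate. To show $\Omega_n(S,X)\to 0$, fix $\varepsilon>0$ and extract from compactness a finite $\varepsilon$-net $\{u_1,\dots,u_N\}\subset S$. Setting $E:=\mathrm{span}\{u_1,\dots,u_N\}$, we have $\dim E\leq N$ and, for every $u\in S$ with a nearby net point $u_i$, $\inf_{y\in E}\|u-y\|\leq \|u-u_i\|\leq\varepsilon$. Hence $\Omega_N(S,X)\leq \varepsilon$, and since $n\mapsto\Omega_n(S,X)$ is non-increasing (a larger subspace can only improve the approximation), the whole sequence tends to zero.

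For the reverse direction, I would assume $S$ is bounded with $\Omega_n(S,X)\to 0$ and produce a finite $\varepsilon$-net in $S$ for arbitrary $\varepsilon>0$. Given $\varepsilon$, pick $n$ with $\Omega_n(S,X)<\varepsilon/3$ and an $n$-dimensional subspace $E\subset X$ approximately realizing the outer infimum, so that $\sup_{u\in S}\inf_{y\in E}\|u-y\|<\varepsilon/3$. For each $u\in S$, select $\pi(u)\in E$ with $\|u-\pi(u)\|<\varepsilon/3$. The image $\pi(S)\subset E$ is bounded (since $\|\pi(u)\|\leq\|u\|+\varepsilon/3$) and $E$ is finite-dimensional, so $\pi(S)$ admits a finite $\varepsilon/3$-net $\{y_1,\dots,y_M\}\subset E$. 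For any $u\in S$, choosing $y_j$ with $\|\pi(u)-y_j\|<\varepsilon/3$ gives $\|u-y_j\|<2\varepsilon/3<\varepsilon$ by the triangle inequality, yielding the desired $\varepsilon$-net for $S$.

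No step is a real obstacle; the only mildly delicate points are the $\varepsilon/3$ bookkeeping between the two successive approximations (first from $S$ to $E$, then inside the finite-dimensional $E$), and the observation that the passage from total boundedness to actual compactness requires $X$ complete and $S$ closed, both of which hold automatically in the Hilbert-space setting in which the paper applies this result.
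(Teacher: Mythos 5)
Your argument is correct, and it is the standard one; note that the paper does not actually prove this proposition but simply cites \cite[Prop.\ 1.2]{Pinkus}, so there is no in-paper proof to compare against. Both directions are sound: the forward implication via spanning a finite $\varepsilon$-net and invoking monotonicity of $n \mapsto \Omega_n(S,X)$ (which holds because any $n$-dimensional subspace sits inside an $(n{+}1)$-dimensional one when $\dim X > n$), and the reverse implication via the $\varepsilon/3$ argument establishing total boundedness, using that bounded subsets of the finite-dimensional $E$ are totally bounded. Your closing caveat is the one genuinely delicate point, and you are right to flag it: total boundedness upgrades to compactness only if $X$ is complete \emph{and} $S$ is closed. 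Completeness is automatic in the Hilbert-space setting, but closedness of $S$ is not --- as stated, the equivalence fails for, say, a non-closed bounded subset of a finite-dimensional space, which has $\Omega_n = 0$ for large $n$ yet is not compact. So the ``if'' direction really proves relative compactness (equivalently, compactness of $\overline{S}$, which by \cref{thmPinkus} has the same $n$--widths); this is exactly how the result is used in the paper, where the sets $K$ are closed by assumption.
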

For various properties of the $n$-width we refer the reader to \cite{Pinkus}; 
we recall here those that we will need in the sequel, see \cite[Thm.\ 1.1]{Pinkus}.
\begin{proposition}\label{thmPinkus}
Let $X$ be a normed space and $S \subset X$. Then for all $n \in \N$:
\begin{enumerate}[(i)]
\item $\Omega_n(S,X) = \Omega_n(\bar{S},X)$, where $\bar{S}$ is the closure of $S$;
\item For every $\alpha \in \R$
\begin{equation}
\Omega_n(\alpha S,X) = |\alpha| \Omega_n(S,X);
\end{equation}
\item Let $\mathrm{co}(S)$ be the convex hull of $S$. Then
\begin{equation}
\Omega_n(\mathrm{co}(S),X)=\Omega_n(S,X);
\end{equation}
\item Let $\mathrm{b}(S) = \{\alpha x \mid x \in S,\, |\alpha|\leq 1\}$ be the balanced hull of $S$. Then
\begin{equation}
\Omega_n(\mathrm{b}(S),X)=\Omega_n(S,X).
\end{equation}
\end{enumerate}
\end{proposition}
\begin{remark}
As a consequence, in all our results, up to enlarging $S$, one can assume without loss of generality that $S$ is convex and centrally symmetric, without changing its $n$-width.
\end{remark}
We establish now a connection between $n$-width of compact sets and locally Lipschitz functions.
 
\begin{lemma}\label{LocaleLipschitzCompattezza}
		Let $H$ be a Hilbert space, $(Y,\|\cdot\|_Y)$ a Banach space, and let $f: H \to Y$ be locally Lipschitz.  Let $K \subset H$ be a compact subset. Then there exists $n_0 \in \N$ such that for all $n \geq n_0$ there exist a linear subspace  $E_n \subset H$
	of dimension $n$
	such that
	\begin{equation}
		\sup_{x \in K}
		\| 	f(x)-f( \pi_{E_n}(x))	\|_Y		\leq 
		c(f,K)
		\Omega_n (K,H)   .
	\end{equation}
\end{lemma}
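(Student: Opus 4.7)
The plan is to combine two ingredients: (i) the compactness of $K$ together with the locally Lipschitz hypothesis on $f$ to obtain a uniform Lipschitz constant on some open neighborhood of $K$; and (ii) the definition of the Kolmogorov $n$--width combined with the fact that in a Hilbert space the distance of a point to a closed subspace is realized by the orthogonal projection. Once both are in place, the inequality will follow by a direct estimate.

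For step (i), I would proceed as follows. For every $x \in K$, since $f$ is locally Lipschitz, choose $r_x > 0$ and $L_x \geq 0$ such that $f$ is $L_x$-Lipschitz on the open ball $B_H(x, r_x)$. The balls $B_H(x, r_x/2)$ cover $K$, so by compactness I extract a finite subcover $B_H(x_1, r_{x_1}/2), \dots, B_H(x_N, r_{x_N}/2)$. Set $\varepsilon := \min_{i \leq N} r_{x_i}/2$ and $L := \max_{i\leq N} L_{x_i}$. A standard triangle inequality argument shows that for every $x \in K$ and every $y \in H$ with $\|y - x\| < \varepsilon$, the pair $x, y$ lies in some common ball $B_H(x_i, r_{x_i})$, so $\|f(x) - f(y)\|_Y \leq L \|x - y\|$. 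In particular $f$ is $L$-Lipschitz on the open $\varepsilon$-neighborhood $\mathcal{U}_\varepsilon(K)$ of $K$.

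For step (ii), by the definition of $\Omega_n(K,H)$, for each $n$ I can pick a linear subspace $E_n \subset H$ of dimension $n$ with
\begin{equation}
\sup_{x \in K} \inf_{y \in E_n} \|x - y\| \leq 2\,\Omega_n(K,H);
\end{equation}
since $H$ is Hilbert and $E_n$ is a closed finite-dimensional subspace, the inner infimum is attained at $y = \pi_{E_n}(x)$, so $\|x - \pi_{E_n}(x)\| \leq 2\,\Omega_n(K,H)$ for all $x \in K$. (If $\Omega_n(K,H) = 0$ for some large $n$, one chooses instead an $E_n$ containing $K$, so that $\pi_{E_n}(x) = x$; alternatively one can approximate with additive error $\Omega_n + 1/n$ and absorb everything in the final constant.) By \cref{PropoPinkus}, since $K$ is compact, $\Omega_n(K,H) \to 0$, so there exists $n_0 \in \N$ such that $2\,\Omega_n(K,H) < \varepsilon$ for all $n \geq n_0$. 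For such $n$ and any $x \in K$ we have $\pi_{E_n}(x) \in \mathcal{U}_\varepsilon(K)$, and by the uniform Lipschitz bound from step (i),
\begin{equation}
\|f(x) - f(\pi_{E_n}(x))\|_Y \leq L\,\|x - \pi_{E_n}(x)\| \leq 2L\,\Omega_n(K,H).
\end{equation}
Taking the supremum over $x \in K$ and setting $c(f,K) := 2L$ yields the claim.

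The only non-routine point is the passage from the pointwise locally Lipschitz hypothesis to a single Lipschitz constant valid on a whole $\varepsilon$-neighborhood of $K$; this is essentially a Lebesgue-number argument and relies on compactness of $K$ (in the strong topology of $H$), rather than on any Hilbert-space structure. The Hilbert structure enters only in identifying the best approximation of $x$ in $E_n$ with its orthogonal projection $\pi_{E_n}(x)$, and the asymptotic smallness of $\Omega_n(K,H)$ is what allows these projections to land in the neighborhood where the Lipschitz bound applies.
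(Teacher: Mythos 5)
Your proposal is correct and follows essentially the same route as the paper's proof: a finite subcover of $K$ by balls on which $f$ is uniformly Lipschitz, a near-optimal subspace $E_n$ giving $\|x-\pi_{E_n}(x)\|\leq 2\,\Omega_n(K,H)$ via the orthogonal projection, and the observation that $\Omega_n(K,H)\to 0$ (by compactness) forces $\pi_{E_n}(x)$ into the region where the Lipschitz bound applies, yielding $c(f,K)=2L$. Your explicit handling of the degenerate case $\Omega_n(K,H)=0$ is a small extra care the paper omits, and note that step (i) only establishes the Lipschitz bound for pairs with one point in $K$ (not full Lipschitzness on $\mathcal{U}_\varepsilon(K)$), which is all the argument actually uses.
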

\begin{proof}
Let $\{B_H(x_i,r/2)\}_{i \in I}$ a finite cover of $K$ by balls, with centers $x_i \in K$ and radii $r/2>0$. We can assume that $f$ is $L$-Lipschitz on each $B_H(x_i,r)$, for some $L>0$. Since $K$ is compact $\Omega_n(K,H) \to 0$ as $n \to \infty$, hence there exists $n_0 \in \N$ such that for all $n\geq n_0$ we have $\Omega_n(K,H)< r/4$. By definition of $n$-width, for all $n \geq n_0$ there exists a $n$-dimensional subspace $E_n \subset H$ such that
\begin{equation}\label{LemmaLocLipscEQUATION2}
\sup_{x \in K} \|x- \pi_{E_n}(x)\| < 2\Omega_n(K,H) < \frac{r}{2}.
\end{equation}
Let $x \in K$. Then by construction $x \in B_H(x_i,r/2)$ for some $i \in I$. By \eqref{LemmaLocLipscEQUATION2} we have $\pi_{E_n}(x) \in B_H(x_i,r)$. Since $f$ is $L$-Lipschitz on every $B_H(x_i,r)$ we have
\begin{equation}
\sup_{x \in K} \|f(x) - f(\pi_{E_n}(x))\|_Y \leq L \sup_{x \in K} \|x- \pi_{E_n}(x)\| \leq 2L \Omega_n(K,H),
\end{equation}
where we used again \eqref{LemmaLocLipscEQUATION2}. This proves the result, with $c(f,K) = 2L$.
\end{proof}
We can now prove the following Sard-type theorem, corresponding to \cref{CorollaryOfGeneralSard-intro} in the Introduction.
\begin{theorem}\label{CorollaryOfGeneralSard}
Let $H$ be a Hilbert space and $K \subset H$ be a compact set such that, for some $q>1$, it holds
\begin{equation}
\limsup_{n\to \infty}\Omega_n(K,H)^{1/n} \leq q^{-1}.
\end{equation}
Let $d,m \in \N$. There exists $\beta_0 = \beta_0(d,m)>1$ such that for all $f\in \mathscr{P}_{d}^m(H)$ and $\nu \leq m-1$ we have	
\begin{equation}
		\dim_e \bigg(f\big( \crit_{\nu} (f) \cap K \big)\bigg)
		\leq 
		\nu + \frac{\ln \beta_0}{ \ln q}.
	\end{equation}
	In particular, if $q> \beta_0$, then the Sard property holds on $K$:
	\begin{equation}
	\mu\bigg(f\big(\crit (f) \cap K\big)\bigg)=0.
	\end{equation} 
\end{theorem}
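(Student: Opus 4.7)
The plan is to derive the theorem as a direct corollary of \cref{GeneralSardTheorem}. The definition of $\mathscr{P}_d^m(H)$ furnishes precisely the ingredients needed: $f$ is $\mathcal{C}^1$ Fréchet (hence locally Lipschitz), its differential $Df$ is weakly continuous and locally Lipschitz, and the restriction of $f$ to any finite-dimensional subspace is polynomial of degree at most $d$. The only task is to produce the approximating polynomial maps $f_n:E_n\to\R^m$ whose error in the sense of \eqref{ipotesiMainTheorem} is controlled by $\Omega_n(K,H)$, then invoke the hypothesis on the $n$-width asymptotics to close the argument.

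First, for each sufficiently large $n$, I would select an $n$-dimensional subspace $E_n\subset H$ that is nearly optimal for the $n$-width in the sense that $\sup_{x\in K}\|x-\pi_{E_n}(x)\|\leq 2\Omega_n(K,H)$, which is possible by the very definition of $\Omega_n(K,H)$. Set
\begin{equation*}
f_n:=f|_{E_n}:E_n\to\R^m.
\end{equation*}
Since $\dim E_n<\infty$ and $f\in\mathscr{P}_d^m(H)$, the restriction $f_n$ is polynomial of degree at most $d$, so $\sup_n\deg f_n\leq d$, as required by \cref{GeneralSardTheorem}.

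Next, I would verify the approximation estimate \eqref{ipotesiMainTheorem}. Both $f$ and $Df$ are locally Lipschitz on $H$, so applying \cref{LocaleLipschitzCompattezza} to $f$ and then to $Df$ (using a common Lipschitz radius in the cover constructed inside the proof of that lemma, so that a single $E_n$ works for both), one obtains constants $c_1,c_2>0$ depending on $f$ and $K$ such that, for all large enough $n$,
\begin{align*}
\sup_{x\in K}\|f(x)-f(\pi_{E_n}(x))\|&\leq c_1\,\Omega_n(K,H),\\
\sup_{x\in K}\|D_xf-D_{\pi_{E_n}(x)}f\|_{\op}&\leq c_2\,\Omega_n(K,H).
\end{align*}
Since $\pi_{E_n}(x)\in E_n$, one has $f_n\circ\pi_{E_n}(x)=f(\pi_{E_n}(x))$ and $D_{\pi_{E_n}(x)}f_n=(D_{\pi_{E_n}(x)}f)|_{E_n}$. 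Restriction to a subspace does not increase the operator norm, so
\begin{equation*}
\|(D_xf)|_{E_n}-D_{\pi_{E_n}(x)}f_n\|_{\op}\leq\|D_xf-D_{\pi_{E_n}(x)}f\|_{\op}.
\end{equation*}
Summing, we obtain the approximation bound of \eqref{ipotesiMainTheorem} with $\Omega_n(K,H)$ in place of $q^{-n}$ and constant $c_1+c_2$.

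Finally, the hypothesis $\limsup_n\Omega_n(K,H)^{1/n}\leq q^{-1}$ is exactly the exponential decay condition in the remark following \cref{GeneralSardTheorem}, so applying that theorem yields the entropy-dimension bound with the same constant $\beta_0(d,m)$. The Sard conclusion when $q>\beta_0$ follows immediately: since $K$ is compact and $f$ continuous, $f(\crit(f)\cap K)$ is a bounded subset of $\R^m$ with entropy dimension $\leq(m-1)+\tfrac{\ln\beta_0}{\ln q}<m$, hence of zero Lebesgue measure. No substantive obstacle arises; the only mild technical point is coordinating a single sequence $E_n$ for the two Lipschitz estimates, handled by taking the common radius in \cref{LocaleLipschitzCompattezza}. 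Alternatively, one may apply \cref{GeneralSardTheorem} with any $q'\in(1,q)$ (so that $\Omega_n(K,H)\leq C_{q'}\,(q')^{-n}$ for large $n$) and then let $q'\to q^-$ to obtain the sharp bound.
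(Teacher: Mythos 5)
Your proposal is correct and follows essentially the same route as the paper: restrict $f$ to near-optimal subspaces $E_n$ for the $n$-width, use the local Lipschitz property of $f$ and $Df$ together with \cref{LocaleLipschitzCompattezza} to verify hypothesis \eqref{ipotesiMainTheorem}, and invoke \cref{GeneralSardTheorem} with $q'\uparrow q$. The only cosmetic difference is that the paper applies \cref{LocaleLipschitzCompattezza} once to the combined map $(f,Df):H\to\R^m\times\mathcal{L}(H,\R^m)$, which sidesteps the coordination issue you handle by hand.
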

\begin{proof}
Let $q_\varepsilon \in (0,q)$. By assumption, $\Omega_n (K , H) \leq q^{-n}_\varepsilon$ for sufficiently large $n$.
	We prove that $f$ satisfies the hypothesis of \cref{GeneralSardTheorem-intro}. Since $f \in \mathscr{P}^m_d(H)$ (see \cref{def:polyH}), the map
	\begin{equation}
		(f,Df) : H \to 
		\R^m \times \mathcal{L}(H , \R^m)
	\end{equation}
is locally Lipschitz. The codomain $Y=\R^m \times \mathcal{L}(H , \R^m)$ is a Banach space equipped with the norm $\|(v,A)\|_Y = \|v\| + \|A\|_{\op}$ for $v \in \R^m$ and $A \in \mathcal{L}(H,\R^m)$. Hence we can apply \cref{LocaleLipschitzCompattezza} to 
get $n_0 \in \N$ such that for all $n \geq n_0$ there exists a $n$-dimensional linear subspace 
	$E_n \subset H$ such that
	\begin{equation}
		\sup_{x \in K}
		\|f(x) - f( \pi_{E_n} (x))\|
		+
		\|D_xf - 
			D_{ { \pi_{ E_n(x) } } } f \|_{\op}
		\leq 
		c(f, K) q^{-n}_\varepsilon,
	\end{equation}
where $\pi_{E_n}: H \to E_n$ denotes the orthogonal projection. Let $f_n : E_n \to \R^m$ defined by the restriction $f_n:=f|_{E_n}$, which are polynomial maps with degree uniformly bounded above by $d$. It holds
	\begin{equation}
	\|
	 (D_x f)|_{E_n}
	-
	D_{\pi_{E_n} (x)} f_n
	\|_{\op}
=
	\|
	 (D_x f)|_{E_n}
	-
	(D_{\pi_{E_n} (x)} f )|_{E_n}
	\|_{\op}
	\leq
	\|
	D_x f
	-
	D_{\pi_{E_n} (x)} f
	\|_{\op}.
\end{equation}
As a consequence of the above two estimates, assumption \eqref{ipotesiMainTheorem-intro} of \cref{GeneralSardTheorem-intro} holds, yielding
\begin{equation}
		\dim_e \bigg(f\big( \crit_{\nu} (f) \cap K \big)\bigg)
		\leq 
		\nu + \frac{\ln \beta_0}{ \ln q_\varepsilon}.
	\end{equation}
Letting $q_\varepsilon \uparrow q$ we obtain the thesis.
\end{proof}

\subsection{Counterexamples to the Sard theorem in infinite dimension}\label{sec:point2}

In this section we provide examples of maps $f \in \mathscr{P}^m_d(H)$ as in \cref{CorollaryOfGeneralSard-intro} 
for which there exists a compact set $K\subset H$ 
such that the set
$f( \crit(f) \cap K)$ has not measure zero, however it holds
$\Omega_n (K,H) \leq c q^{-n}$. This shows the necessity of the quantitative assumption $q > \beta_0 (m , d)$.
As a consequence, we get lower bounds on the semialgebraic constant $\beta_0$.

We start with the following example, which is a minor modification of Kupka's one \cite{Kupka}. 
\begin{example}[Kupka revisited]\label{ExampleOptimality1}
Let $q>1$. Let $f:\ell^2 \to \R$ defined by
\begin{equation}
	f(x) = \sum_{k=1}^\infty\frac{1}{2^k}\phi(q^{k-1} x_k),
\end{equation}
where $\phi$ is the Kupka polynomial as in 
\cite[Sec.\ 10.2.3]{ComteYomdin}. Namely $\phi$ has degree $3$, with  $\phi(0)=\phi'(0)=\phi(1)-1=\phi'(1)=0$. If we assume
$ q^3 / 2 <1 $
then $f$ is $C^\infty$. All critical points have the form
\begin{equation}
	\crit(f) 
	= 
	\left\lbrace
	\left( x_1,\frac{x_2}{q},\dots,\frac{x_k}{q^{k-1}},\dots \right)\,\middle\vert\, x_i \in \{0,1\} \right\rbrace,
\end{equation}
and the set of critical values is $[0,1]$. 
Furthermore, given
$y=
\left( 
x_1,\frac{x_2}{q},\dots,\frac{x_k}{q^{k-1}},\dots \right) \in \crit(f)$, 
we consider its projection $y_n$ on 
$\mathrm{span} \{e_1 , \dots , e_n\}$, that is
\begin{equation}
	y_n
	=
	\left( 
	x_1,\frac{x_2}{q},\dots,\frac{x_n}{q^{n-1}},0,0,0, \dots \right) .
\end{equation} 
We have
\begin{equation}
\norm{ y - y_n }_{\ell^2}^2 \leq
\sum_{k= n+1}^{\infty } \frac{ 1 }{ q^{ 2 ( k-1 ) } }
\leq
\int_{n}^{ \infty} \frac{1}{q^{ 2t }} = 
\frac{q^{-2n}}{2 \ln q}  .
\end{equation}
Hence,
\begin{equation}
\Omega_n(\crit(f) , \ell^2) 
\leq 
\sup_{ y \in \crit(f)}
\norm{ y - y_n }_{\ell^2}
\leq 
\frac{q^{-n}}{\sqrt{2 \ln q}} \qquad \Longrightarrow \qquad \limsup_{n\to\infty}\Omega_n(\crit(f),\ell^2) \leq q^{-1}.
\end{equation}
Since $f$ and $\crit(f)$ satisfy all the hypothesis of \cref{CorollaryOfGeneralSard-intro} (provided that $q^3/2<1$), we get a consequence that
\begin{equation}
\beta_0 ( 1 , 3 ) \geq 2^{1/3}.
\end{equation}
\end{example}

We now get the lower bound on $\beta_0(1 , d)$ for $d\geq 3$ by improving on the above construction.

\begin{example}[Kupka revisited, degree $d\geq 3$]\label{ExampleOptimality2}
We need the following classical fact, which follows from \cite{realpoly2}: for any $d \geq 2$ there exists a polynomial	$\psi :\R \to \R$ of degree $d$ such that
	\begin{equation}\label{PolinomioValoriCritici}
		\psi (\crit (\psi))= \{0, \dots , d-2\}.
	\end{equation}
(See also \cite[Thm.\ 1]{realpoly} which gives  a version of this result taking into account also possible multiplicities.)
We fix $d\geq 3$ and we consider a polynomial $\psi$ of degree $d$ such that \eqref{PolinomioValoriCritici} holds. For $q >1$ we consider the map $f_d: \ell^2 \to \R$
\begin{equation}
f_d(x) := 
\sum_{k=1}^{\infty}
\frac{1}{ (d-1)^k }
\psi ( q^{k-1} x_k )  .
\end{equation}
If $q \in (1 , (d-1)^{1/d} ) $, then the map $f_d$ is $C^{\infty}$. The set of critical points of $f_d$ is 
\begin{equation}
\crit(f_d) =
\left \{ 
\left (
y_1 ,  \frac{y_2}{q},\dots , \frac{y_k}{q^{k-1} }  , \dots \right)
\in 
\ell^2
\,\middle\vert\,
\psi'(y_k)=0,\, \forall\, k \in \N
\right \}  .
\end{equation}
Hence we have
\begin{equation}
f_d (\crit (f_d)) =
\left \{ 
\sum_{k=1}^{\infty}
\frac{1}{(d-1)^k} c_k \, \middle \vert \,
c_k \in \{0, \dots, d-2\}
\right \}
=
[0,1]  .
\end{equation}
Let $\zeta = \max\{ |z| \mid \psi'(z) =0\}$. Exactly as in \cref{ExampleOptimality1} we get for every $n \in \N$
\begin{equation}
	\Omega_n(\crit(f_d) , \ell^2) 
	\leq 
	\frac{\zeta}{\sqrt{2 \ln q}} 
	q^{-n} \qquad \Longrightarrow \qquad \limsup_{n\to\infty}\Omega_n(\crit(f_d) , \ell^2)  \leq  q^{-1}.
\end{equation}
For any $q \in (1 , (d-1)^{1/d})$ the function $f_d$ and the set $K = \crit(f_d)$ satisfy the hypothesis of \cref{CorollaryOfGeneralSard}, but $f_d(\crit(f_d))=[0,1]$. It follows that
\begin{equation}
\beta_0 (1 , d) \geq (d-1)^{1/d},\qquad \forall\, d\geq 3.
\end{equation}
\end{example}

In the next example we extend the construction to arbitrary dimension of the codomain.
\begin{example}[Kupka revisited, $d\geq 3$, arbitrary codomain]\label{ExampleOptimality3}
Let $f_d$ be as in \cref{ExampleOptimality2}, and consider 
$g_d: \R^{m-1} \times \ell^2 \to \R^m$ defined as
\begin{equation}
	g_d(x , y) := (x , f_d (y) ).
\end{equation}
We have 
\begin{equation}
	\crit( g_d )
	= 
	\R^{m-1} \times \crit ( f_d )  .
\end{equation}
Taking $E_{n+m-1} = \mathrm{span}\{(e_i,0),\, (0,e_j) \mid i=1,\dots,m-1,\, j=1,\dots,n\} \subset \R^{m-1}\times \ell^2$, for all $n\in \N$, we can estimate the $n$-width of $\crit(g_d)$ as
\begin{equation}
	\Omega_{n+m-1} (\crit (g_d) , \R^{m-1} \times \ell^2 )
	\leq
	\frac{\zeta}{\sqrt{2 \ln q} } q^{-n}\qquad \Longrightarrow \qquad \limsup_{n\to\infty}\Omega_n(\crit(g_d), \R^{m-1} \times \ell^2) \leq q^{-1}.
\end{equation}
We observe, provided that $q\in (1,(d-1)^{1/d})$, the function $g_d$ satisfies all the hypothesis of \cref{CorollaryOfGeneralSard}, but the set
$g_d(\crit(g_d)) = \R^{m-1} \times [0,1]$ has not measure zero. Hence
\begin{equation}
\beta_0(d,m) \geq (d-1)^{1/d},\qquad \forall\, m\in \N,\, d\geq 3.
\end{equation}
\end{example}

We collect the content of \cref{ExampleOptimality1,ExampleOptimality2,ExampleOptimality3} in a unified statement (see \cref{thm:sintesiexamples-intro}).

\begin{theorem}\label{thm:sintesiexamples}
Let $d,m \in \N$, with $d\geq 3$, and let $1<q<(d-1)^{1/d}$. There exist a Hilbert space $H$, and $f\in \mathscr{P}_d^m(H)$ such that $K=\crit(f)\cap B_H(r)$ is compact for all $r>0$, with
\begin{equation}
\limsup_{n\to \infty} \Omega_n(K,H)^{1/n} \leq q^{-1},
\end{equation}
and $f : H \to \R^m$ does not verify the Sard property, namely $\mu(f(\crit(f)\cap K)) > 0$. Therefore, the semialgebraic constant $\beta_0(d,m)$ of \cref{CorollaryOfGeneralSard-intro} satisfies
\begin{equation}
\beta_0(d,m) \geq (d-1)^{1/d}, \qquad \forall\, m\in\N,\, d\geq 3.
\end{equation}
\end{theorem}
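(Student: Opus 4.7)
The plan is to build an explicit counterexample along the lines of Kupka's classical construction, first handling the case $m=1$, then extending to general $m$ by padding trivially with identity components. The quantitative threshold $q<(d-1)^{1/d}$ and the lower bound $\beta_0(d,m)\geq(d-1)^{1/d}$ will both arise from the same scalar rescaling trick.

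The first step is to fix a univariate polynomial $\psi:\R\to\R$ of degree $d\geq 3$ whose critical values are exactly $\{0,1,\dots,d-2\}$; such $\psi$ exists by classical results in real algebraic geometry, cf.\ \cite{realpoly,realpoly2}. With this in hand, for $q\in(1,(d-1)^{1/d})$ I define
\[
f_d(x) := \sum_{k=1}^\infty \frac{1}{(d-1)^k}\,\psi\bigl(q^{k-1} x_k\bigr),\qquad x\in \ell^2.
\]
The balance $q^d<d-1$ makes the series and its formal derivative uniformly convergent on bounded sets, so $f_d$ is Fréchet $C^1$ with weakly continuous, locally Lipschitz differential, hence $f_d\in \mathscr{P}_d^1(\ell^2)$ in the sense of \cref{def:polyH}.

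Next, direct computation shows that $x\in\crit(f_d)$ iff $q^{k-1} x_k\in\crit(\psi)$ for every $k$. Substituting back and using the prescribed critical values of $\psi$, one identifies $f_d(\crit(f_d))$ with the set of all base-$(d-1)$ expansions with digits in $\{0,\dots,d-2\}$, which equals $[0,1]$; this already contradicts Sard. The critical points are the sequences $(y_1,y_2/q,\dots,y_k/q^{k-1},\dots)$ with $y_k\in\crit(\psi)$, so they lie in the ball of radius $\zeta(1-q^{-2})^{-1/2}$, where $\zeta:=\max\{|z|:z\in\crit(\psi)\}$, and $K:=\crit(f_d)$ is compact by diagonal extraction plus a tail bound. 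To estimate the $n$-width, project onto $E_n:=\mathrm{span}\{e_1,\dots,e_n\}$ and compare the tail $\sum_{k>n} q^{-2(k-1)}$ with $\int_n^\infty q^{-2t}\,dt$, yielding $\Omega_n(K,\ell^2)\leq \zeta(2\ln q)^{-1/2}q^{-n}$ and hence $\limsup_{n\to\infty} \Omega_n(K,\ell^2)^{1/n}\leq q^{-1}$.

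Finally, to accommodate arbitrary codomain dimension $m$, I set $H:=\R^{m-1}\times\ell^2$ and $g_d(u,x):=(u,f_d(x))$. Then $\crit(g_d)=\R^{m-1}\times\crit(f_d)$ and $g_d(\crit(g_d))=\R^{m-1}\times[0,1]$, which has positive Lebesgue measure; intersecting with $B_H(r)$ only truncates the $\R^{m-1}$ factor and leaves the exponential $n$-width rate intact (up to an irrelevant shift by $m-1$ in the index). The lower bound $\beta_0(d,m)\geq(d-1)^{1/d}$ then follows by applying \cref{CorollaryOfGeneralSard-intro} contrapositively for every $q\in(1,(d-1)^{1/d})$ and letting $q\uparrow(d-1)^{1/d}$. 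The only nontrivial input is the existence of $\psi$ with prescribed critical values; the conceptual core of the construction is that the damping factor $(d-1)^{-k}$ precisely matches the action of $\psi$ on the $q^{k-1}$-rescaled $k$-th coordinate, making the base-$(d-1)$ arithmetic of $f_d(\crit(f_d))$ saturate the unit interval.
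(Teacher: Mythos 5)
Your construction is exactly the one the paper uses (Examples \ref{ExampleOptimality2} and \ref{ExampleOptimality3}): the same rescaled Kupka-type series $f_d(x)=\sum_k (d-1)^{-k}\psi(q^{k-1}x_k)$ built from a degree-$d$ polynomial with critical values $\{0,\dots,d-2\}$, the same base-$(d-1)$ identification of $f_d(\crit(f_d))$ with $[0,1]$, the same projection/tail estimate for the $n$--width, and the same padding $g_d(u,x)=(u,f_d(x))$ for general $m$. The argument is correct and follows the paper's proof in all essentials.
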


In all the examples above we have taken $K$ to be set of all critical points of a given function in a ball.
We now show a different construction, where the set $K$ is strictly contained in the set of critical points, more precisely is the set of points where the differential has rank zero.

\begin{example}[Rank zero counterexample to Sard]
We consider $f_d:\ell^2 \to \R$ as in \cref{ExampleOptimality2}, and $m\in \N$. We define
 $h: ( \ell^2 )^m \to \R^m$ as
\begin{equation}
h(x^1 , \dots , x^m) := (f_d(x^1), \dots , f_d(x^m) )  .
\end{equation}
For any $x =(x^1 , \dots , x^m) \in (\ell^2)^m$ 
we have
\begin{equation}
\im (D_x h) =
\im (D_{x^1} f_d) \times \dots \times \im (D_{x^m} f_d)   .
\end{equation}
Hence $x \in \crit_{\nu}(h)$ (the set of points where the rank of the differential is $\leq \nu$) if and only if 
at least $m-\nu$ components of $x$ are critical points of $f_d$.
In particular, we consider the compact set $K$ of all critical points of $h$ of rank zero, namely
\begin{equation}
K := \crit_0 (h) = \crit(f_d) ^ m.
\end{equation}
We have $K \subsetneq \crit(h)$, and
\begin{equation}
h(\crit(h) \cap K) = h(\crit_0 (h) ) = [0,1]^m  .
\end{equation}
As in the previous examples, $K$ is compact with exponential $n$-width. By taking finite-dimensional spaces $E_{nm}= \mathrm{span}\{\underbrace{(0,\dots,e_i,\dots,0)}_{j\text{-th component}} \in (\ell^2)^m \mid i =1,\dots,n,\, j=1,\dots,m\}$, for $n\in \N$ we have
\begin{equation}
	\Omega_{nm} (K , (\ell^2)^m ) 
	\leq 
	m 
	\Omega_{n}
	(\crit(f_d) , \ell^2 )
	\leq
	\frac{m \zeta}{\sqrt{2 \ln q}}
	q^{-n} \qquad \Longrightarrow\qquad \limsup_{n\to\infty}\Omega_n(K,(\ell^2)^m) \leq q^{-1/m},
\end{equation}
where $\zeta$ is the same constant appearing in \cref{ExampleOptimality2}.
\end{example}

\subsection{Sard on linear subspaces} 

Next, we prove \cref{thm:main2intro}. We will need the following lemma.

\begin{lemma}[Critical points under restrictions]\label{lem:restrandclosures}
Let $(X,\|\cdot\|)$ be a normed space and let $f: X \to \R^m$ be a $\mathcal{C}^1$ map. For any linear subspace $Y\subset X$, consider the  restriction $f|_{Y}: Y \to \R^m$. Note that the latter is a $\mathcal{C}^1$ map in the Fréchet sense from the normed vector space $(Y,\|\cdot\|)$ to $\R^m$, with $D_u (f|_{Y}) = (D_u f)|_{Y}$ for all $u\in Y$. Then, for all $\nu =0,\dots,m-1$ it holds
\begin{equation}\label{eq:critintvsrestr}
\mathrm{Crit}_{\nu}(f|_{Y}) \supseteq \mathrm{Crit}_\nu(f)\cap Y,
\end{equation}
with equality if $Y$ is dense in $X$.
\end{lemma}
\begin{proof}
The inclusion is proven by noting that, for all $u\in Y$, it holds
\begin{equation}
\mathrm{rank} \left(D_u (f|_{Y}) : Y \to \R^m\right) \leq \mathrm{rank} \left(D_u f : X\to \R^m\right).
\end{equation}
Assume that $Y$ is dense in $X$. Thus  if $\lambda \in (\R^m)^* \setminus \{0\}$ is such that 
		$0\equiv 
		\lambda \circ D_u(f|_{Y})=
		(\lambda \circ D_u f)|_{Y}$, since $Y$ is dense in $X$, it follows that 
		$\lambda \circ D_u f\equiv 0$. This means that for all $u\in Y$
		\begin{equation}
	\mathrm{rank}\left( D_u f : X\to\R^m\right) \leq \mathrm{rank}\left(D_u (f|_{Y}) :Y \to \R^m\right).
	\end{equation}
	It follows that $\mathrm{Crit}_{\nu}(f|_{Y}) \subseteq \mathrm{Crit}_{\nu}(f)\cap Y$, proving equality of the two sets.
\end{proof}

We can now prove \cref{thm:main2intro}, of which we recall the statement.

\begin{theorem}\label{thm:main2}
Let $H$ be a Hilbert space and $K \subset H$ be a compact set such that, for some $q>1$, it holds
\begin{equation}
\limsup_{n\to \infty}\Omega_n(K,H)^{1/n} \leq q^{-1}.
\end{equation}
Let $d,m \in \N$, let $\beta_0=\beta_0(d,m)>1$ be the same constant of \cref{CorollaryOfGeneralSard-intro}. Consider the (possibly non-closed) linear subspace
\begin{equation}
\V:= \mathrm{span}(K).
\end{equation}
Then for all $f\in \mathscr{P}_{d}^m(H)$ and $\nu\leq m-1$  the restriction $f|_{\V}: \V \to \R^m$ satisfies
\begin{equation}\label{eq:thm:main2-result}
\dim_{\mathcal{H}}\bigg(f\big(\mathrm{Crit}_\nu(f|_{\V})\big)\bigg)\leq \nu + \frac{\log\beta_0}{\log q}.
\end{equation}
In particular, if $q>\beta_0$, then the restriction $f|_{\V}: \V \to \R^m$ satisfies the Sard property
\begin{equation}
	\mu\bigg(f\big(\mathrm{Crit}(f|_{\V})\big)\bigg)=0.
\end{equation}
In all previous results, $f\big(\mathrm{Crit}_{\nu}(f|_{\V})\big)$ can be replaced with the smaller set $f(\mathrm{Crit}_{\nu}(f)\cap \V)$.
\end{theorem}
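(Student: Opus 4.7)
The plan is to reduce \cref{thm:main2} to \cref{CorollaryOfGeneralSard-intro} by exhausting the (possibly non-closed) subspace $\V = \mathrm{span}(K)$ with countably many compact sets that inherit the $n$--width asymptotics of $K$, and then invoking the countable stability of the Hausdorff dimension. For integers $N,M \in \N$, I would set
\begin{equation}
V_{N,M} := \left\{\sum_{i=1}^N c_i k_i \,\middle|\, k_i \in K,\ |c_i| \leq M\right\} \subseteq \V.
\end{equation}
Each $V_{N,M}$ is the continuous image of $K^N \times [-M,M]^N$, hence compact; estimating the orthogonal projection onto a near-optimal $n$--dimensional subspace for $K$ gives $\Omega_n(V_{N,M}, H) \leq NM \cdot \Omega_n(K, H)$, so $\limsup_n \Omega_n(V_{N,M},H)^{1/n} \leq q^{-1}$. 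Moreover $\V = \bigcup_{N,M \in \N} V_{N,M}$ is a \emph{countable} union.

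Next, I would switch from $\V$ to its closure $\bar\V \subseteq H$, a closed subspace (hence itself a Hilbert space). The restriction $f|_{\bar\V}$ still lies in $\mathscr{P}_d^m(\bar\V)$: any finite-dimensional subspace of $\bar\V$ is one of $H$, and $(Df)|_{\bar\V}$ remains weakly continuous and locally Lipschitz. The key observation is that for every $u \in \V$ the continuous linear map $D_u f \colon H \to \R^m$ sends $\V$ onto a finite-dimensional, and therefore closed, subspace of $\R^m$; by density of $\V$ in $\bar\V$ and continuity of $D_u f$, this forces $(D_u f)(\bar\V) = (D_u f)(\V)$, hence $\mathrm{rank}((D_u f)|_{\bar\V}) = \mathrm{rank}((D_u f)|_\V)$ for every $u \in \V$, and consequently $\mathrm{Crit}_\nu(f|_\V) = \mathrm{Crit}_\nu(f|_{\bar\V}) \cap \V$. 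Moreover, since $V_{N,M} \subseteq \bar\V$, the $n$--widths in $H$ and in $\bar\V$ coincide, as orthogonal projection from $H$ onto an $n$--dimensional subspace can only be improved by further projecting onto $\bar\V$.

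At this point \cref{CorollaryOfGeneralSard-intro} applied to $f|_{\bar\V} \in \mathscr{P}_d^m(\bar\V)$ and to the compact set $V_{N,M} \subset \bar\V$ yields $\dim_e(f(\mathrm{Crit}_\nu(f|_{\bar\V}) \cap V_{N,M})) \leq \nu + \log\beta_0/\log q$ for every $(N,M)$. Using $\dim_{\mathcal{H}} \leq \dim_e$ and the countable stability of the Hausdorff dimension, one concludes
\begin{equation}
\dim_{\mathcal{H}}\bigl(f(\mathrm{Crit}_\nu(f|_\V))\bigr) \leq \sup_{N,M \in \N} \dim_{\mathcal{H}}\bigl(f(\mathrm{Crit}_\nu(f|_{\bar\V}) \cap V_{N,M})\bigr) \leq \nu + \frac{\log\beta_0}{\log q}.
\end{equation}
For $\nu = m-1$ and $q > \beta_0$ the upper bound is strictly less than $m$, whence $\mu(f(\mathrm{Crit}(f|_\V))) = 0$; the final statement for the smaller set $\mathrm{Crit}(f) \cap \V$ is immediate from the inclusion $\mathrm{Crit}_\nu(f) \cap \V \subseteq \mathrm{Crit}_\nu(f|_\V)$. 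The main subtlety, and the one genuinely new ingredient compared with \cref{CorollaryOfGeneralSard-intro}, is the passage from $\V$ to $\bar\V$: it is the finite-dimensionality of the codomain $\R^m$ that guarantees images of continuous linear maps are closed, which is precisely what lets the rank-based criterion $\mathrm{Crit}_\nu$ transfer from $\V$ to $\bar\V$ without enlargement.
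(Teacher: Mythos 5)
Your proposal is correct and follows essentially the same route as the paper's proof: pass to the closure $\bar{\V}$, transfer $\nu$-criticality from $f|_{\V}$ to $f|_{\bar\V}$ by density (you argue via closedness of the finite-dimensional image $(D_uf)(\V)\subset\R^m$, the paper via an annihilating covector $\lambda$ -- the two are equivalent), exhaust $\V$ by countably many compacts with the same exponential $n$--width decay (your $V_{N,M}$ versus the paper's dilates $jK'$ of the balanced convex hull), apply \cref{CorollaryOfGeneralSard} to each, and conclude by countable stability of the Hausdorff dimension. All steps check out, including the width comparison $\Omega_n(\cdot,H)=\Omega_n(\cdot,\bar\V)$ via orthogonal projection.
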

\begin{proof}
	Let $X$ denote the closure of $\V$ in $H$. Observe first that
	\begin{equation}
	\Omega_n(K, H)=\Omega_n(K, X).
	\end{equation}
	The inequality $\Omega_n(K, H)\leq \Omega_n(K, X )$ is clear since every $n$-dimensional subspace of $X$ is also a subspace of $H$. For the other inequality, 
	 since $ X $ is closed in $H$, we can write $H= X \oplus X^\perp$ and denote by $\pi:H\to X$ the orthogonal projection. Then, using that $K\subset V \subset X$, we obtain
			 \begin{align}
	 \Omega_n(K,H)
		&=
		\inf_{\substack{Y\subset H \\ \dim Y = n}}
		\sup_{ u \in K}
		\inf_{ y \in Y }
		\| 
		u - y
		\|  \geq \inf_{\substack{Y\subset H \\ \dim Y = n}}
		\sup_{ u \in K}
		\inf_{ y \in Y }
		\| 
		u - \pi(y)
		\| \\
		& =\inf_{\substack{Y\subset H \\ \dim Y = n}}
		\sup_{ u \in K}
		\inf_{ z \in \pi( Y ) }
		\| 
		u - z
		\| \\
		&=
		\inf_{\substack{Z\subset X \\ \dim Z \leq n}}
		\sup_{ u \in K}
		\inf_{ z \in Z }
		\| 
		u - z
		\| \\
		&
		= \inf_{\substack{Z\subset X \\ \dim Z =n}}
		\sup_{ u \in K}
		\inf_{ z \in Z }
		\| 
		u - z
		\|=\Omega_n(K, X).
		\end{align}
We can apply \cref{CorollaryOfGeneralSard-intro} to the map $\tilde{f}\in\mathscr{P}_{d}^m( X)$, defined by $\tilde{f} :=f|_{X}$, with $f\in\mathscr{P}_{d}^m(H)$, and the compact set $K\subset X$ obtaining for all $\nu \leq m-1$
\begin{equation}
		\dim_e \bigg(\tilde{f}\big( \crit_{\nu} (\tilde{f}) \cap K \big)\bigg)
		\leq 
		\nu + \frac{\ln \beta_0}{ \ln q}.
	\end{equation}		
		By applying \cref{lem:restrandclosures} (with $Y=\V$) and noting that, on $K\subset V$, we have $ \tilde{f}=f|_{\V}$, we obtain:
\begin{equation}\label{eq:measurezeroK}
\dim_e \bigg(f\big( \crit_{\nu} (f|_{\V}) \cap K \big)\bigg) = \dim_e \bigg(\tilde{f}\big( \crit_{\nu} (\tilde{f}) \cap K \big)\bigg)\leq 
		\nu + \frac{\ln \beta_0}{ \ln q}.
\end{equation}
Let $K'$ be the balanced and convex hull of $K$. It holds:
\begin{equation}
\V=\mathrm{span}(K) = \mathrm{span}(K') = \bigcup_{j\in \N} j K'.
\end{equation}
By \cref{thmPinkus} the $n$-width of the compact, convex, centrally symmetric sets $j K'$ satisfy
\begin{equation}
\Omega_n(j K',H) = j \Omega_n(K',H)=j \Omega_n(K,H).
\end{equation}
Therefore we can repeat the above argument, obtaining \eqref{eq:measurezeroK} for $j K'$ in place of $K$, for any $j\in \N$. Furthermore, we can replace the entropy dimension with the (smaller) Hausdorff dimension, which has the advantage of being countably stable. Taking the supremum over $j\in \N$ we obtain \eqref{eq:thm:main2-result}.

To conclude, we observe that by \cref{lem:restrandclosures} we have $f(\mathrm{Crit}_\nu(f|_{\V})) \supseteq f(\mathrm{Crit}_\nu(f)\cap \V)$, which yields the last claim of the statement of the theorem.
\end{proof}

\subsection{A class of maps with the global Sard property}\label{sec:class}
Using \cref{GeneralSardTheorem-intro}, we study a class of maps from a Hilbert space to $\R^m$ for which the Sard property holds true. The case $m=1$ is not new, and it was proved in \cite{YomdinApproxCompl}, see also \cite{ComteYomdin}.
We need the following preliminary facts.
\begin{lemma}[Markov inequality]\label{MarkovLemma}
Let $p: \mathbb{R}^n \to \mathbb{R}^m$ be a polynomial map with components of degree at most $d$. Then for every $r>0$ it holds
	\begin{equation}
		\sup_{x \in B_{\R^n}(r)}
		\|D_xp\|_{\op} 
		\leq 
		\frac{\sqrt{m} d^2}{r} 
		\sup_{x \in B_{\R^n}(r)} 
		\|p (x)\| .
	\end{equation}
\end{lemma}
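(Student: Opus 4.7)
The plan is to reduce the multivariate, vector-valued Markov inequality to the classical univariate Markov inequality via two successive reductions.

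\textbf{Step 1 (reduction to scalar components).} Writing $p = (p_1, \ldots, p_m)$ with each $p_i: \R^n \to \R$ a polynomial of degree at most $d$, the operator norm of the Jacobian is controlled by the Hilbert--Schmidt norm:
\begin{equation*}
\|D_x p\|_{\op}^2 \leq \sum_{i=1}^m \|\nabla p_i(x)\|^2 \leq m \max_{1 \leq i \leq m} \|\nabla p_i(x)\|^2,
\end{equation*}
while $|p_i(x)| \leq \|p(x)\|$ componentwise. It therefore suffices to establish the scalar inequality $\sup_{B_{\R^n}(r)} \|\nabla q\| \leq (d^2/r)\, \sup_{B_{\R^n}(r)} |q|$ for every polynomial $q: \R^n \to \R$ of degree $\leq d$; the factor $\sqrt{m}$ then pops out from the componentwise bounds above.

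\textbf{Step 2 (reduction to one variable by slicing).} Fix $x_0 \in B_{\R^n}(r)$ and a unit vector $v \in \R^n$. Parametrize the chord $B_{\R^n}(r) \cap (x_0 + \R v)$ symmetrically about its midpoint: writing $c = -\langle x_0, v\rangle$ and $\rho = \sqrt{r^2 - \|x_0\|^2 + \langle x_0, v\rangle^2}$, the chord is $\{x_0 + (c+s) v : s \in [-\rho, \rho]\}$, and the function $\tilde q(s) := q(x_0 + (c+s) v)$ is a univariate polynomial of degree $\leq d$ bounded by $\sup_{B_{\R^n}(r)} |q|$. The classical univariate Markov inequality on $[-\rho, \rho]$ yields
\begin{equation*}
|\partial_v q(x_0)| = |\tilde q'(-c)| \leq \frac{d^2}{\rho}\, \sup_{B_{\R^n}(r)} |q|.
\end{equation*}
When the chord passes through the origin, i.e.\ $v$ is parallel to $x_0$ (or $x_0 = 0$), one has $\rho = r$ and the desired sharp bound follows.

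\textbf{The main obstacle} is the case in which $x_0$ lies close to $\partial B_{\R^n}(r)$ and $v$ is nearly tangent to the sphere: then $\rho \ll r$ and the naive pointwise slicing degrades. To recover the uniform constant $d^2/r$ one invokes the multivariate Markov (Kellogg) inequality on the ball, whose proof rests on the Chebyshev extremality along diameters combined with the rotational symmetry of $B_{\R^n}(r)$. As a cheap alternative, one can accept the slightly weaker estimate obtained by parametrizing the segment $\{x_0 + tv : t \in [-r, r]\} \subset B_{\R^n}(2r)$ and applying univariate Markov on $[-r, r]$; this gives $|\partial_v q(x_0)| \leq (d^2/r)\, \sup_{B_{\R^n}(2r)} |q|$, which would suffice for all downstream applications of the lemma, at the cost of losing the sharpness of the constant.
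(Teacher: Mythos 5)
Your argument is correct and, in its sharp form, is essentially the paper's proof: the paper disposes of the lemma by citing Kellogg's multivariate Markov inequality on the unit ball (\cite[Thm.\ VI]{Kellogg1928}) and rescaling $x\mapsto p(rx)$, which is exactly the step you fall back on after correctly diagnosing that pointwise slicing loses the constant for short chords near the boundary, while your Step 1 merely makes explicit the componentwise reduction producing the factor $\sqrt{m}$ that the paper leaves implicit. One small caution about your ``cheap alternative'': a version with $\sup_{B_{\R^n}(2r)}|q|$ on the right-hand side is not harmless for every downstream use, since the proof of \cref{lem:1tor} iterates the inequality $|\alpha|$ times at fixed radius $1$ to bound the Taylor coefficients, and an inequality that doubles the radius at each application would make that argument circular (one would need \cref{lem:1tor} itself to return from $B_{\R^n}(2^{|\alpha|})$ to the unit ball, and with an $n$-dependence that the paper is explicitly tracking).
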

\begin{proof}
From \cite[Thm.\ VI]{Kellogg1928} we get the thesis for $r=1$. Then, we conclude considering the rescaled polynomial 
$x \to p(rx)$.
\end{proof}
\begin{lemma}[Estimates on balls at different radii]\label{lem:1tor}
Let $p : \R^n \to \R^m$ be a polynomial map with components of degree at most $d$. Then for all $r>0$ it holds
\begin{equation}
\sup_{x \in B_{\R^n}(r)} \|p(x)\| \leq \alpha(d,m) n^d (1+r)^d \sup_{x \in B_{\R^n}(1)} \|p(x)\|,
\end{equation}
where $\alpha(d,m)=m^{d/2} d^{2d}(d+1)$ is a constant depending only on $d$ and $m$.
\end{lemma}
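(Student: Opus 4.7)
The plan is to reduce to the scalar case and then combine iterated applications of the Markov inequality (\cref{MarkovLemma}) with the Taylor expansion of $p$ at the origin. Since $|p_i(x)| \leq \|p(x)\|$ for each component and $\|p(x)\| \leq \sqrt{m}\,\max_i |p_i(x)|$, it suffices to prove the analogous bound
\begin{equation}
\sup_{B_{\R^n}(r)}|q| \;\leq\; d^{2d}(d+1)\, n^d (1+r)^d \sup_{B_{\R^n}(1)}|q|
\end{equation}
for a scalar polynomial $q\colon \R^n \to \R$ of degree $\leq d$: passing from $q$ to $p$ costs a factor of $\sqrt{m}$, which is absorbed in $m^{d/2}$ for $d \geq 1$.

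To bound all partial derivatives of $q$ at the origin, I would apply \cref{MarkovLemma} with $m=1$ to the polynomial $\partial^{\beta}q$, which has degree $\leq d - |\beta|$. This gives $\sup_{B_{\R^n}(1)}\|\nabla \partial^{\beta} q\| \leq (d-|\beta|)^2 \sup_{B_{\R^n}(1)}|\partial^{\beta} q|$, and since $|\partial_i \partial^{\beta} q| \leq \|\nabla \partial^{\beta} q\|$ pointwise, iterating from $|\beta|=0$ up to $|\beta|=|\alpha|-1$ yields, by induction on $|\alpha|$,
\begin{equation}
|\partial^{\alpha} q(0)| \;\leq\; \sup_{B_{\R^n}(1)}|\partial^{\alpha} q| \;\leq\; \left(\tfrac{d!}{(d-|\alpha|)!}\right)^{2} \sup_{B_{\R^n}(1)} |q|, \qquad |\alpha| \leq d.
\end{equation}

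I would then plug these estimates into the (finite) Taylor formula $q(x) = \sum_{|\alpha|\leq d} \frac{\partial^{\alpha} q(0)}{\alpha!}\, x^{\alpha}$, use $|x^{\alpha}| \leq \|x\|^{|\alpha|} \leq r^{|\alpha|}$ on $B_{\R^n}(r)$, and sum via the multinomial identity $\sum_{|\alpha|=k} 1/\alpha! = n^k/k!$ to obtain
\begin{equation}
\sup_{B_{\R^n}(r)}|q| \;\leq\; \sup_{B_{\R^n}(1)}|q|\,\sum_{k=0}^{d} \left(\tfrac{d!}{(d-k)!}\right)^{2} \frac{n^{k} r^{k}}{k!} \;\leq\; d^{2d}\,\sup_{B_{\R^n}(1)}|q|\,\sum_{k=0}^{d} (nr)^{k}.
\end{equation}
An elementary case split ($nr \geq 1$ versus $nr < 1$), using $n \geq 1$, gives $\sum_{k=0}^{d} (nr)^{k} \leq (d+1)\, n^{d} (1+r)^{d}$; this closes the scalar bound and yields the lemma with the stated constant $\alpha(d,m) = m^{d/2} d^{2d}(d+1)$.

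The whole argument is essentially bookkeeping once \cref{MarkovLemma} is in hand; the only (minor) obstacle is matching the precise form of $\alpha(d,m)$, which is achieved through the slack estimates $(d!/(d-k)!)^{2} \leq d^{2d}$, $1/k! \leq 1$, $\sqrt{m} \leq m^{d/2}$ (for $d \geq 1$), and the elementary bound on $\sum_{k} (nr)^{k}$ used above.
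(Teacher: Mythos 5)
Your argument is correct and is essentially the paper's own proof: both iterate the Markov inequality of \cref{MarkovLemma} to control the Taylor coefficients of $p$ at the origin and then sum the monomial contributions over $B_{\R^n}(r)$, arriving at the same constant $\alpha(d,m)=m^{d/2}d^{2d}(d+1)$. The only differences are organizational — you reduce to the scalar case and use the multinomial identity $\sum_{|\alpha|=k}1/\alpha!=n^k/k!$, whereas the paper bounds the vector-valued coefficients $c_\alpha$ directly and counts the multi-indices by $(d+1)n^d$ — and these do not change the substance.
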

\begin{proof}
We can write the polynomial map as
\begin{equation}
p(x) = \sum_{|\alpha| \leq d} c_\alpha x^\alpha,
\end{equation}
where $c_\alpha \in \R^m$, and the sum is over the multi-indices $\alpha \in \N^n$ with $|\alpha| = \sum_{i} \alpha_i \leq d$. Since $|\alpha|!c_\alpha = \partial_{x^\alpha} p|_{x=0}$, we can iterate Markov inequality (for $r=1$, see \cref{MarkovLemma}) and obtain
\begin{equation}
\|c_\alpha\| \leq \frac{1}{|\alpha|!} m^{|\alpha|/2} d^{2|\alpha|} \sup_{x \in B_{\R^n}(1)} \|p(x)\| \leq m^{|\alpha|/2} d^{2|\alpha|} \sup_{x \in B_{\R^n}(1)} \|p(x)\|.
\end{equation}
Therefore, for $r>1$, we have
\begin{align}
\sup_{x \in B_{\R^n}(r)} \|p(x)\| & = \sup_{x \in B_{\R^n}(1)} \|p(r x)\| \\
& = \sup_{x \in B_{\R^n}(1)} \left\| \sum_{i=1}^d r^i \left( \sum_{|\alpha|=i} c_\alpha x^\alpha \right)\right\| \\
& \leq (1+r)^d \left(\sum_{|\alpha| \leq d} m^{|\alpha|/2} d^{2|\alpha|}\right) \sup_{x \in B_{\R^n}(1)} \|p(x)\| \\
& \leq (1+r)^d m^{d/2} d^{2d}(d+1)n^d \sup_{x \in B_{\R^n}(1)} \|p(x)\|,
\end{align}
where we used the fact that the number of multi-indices $\alpha \in \N^n$ with $|\alpha|\leq d$ is equal to $(1 + n + n^2 + \dots + n^d) \leq (d+1)n^d$
\end{proof}

For the rest of this section, we assume that the Hilbert space $H$ is separable, and we fix a Hilbert basis $\{e_i\}_{i \in \N} \subset H$. For $k \in \N$, we set $E_k :=\mathrm{span}\{e_1, \ldots, e_k\} $ and we denote by $\pi_k: H  \to E_k $ the corresponding orthogonal projection, that is if  $x = \sum_{i} x_i e_i$, then $\pi_k (x) = (x_1 , \dots , x_k)$. We now define a special class of maps in this setting.

\begin{proposition}[Construction of special maps]\label{RegolarSeriePolinomiali}
Let $H$ be a separable Hilbert space. For all $k \in \N$, let $p_k: E_k \to \R^m$ be polynomial maps with 
	$\sup_{k \in \N} \deg p_k \leq d$ for some $d \in \N$,
and such that
	\begin{equation}\label{eq:seriesconv}
	\sum_{k=1}^{\infty} \sup_{x \in B_{E_k} (r)} \|p_k(x)\| < 	 \infty , \qquad \forall\, r>0.
	\end{equation}
Then the map $f: H \to \mathbb{R}^m $ given by 
	\begin{equation}\label{eq:deffserie}
	f(x):= \sum_{k=1}^{\infty} p_k(x_1 , \dots,  x_k), \qquad \forall\, x \in H,
	\end{equation}
	is well--defined, $f\in \mathscr{P}_{d}^m(H)$ (see \cref{def:polyH}) and its differential is given by
	\begin{equation}\label{DifferentialeSeriePolinomi}
		D_xf=\sum_{k=1}^{\infty} D_{x}(p_k \circ \pi_k) .
	\end{equation}
\end{proposition}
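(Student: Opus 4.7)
The plan is to derive everything from the Markov inequality (Lemma~\ref{MarkovLemma}), which will let me convert the uniform bounds \eqref{eq:seriesconv} on $p_k$ into matching uniform bounds for all derivatives of $p_k$. First, I would verify that $f$ is well defined: for $x\in B_H(r)$ we have $\pi_k(x)\in B_{E_k}(r)$, hence $\|p_k(\pi_k(x))\|\leq \sup_{B_{E_k}(r)}\|p_k\|$, and \eqref{eq:seriesconv} gives normal convergence of the series defining $f$ on every ball of $H$. Applying \cref{MarkovLemma} to each $p_k$ on $B_{E_k}(r)$ yields
\begin{equation}
\sup_{y\in B_{E_k}(r)}\|D_y p_k\|_{\op}\leq \frac{\sqrt{m}\,d^{2}}{r}\sup_{y\in B_{E_k}(r)}\|p_k(y)\|,
\end{equation}
which is again summable in $k$. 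Since $D_x(p_k\circ\pi_k)=(D_{\pi_k(x)}p_k)\circ\pi_k$ and $\|\pi_k\|_{\op}=1$, the formally differentiated series $\sum_k D(p_k\circ\pi_k)$ converges normally in operator norm on every $B_H(r)$. The standard termwise-differentiation theorem for uniformly convergent series of $C^1$ maps between Banach spaces then gives that $f$ is Fréchet $C^1$ on $H$ and that its differential is indeed \eqref{DifferentialeSeriePolinomi}.

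Next I would verify that $f|_E$ is polynomial of degree $\leq d$ for every finite-dimensional subspace $E\subset H$. Each partial sum $\sum_{k=1}^{N}(p_k\circ\pi_k)|_E$ is a polynomial of degree $\leq d$ in coordinates of $E$, so it lies in the finite-dimensional space $\mathcal{P}_d(E,\R^m)$ of such polynomials. By the previous step, the partial sums converge uniformly on bounded subsets of $E$ to $f|_E$; in a finite-dimensional normed space all norms are equivalent, so uniform convergence on any ball forces convergence in $\mathcal{P}_d(E,\R^m)$, and the limit $f|_E$ therefore belongs to $\mathcal{P}_d(E,\R^m)$.

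It remains to check the two additional requirements built into \cref{def:polyH}: weak continuity and local Lipschitz continuity of $Df$. For weak continuity, if $x_n\rightharpoonup x$ in $H$, then by Banach--Steinhaus the sequence $\{x_n\}$ is bounded, say $\|x_n\|\leq R$, and for each $k$ the projection $\pi_k(x_n)\to\pi_k(x)$ strongly in the finite-dimensional $E_k$; by continuity of $Dp_k$ each term $D_{x_n}(p_k\circ\pi_k)\to D_x(p_k\circ\pi_k)$ in operator norm. The Markov bound gives a summable majorant $\sup_n\|D_{x_n}(p_k\circ\pi_k)\|_{\op}\leq \frac{\sqrt{m}d^2}{R}\sup_{B_{E_k}(R)}\|p_k\|$ independent of $n$, so a dominated-convergence-style argument shows $D_{x_n}f\to D_xf$ in operator norm, which is stronger than weak continuity. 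For local Lipschitz continuity of $Df$, I would apply \cref{MarkovLemma} once more, this time to the polynomial $Dp_k$ of degree $\leq d-1$, to control $D^2 p_k$ uniformly on balls; this produces summable Lipschitz constants for $D(p_k\circ\pi_k)$ on every $B_H(r)$, yielding the desired local Lipschitz property of $Df$.

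The only mildly delicate point I anticipate is the interplay between the different applications of Markov and the M-test, particularly that the local Lipschitz argument requires bounding $D^2p_k$ on a ball slightly larger than the one on which Lipschitz continuity is claimed; this is purely bookkeeping and introduces no conceptual obstacle.
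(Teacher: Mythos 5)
Your proposal is correct, and for every step except the local Lipschitz continuity of $Df$ it follows essentially the same route as the paper: locally normal convergence of the series and of the termwise-differentiated series via \cref{MarkovLemma}, a termwise-differentiation theorem to obtain $\mathcal{C}^1$-regularity and \eqref{DifferentialeSeriePolinomi}, a splitting/domination argument for weak continuity (using that the finite-rank projections $\pi_k$ turn weak convergence into strong convergence), and convergence inside the finite-dimensional space of degree-$d$ polynomial maps for the restriction to finite-dimensional subspaces. The genuine divergence is the Lipschitz step. You bound $D^2p_k$ by a second application of Markov; the paper avoids second derivatives altogether by writing $D_zp_k-D_wp_k=D_z(p_k-\tilde p_k)$ with $\tilde p_k(\cdot):=p_k(\cdot-z+w)$, applying Markov once to the degree-$d$ map $p_k-\tilde p_k$ (still with values in $\R^m$), and then using the Lipschitz bound for $p_k$ itself on the enlarged ball $B_{E_k}(3r)$. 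Your route does work, but one point needs care: \cref{MarkovLemma} is stated for maps into $\R^m$, and if you apply it literally to $Dp_k$ viewed as a map into $\mathcal{L}(E_k,\R^m)\simeq\R^{km}$ you pick up the constant $\sqrt{km}\,d^2/r$, i.e.\ a spurious factor $\sqrt{k}$ growing with the index, and \eqref{eq:seriesconv} gives summability of $\sup_{B_{E_k}(r)}\|p_k\|$ but not of $\sqrt{k}\,\sup_{B_{E_k}(r)}\|p_k\|$. The fix is to apply Markov to the directional derivatives $\partial_v p_k$ for unit vectors $v$, which are degree-$\leq d$ polynomial maps into $\R^m$; taking the supremum over $v$ yields $\sup_{B_{E_k}(r)}\|D^2p_k\|\leq \frac{m d^4}{r^2}\sup_{B_{E_k}(r)}\|p_k\|$ with no dimension factor, and the mean value inequality on the convex ball then gives summable Lipschitz constants. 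With this formulation you in fact stay on the same ball, so the radius enlargement you anticipated is not needed for your argument — it is the paper's translation trick that requires it.
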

\begin{proof}
For a Banach spaces $(X,\|\cdot\|_X)$, $(Y,\|\cdot\|_Y)$, and a closed set $U\subseteq X$, we denote by
$\mathcal{B}
(
U, Y)$
the space of all continuous and bounded functions $f: U \to Y$. It is a Banach space when endowed with the norm $\sup_{x \in U}
\norm{f(x)}_{Y}$.

We first note that the assumption \eqref{eq:seriesconv} implies that the sequence of maps $f_n:H \to \R^m$ given by $f_n:= \sum_{k=1}^n p_k\circ \pi_k$ is Cauchy in $\mathcal{B}(B_{H}(r),\R^m)$ for all $r>0$. In particular the right hand side of \eqref{eq:deffserie} converges locally uniformly. For any $n \in N$ we have 
	\begin{equation}\label{eq:dxfn}
		D_x f_n = 
		\sum_{k=1}^n 
		D_{x}( p_k \circ \pi_k ) .
	\end{equation}
	Furthermore,
	$x \mapsto D_x f_n \in 
	\mathcal{B}
	(
	B_{H}(r);  
	\mathcal{L} (H   , \R^m )
	)$ for all $n \in \N$, $r>0$. We now prove that it is a Cauchy sequence.
	Indeed, using \cref{MarkovLemma}, for any 
	$n_2 \geq n_1$
	\begin{equation}
		\sup_{x \in B_{H} (r) } 
		\norm{D_x f_{n_2} 
			- D_x f_{n_1}}_{\op}
	 \leq
		\frac{\sqrt{m} d^2}{r} 
		\sum_{k>n_1}
		\sup_{x \in B_{E_k}(r)}  
		\|p_k (x) \|	
		\to 0 ,
	\end{equation}
	as $n_2, n_1 \to \infty$. Therefore, $\{D f_n\}_{n \in \N}$ has a limit in $\mathcal{B}
	(
	B_{H}(r),  
	\mathcal{L} (H , \R^m )
	)$, for all $r>0$. It follows that $f$ is $\mathcal{C}^1$ and \eqref{DifferentialeSeriePolinomi} holds, see e.g.\ \cite[Thm.\ 9.1]{LangRealandFunct}.
	
	Thanks to \eqref{DifferentialeSeriePolinomi}, we prove that $Df : H \to \mathcal{L}(H,\R^m)$ is weakly continuous. 
	Let us consider a sequence $\{x^j\}_{j\in \N}$ weakly convergent to $x \in B_{H } (r) $; we prove that $D_{x^j} f \to D_x f$ as $j \to \infty$.
	Indeed, given $\varepsilon >0$ we fix $N_{\varepsilon} \in \mathbb{N}$ such that $\sum_{k>N_\varepsilon} \sup_{B_{E_k} (r)} \|p_k\| \leq  \varepsilon$ and, by \cref{MarkovLemma}, we get
	\begin{align}
			\norm{D_{x^j}f - D_xf}_{\op}
			& \leq
			\sum_{k=1}^{\infty}
			\norm{D_{x^j} (p_k \circ \pi_k) - D_{x} (p_k \circ \pi_k) }_{\op} \\
			& =
			\sum_{k=1}^{\infty}
			\norm{(D_{\pi_k (x^j)} p_k) \circ \pi_k - (D_{\pi_k (x)} p_k) \circ \pi_k }_{\op} \\
			& =
			\sum_{k=1}^{\infty}
			\norm{ D_{\pi_k (x^j)} p_k - D_{\pi_k (x)} p_k}_{\op} \label{eq:terzaoperator}\\
			& \leq
			\sum_{k=1}^{N_{\varepsilon}}
			\norm{ D_{\pi_k (x^j)} p_k - D_{\pi_k (x)} p_k}_{\op} 
			+
			\frac{\sqrt{m} d^2}{r} 
			2\varepsilon,
	\end{align}
	where with some abuse of notation we denoted by the same symbol $\|\cdot\|_{\op}$ either the norm in $\mathcal{L}(H,\R^m)$ or the one for the finite-dimensional subspaces $\mathcal{L}(E_k,\R^m)$. Since $\pi_k(x^j) \to \pi_k(x)$ as $j \to \infty$ for all $k \in \N$, we conclude by the continuity of $D p_k$ on $E_k$.

	{
	We prove now that $Df : H\to \mathcal{L}(H,\R^m)$ is locally Lipschitz. We will prove that the sequence $\{D f_n\}_{n\in \N}$ defined by \eqref{eq:dxfn} is uniformly Lipschitz on balls. For $k\in \N$, fix $z,w \in E_k$. Then it holds
	\begin{equation}\label{eq:shift}
	D_z p_k - D_w p_k = D_z p_k - D_z \tilde{p}_k  = D_z (p_k - \tilde{p}_k),
	\end{equation}
	where $\tilde{p}_k : E_k \to \R^m$, defined by $\tilde{p}_k(\cdot) := p_k(\cdot - z+ w)$, is polynomial depending on the fixed choices of $z,w$, with degree bounded by $d$. Thus for all fixed $z,w \in B_{E_k}(r)$, $r>0$, it holds
	\begin{align}
		\|D_z p_k - D_w p_k\|_{\op} & \leq \sup_{a \in B_{E_k}(r)} \| D_a (p_k - \tilde{p}_k)\|_{\op} & \text{by \eqref{eq:shift}} \\
		& \leq \frac{\sqrt{m}d^2}{r}\sup_{a\in B_{E_k}(r)} \|p_k(a) - \tilde{p}_k(a)\| & \text{by \cref{MarkovLemma}}\\
		& \leq \frac{\sqrt{m}d^2}{r}\sup_{a\in B_{E_k}(r)} \|p_k(a) - p_k(a-z+w)\|  & \text{by definition of $\tilde{p}_k$} \\
		& \leq \left(\frac{\sqrt{m}d^2}{r}\sup_{a\in B_{E_k}(3r)} \|D_a p_k\|_{\op}\right) \|z-w\| & \text{$p_k$ is Lipschitz}  \\
		& \leq \left(\frac{m d^4}{r^2}\sup_{a\in B_{E_k}(3r)} \|p_k(a)\|\right) \|z-w\|. & \text{by \cref{MarkovLemma}}\label{eq:lipestimate}
		\end{align}
		Note that if $x \in B_H(r)$ then $\pi_k(x) \in B_{E_k}(r)$. Thus, for all $x,y\in B_H(r)$ it holds
\begin{align}
\|D_x f_n - D_y f_n\|_{\op} & \leq \sum_{k=1}^n \|D_x (p_k\circ \pi_k)- D_y (p_k\circ \pi_k)\|_{\op} & \text{by \eqref{eq:dxfn}}\\
& \leq \sum_{k=1}^n \|D_{\pi_k(x)} p_k - D_{\pi_k(y)} p_k\|_{\op} & \text{by Leibniz rule} \\
& \leq   \frac{m d^4}{r^2} \sum_{k=1}^n  \sup_{a\in B_{E_k}(3r)} \|p_k(a)\| \|\pi_k(x)-\pi_k(y)\| & \text{by \eqref{eq:lipestimate}}\\
& \leq \frac{m d^4}{r^2}  \|x-y\|  \sum_{k=1}^\infty \sup_{a\in B_{E_k}(3r)} \|p_k(a)\|& \text{$\pi_k$ is $1$-Lipschitz}\\
& \leq c(r) \|x-y\|, 
\end{align}
where $c(r)>0$ does not depend on $n$, and in the last inequality we used \eqref{eq:seriesconv} for the convergence of the series. Therefore the sequence $\{D f_n\}_{n\in \N}$, is uniformly Lipschitz on every ball $B_H(r)$. Thus its limit $D f : H \to \mathcal{L}(H,\R^m)$ is Lipschitz on any ball $B_H(r)$, and a fortiori locally Lipschitz.
}
	
	Finally, we show that for every finite dimensional space $E\subset H$, the map $f|_E$ is a polynomial map with components of degree bounded by $d$. Let $n=\mathrm{dim}(E)$ and fix a linear isometry $L:\R^n\to E$. The statement is equivalent to show that $f\circ L$ is a polynomial map with components of degree bounded by $d$. Denoting by $(t_1, \ldots, t_n)$ the standard coordinates in $\R^n$, we have
	\begin{align}
	f(L(t_1, \ldots, t_n))& =\sum_{k= 1}^\infty p_k(x_1(t_1, \ldots, t_n) , \dots, x_k(t_1, \ldots, t_n))\\
	&=\sum_{k= 1}^\infty \tilde{p}_k(t_1, \ldots, t_n),
	\end{align}
	where each $\tilde{p}_k$ is a polynomial in $(t_1, \ldots, t_n)$ of degree at most $d$ (since the $x_i$ depend linearly on the $t_j$). Condition \eqref{eq:seriesconv} guarantees that, for each monomial $t_1^{\alpha_1}\cdots {t_n}^{\alpha_n}$, its coefficient in the expansion $\sum_{k\leq s}\tilde{p}_k$ converges as $s\to\infty$. Note that there are only a finite number of such coefficients, determined by the upper bound $d$ on the degree and the dimension $n$. We conclude by noting that the uniform limit of polynomials with degree  bounded by $d$ is a polynomial with degree bounded by $d$.
\end{proof}

With the following we exhibit a special class of maps, obtained via the construction of \cref{RegolarSeriePolinomiali}, for which the Sard property holds true \emph{globally}. This is the content of \cref{thm:main3intro} in the Introduction, of which we recall the statement here.

\begin{theorem}\label{thm:sardseries}
Let $H$ be a separable Hilbert space. For all $k \in \N$, let $p_k: E_k \to \R^m$ be polynomial maps with 
	$\sup_{k \in \N} \deg p_k \leq d$ for some $d \in \N$, and such that
	\begin{equation}\label{eq:boundball1}
\sup_{x \in B_{E_k} (1)} 
\|p_k (x)\|
\leq  q^{-k}, \qquad \forall\, k \in \N,
\end{equation}
for some $q>1$. Then the map $f: H \to \mathbb{R}^m$ defined by
\begin{equation}
f(x):= \sum_{k=1}^{\infty} p_k(x_1 , \dots ,x_k), \qquad \forall \, x \in H,
\end{equation}
is well--defined, $f \in \mathscr{P}_d^m(H)$ (see \cref{def:polyH}), and for all $\nu \leq m-1$ and $r>0$ it holds
\begin{equation}\label{eq:entropyestimatemaps}
	\dim_e \bigg(f\left( \crit_{\nu} (f) 
	\cap 
	B_{H}(r)  \right)\bigg)
	\leq 
	\nu + 
	 \frac{\ln \beta_0}{ \ln q},
\end{equation}
where $\beta_0 =\beta_0(d,m)>1$ is the same constant given by \cref{GeneralSardTheorem-intro}. In particular, if $q> \beta_0$, then $f$ satisfies the Sard property globally on $H$:
\begin{equation}
\mu\bigg(f\big(\crit (f) \big)\bigg)=0.
\end{equation}
\end{theorem}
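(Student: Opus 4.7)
The plan is to invoke Theorem~\ref{GeneralSardTheorem} on each ball $K = B_H(r)$, using the natural partial sums as finite-dimensional polynomial approximants, and then globalize by countable additivity. First, to make sense of $f$ and check $f \in \mathscr{P}_d^m(H)$, I would apply Lemma~\ref{lem:1tor} to the unit-ball bound \eqref{eq:boundball1} to obtain
\[
\sup_{x \in B_{E_k}(r)} \|p_k(x)\| \leq \alpha(d,m)\, k^d (1+r)^d q^{-k}, \qquad \forall\, k \in \N,\ r>0,
\]
which is summable in $k$ since $q > 1$. This is precisely hypothesis \eqref{eq:seriesconv} of Proposition~\ref{RegolarSeriePolinomiali}, from which well-definedness, membership in $\mathscr{P}_d^m(H)$, and the formula \eqref{DifferentialeSeriePolinomi} for $Df$ follow at once.

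\textbf{Sard on a ball.} Next I would fix $r > 0$, set $K = B_H(r)$, and take as approximants the polynomials $f_n : E_n \to \R^m$ defined by $f_n(y) := \sum_{k=1}^n p_k(y_1, \dots, y_k)$, which have degree $\leq d$. The key observation to verify the approximation hypothesis \eqref{ipotesiMainTheorem} is that $\pi_k \circ \pi_{E_n} = \pi_k$ for $k \leq n$, so using \eqref{DifferentialeSeriePolinomi} the first $n$ contributions to $(D_x f)|_{E_n}$ exactly match $D_{\pi_{E_n}(x)} f_n$, leaving
\[
\bigl((D_x f)|_{E_n} - D_{\pi_{E_n}(x)} f_n\bigr)[v] = \sum_{k>n} D_{\pi_k(x)} p_k[v], \qquad v \in E_n,
\]
where I used $\pi_k(v) = v$ for $v \in E_n \subset E_k$. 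Bounding each term by the Markov inequality (Lemma~\ref{MarkovLemma}) applied to $p_k$ on $B_{E_k}(r)$, together with the analogous (simpler) tail bound for $\|f(x) - f_n(\pi_{E_n}(x))\|$, yields
\[
\sup_{x \in K}\Bigl(\|f(x) - f_n(\pi_{E_n}(x))\| + \|(D_x f)|_{E_n} - D_{\pi_{E_n}(x)} f_n\|_{\op}\Bigr) \leq C(d,m,r) \sum_{k>n} k^d q^{-k} =: C_n.
\]
Since $\limsup_n C_n^{1/n} = q^{-1}$, the remark following Theorem~\ref{GeneralSardTheorem} permits replacing $c q^{-n}$ by $C_n$, and its conclusion delivers exactly \eqref{eq:entropyestimatemaps} on $K = B_H(r)$, with constants independent of $r$.

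\textbf{Globalization and main difficulty.} If $q > \beta_0(d,m)$, then for every $r > 0$ one has $\dim_e f(\crit(f) \cap B_H(r)) < m$, hence $\mu(f(\crit(f) \cap B_H(r))) = 0$. Writing $H = \bigcup_{n \in \N} B_H(n)$ and using countable additivity of Lebesgue measure gives $\mu(f(\crit(f))) = 0$. The main obstacle is the verification of the differential part of \eqref{ipotesiMainTheorem}: one must exploit the triangular projection structure $\pi_k \circ \pi_{E_n} = \pi_k$ (for $k \leq n$) to cancel the first $n$ terms and reduce the difference to an exponentially small tail, controlled by Markov's inequality. A naive term-by-term estimate would not be good enough to match the rate required by Theorem~\ref{GeneralSardTheorem}.
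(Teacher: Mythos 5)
Your proposal is correct and follows essentially the same route as the paper: Lemma~\ref{lem:1tor} plus Proposition~\ref{RegolarSeriePolinomiali} for well-definedness and membership in $\mathscr{P}_d^m(H)$, the partial sums $f_n=\sum_{k\leq n}p_k\circ\pi_k$ as approximants on $K=B_H(r)$ with the tail controlled via Markov's inequality, and globalization over the balls $B_H(n)$. Your explicit appeal to the remark after Theorem~\ref{GeneralSardTheorem} (replacing $cq^{-n}$ by $C_n\sim n^d q^{-n}$ with $\limsup_n C_n^{1/n}=q^{-1}$) is in fact a slightly cleaner way to close the estimate than the paper's final ``elementary estimates'' step.
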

\begin{proof}
We show first that the map $f$ is well--defined. The upper bound \eqref{eq:boundball1}, together with \cref{lem:1tor} imply that for all $r>0$
\begin{equation}\label{eq:upperboundallr}
\sup_{x \in B_{E_k} (r)} 
\|p_k (x)\|
\leq  \alpha(d,m)(1+r)^d  \frac{k^d}{q^k} , \qquad \forall\, k \in \N,
\end{equation}
with $q>1$. In turn, this implies that
\begin{equation}
\sum_{k=1}^\infty \sup_{x \in B_{E_k} (r)} 
\|p_k (x)\| < \infty, \qquad \forall\, r>0.
\end{equation}
We can then apply \cref{RegolarSeriePolinomiali} obtaining that $f \in \mathscr{P}_d^m(H)$.

Fix $r>0$. We now prove that $f$ satisfies the hypothesis of \cref{GeneralSardTheorem-intro} with $B = B_{H}(r)$ and polynomial approximating maps $f_n: E_n \to \R^m$
\begin{equation}
f_n(x):= \sum_{k=1}^n p_k(x_1,\dots,x_k), \qquad \forall\, x \in H.
\end{equation}
Note that, by our assumptions, $\sup_{n \in \N} \deg f_n \leq d$. It remains to check the validity of \eqref{ipotesiMainTheorem-intro}. Firstly, we have for all $n \in \N$
\begin{equation}
\sup_{x\in B} 
\|f(x) - f_n \circ \pi_{E_n} (x)\|
\leq \alpha(d,m) (1+r)^d
\sum_{k = n+1}^{\infty} \frac{k^d}{q^k},
\end{equation}
where we used \eqref{eq:upperboundallr}.

We now estimate the derivatives. We have
\begin{align}
\sup_{x \in 
			B }	\norm{\left(D_x f\right)|_{E_{n}} - D_{\pi_{E_n}(x)}f_n }_{\op} 
& \leq \sup_{x \in 
		B }	\norm{D_x f - D_{x}(f_n\circ\pi_{E_n}) }_{\op}  \\
	& \leq 
	\sup_{x \in 
		B }
	\sum_{k=n+1}^{\infty} 
	\norm{ D_{ x }  (p_k \circ \pi_k)  	}_{\op} & \text{by \eqref{DifferentialeSeriePolinomi}}\\
& =	\sup_{x \in 
		B }
	\sum_{k=n+1}^{\infty} 
	\norm{ (D_{ \pi_k(x) }  p_k) \circ \pi_k  	}_{\op} \\
	& \leq 
	\sum_{k=n+1}^{\infty}  \sup_{x \in 
		B_{E_k} (r) }
	\norm{ D_{ x }  p_k	}_{\op}.	
\end{align}
Note that, in the last line, the supremum is taken on a finite-dimensional Euclidean ball. To proceed, by \cref{MarkovLemma} and \eqref{eq:upperboundallr} we have 
\begin{equation}\label{eq:Markovinproof}
\sup_{x \in B_{E_k} (r)} 
\norm{D_x p_k }_{\op} 
\leq  
\frac{d^2 \sqrt{m}}{r}\alpha(d,m) (1+r)^d \frac{k^d}{q^k}.
\end{equation}
Therefore, continuing the previous estimate, we obtain
\begin{align}
\sup_{x \in 
			B }	\norm{\left(D_x f\right)|_{E_{n}} - D_{\pi_{E_n}(x)}f_n }_{\op}  & \leq \frac{d^2\sqrt{m}}{r}\alpha(d,m) (1+r)^d \sum_{k=n+1}^\infty \frac{k^d}{q^k} .
\end{align}
We have therefore proved that for all $n \in \N$, and $r>0$ it holds
\begin{equation}
		\sup_{x \in B}
		\bigg(\|f(x)  - f_n \circ \pi_{E_n} (x) \|
		+
		\|\left(D_x f\right)|_{E_{n}} - D_{\pi_{E_n}(x)}f_n \|_{\op} \bigg)
		\leq 
		\left(1+\frac{d^2\sqrt{m}}{r}\right)\alpha(d,m) (1+r)^d \sum_{k=n+1}^\infty \frac{k^d}{q^k}.
\end{equation}
To conclude, note that by elementary estimates it holds
\begin{equation}
\left(1+\frac{d^2\sqrt{m}}{r}\right)\alpha(d,m) (1+r)^d \sum_{k=n+1}^\infty \frac{k^d}{q^k}\leq c q^{-n},
\end{equation}
for sufficiently large $n$, where $c\geq 1$ is a constant depending only on the fixed parameters $r,d,m$. Therefore, the main assumption \eqref{ipotesiMainTheorem-intro} of \cref{GeneralSardTheorem-intro} is satisfied, yielding
\begin{equation}
		\dim_e\bigg( f\big( \crit_{\nu} (f) \cap B_H(r) \big)\bigg) 
		\leq 
		\nu + \frac{\ln \beta_0}{ \ln q}  , \qquad \forall\, \nu = 0,\dots,m-1.
	\end{equation}
In particular, if $q > \beta_0$ then $f$ satisfies the Sard property on $B_H(r)$, namely
\begin{equation}
\mu\bigg(f\big(\crit (f) \cap B_H(r)\big)\bigg)=0.
\end{equation}
Since $r>0$ is arbitrary, we obtain that $\mu(f(\crit(f)))=0$.
\end{proof}

\subsection{Sard threshold on compacts}

We can now prove \cref{thm:mainintro}, of which we recall the statement.
	\begin{theorem}\label{thm:main}
	For every $ d,m\in \mathbb{N}$, there exists $\omega_0(d,m)\in (0,1]$ such that
	\begin{enumerate}[(i)]
	\item \label{item:1main} for every $f\in \mathscr{P}_{d}^m(H)$ and for every compact set $K\subset H$ with $\omega(K,H)<\omega_0(d,m)$,
	\begin{equation}
	\mu\bigg(f\left(\mathrm{crit}(f)\cap K\right)\bigg)=0;
	\end{equation}
	\item \label{item:2main} for every $\omega>\omega_0(d,m)$, with $\omega \in (0,1]$, there exist $f\in \mathscr{P}_{d}^m(H)$ and a compact set $K\subset H$ with $\omega(K,H)=\omega$ and such that
	\begin{equation}
	\mu\bigg(f\left(\mathrm{crit}(f)\cap K\right)\bigg)>0.
	\end{equation}
	\end{enumerate}
	\end{theorem}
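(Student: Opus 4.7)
I would define
\begin{equation}
\omega_0(d,m) := \sup\{\omega \in [0,1] : \text{for every } H,\ K \subset H \text{ compact with } \omega(K,H) \le \omega,\ f \in \mathscr{P}_d^m(H),\ \mu(f(\crit(f) \cap K)) = 0\}.
\end{equation}
By \cref{CorollaryOfGeneralSard-intro} every $\omega < 1/\beta_0(d,m)$ lies in the defining set, so $\omega_0 \geq 1/\beta_0 > 0$ and hence $\omega_0 \in (0, 1]$. Part \eqref{item:1main} is then immediate: given $\omega(K,H) < \omega_0$, any $\omega'$ with $\omega(K,H) < \omega' < \omega_0$ belongs to the defining set, so Sard holds on $K$.

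For part \eqref{item:2main} I would construct, for each $\omega > \omega_0$, a counterexample of width exactly $\omega$. When $\omega \in ((d-1)^{-1/d}, 1)$, I would use the Kupka map of \cref{ExampleOptimality2} (extended as in \cref{ExampleOptimality3} to codomain $\R^m$) with parameter $q := 1/\omega$: the upper bound $\omega(\crit(f_{d,q})) \le 1/q$ is already in the paper, while the matching lower bound $\omega(\crit(f_{d,q})) \geq 1/q$ follows from the diagonal $n$-width estimate $\Omega_n(\{a_k e_k\}) \geq a_{n+1}/\sqrt{n+1}$ applied to the axis critical points $\zeta/q^{k-1}\,e_k \in \crit(f_{d,q})$ (arranging $0 \in \crit(\psi)$ in the choice of $\psi$). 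For $\omega = 1$, I would replace the geometric weights by linear ones, taking $f(x) := \sum_{k\ge 1} 2^{-k}\,\psi(k\,x_k)$: this map satisfies the hypotheses of \cref{RegolarSeriePolinomiali}, has $\Omega_n(\crit(f)) \sim 1/\sqrt{n}$ so $\omega(\crit(f)) = 1$, and its critical values still fill $[0,1]$.

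The residual range $\omega \in (\omega_0, (d-1)^{-1/d}]$---possibly nonempty if $\beta_0 > (d-1)^{1/d}$---I would handle by an \emph{inflation} argument. Picking a counterexample $(H_0, K_0, f_0)$ with $\omega(K_0, H_0) =: \omega_1 \leq \omega$ from the supremum definition, I set $H := H_0 \oplus \ell^2$, $f(x,y) := f_0(x)$ (so $\crit(f) \supseteq \crit(f_0) \times \ell^2$), and $K := K_0 \cup S_\alpha$, where $S_\alpha := \{\alpha_k e_k\}_{k\ge 1} \cup \{0\} \subset \ell^2$ is a tunable diagonal compact. The orthogonality of $H_0$ and $\ell^2$ splits every test subspace as $E = E^{(0)} \oplus E^{(1)}$, giving
\begin{equation}
\max(\Omega_n(K_0), \Omega_n(S_\alpha)) \leq \Omega_n(K,H) \leq \min_{n_0 + n_1 = n}\max(\Omega_{n_0}(K_0), \Omega_{n_1}(S_\alpha)).
\end{equation}
Interpolating $\alpha_k = \alpha_k(\epsilon)$ continuously between a fast-decay regime (where $\omega(K,H) = \omega_1$) and $\alpha_k = 1/k$ (where $\omega(K,H) = 1$) and applying an intermediate-value argument to the monotone functional $\epsilon \mapsto \omega(K_0 \cup S_{\alpha(\epsilon)}, H)$ would give a parameter for which $\omega(K,H) = \omega$ exactly; the resulting $f$ is a counterexample since $f(\crit(f) \cap K) \supseteq f_0(\crit(f_0) \cap K_0)$ has positive Lebesgue measure.

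The hard part will be precisely this last step. The functional $\omega(\cdot,H)$ is defined through a $\limsup$ and is only upper-semicontinuous in the Hausdorff topology, while the bounds above on $\Omega_n(K_0 \cup S_\alpha)$ pinch to equality only for asymptotically "diagonal" $K_0$ (as in the Kupka case). Showing that, for a general counterexample $K_0$ coming from the supremum definition of $\omega_0$, the inflation $\epsilon \mapsto \omega(K_0 \cup S_{\alpha(\epsilon)}, H)$ is continuous and sweeps through every value in $[\omega_1, 1]$ is the delicate technical point; were one able to establish $\omega_0 = (d-1)^{-1/d}$ (equivalently $\beta_0 = (d-1)^{1/d}$), this residual range would be empty and part \eqref{item:2main} would reduce to the direct Kupka constructions above.
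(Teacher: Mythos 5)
Your definition of $\omega_0$ and the lower bound $\omega_0\geq\beta_0(d,m)^{-1}$ via \cref{CorollaryOfGeneralSard-intro} are exactly the paper's opening move, but you place the quantifier differently, and this decides which half of the theorem is free and which half carries the difficulty. The paper takes $\omega_0$ to be the supremum of those $\omega$ for which Sard holds on every compact of width \emph{exactly} $\omega$. With that convention \cref{item:2main} is immediate from the definition of supremum (any $\omega>\omega_0$ fails to lie in the defining set, which hands you a counterexample of width exactly $\omega$), and \cref{ExampleOptimality2,ExampleOptimality3} are invoked only to certify $\omega_0\leq (d-1)^{-1/d}<1$ for $d\geq 3$, i.e.\ non-vacuousness of \cref{item:2main}. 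In particular, under the paper's convention the entire second half of your argument --- the exact-width Kupka computations, the $\omega=1$ construction, and the inflation scheme --- is unnecessary. Your ``$\leq\omega$'' convention instead makes \cref{item:1main} free and obliges you to produce a counterexample of width \emph{exactly} $\omega$ for every $\omega>\omega_0$; this is where your acknowledged gap sits, and it is real: the range $(\omega_0,(d-1)^{-1/d}]$ is covered neither by the Kupka computation nor by the inflation argument, whose intermediate-value step is unsupported ($\omega(\cdot,H)$ is a $\limsup$, and as you note the two-sided bounds on $\Omega_n(K_0\cup S_\alpha)$ do not pinch for a general $K_0$; indeed unions can strictly increase $\omega$, e.g.\ two interleaved geometric diagonals each of width $1/2$ have union of width $1/\sqrt{2}$).

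To be fair, the two conventions merely relocate the same unresolved question --- that good and bad widths do not interleave. Under the paper's definition, \cref{item:1main} requires the set of good exact widths to be an interval of the form $[0,\omega_0)$ or $[0,\omega_0]$; the paper disposes of this with ``both items follow'' and no argument, and that interval property is precisely what would close your gap (it is equivalent, by contraposition, to the upward propagation of counterexamples to any prescribed larger width). So you have not missed an idea that the paper supplies; you have chosen the formulation in which the common difficulty lands on the item you then attack constructively. On the positive side, what you do prove goes beyond the paper: the matching lower bound $\Omega_n(\crit(f_d),\ell^2)\geq \zeta q^{-n}/\sqrt{n+1}$ from the diagonal estimate is correct (\cref{thm:sintesiexamples-intro} only asserts the upper bound $\limsup_n\Omega_n^{1/n}\leq q^{-1}$), and together with your $\omega=1$ example it establishes \cref{item:2main} unconditionally for all $\omega\in((d-1)^{-1/d},1]$ --- which is the whole of \cref{item:2main} precisely when $\omega_0=(d-1)^{-1/d}$. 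My recommendation: adopt the paper's ``$=\omega$'' definition so that \cref{item:2main} is definitional, state explicitly that \cref{item:1main} then rests on the interval property of the good set, and keep your exact-width computations as the proof that the threshold is nontrivial.
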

\begin{proof}
By definition, for any compact set $K$, it holds $\omega(K,H) \in [0,1]$, see \eqref{eq:defomegaKH}. We define $\omega_0(d,m)$ as the supremum of the set of $\omega\in [0,1]$ such that for every $f\in \mathscr{P}_{d}^m(H)$ and for every compact set $K\subset H$ with $\omega(K,H)=\omega$ it holds:
	\begin{equation}
	\mu\bigg(f\left(\mathrm{Crit}(f)\cap K\right)\bigg)=0.
	\end{equation}
By \cref{CorollaryOfGeneralSard-intro} this set is non-empty and $\omega_0(d,m)\geq \beta_0(d,m)^{-1}$. Hence $\omega_0(d,m)\in (0,1]$. Both items of \cref{thm:main} follow. Note that by \cref{thm:sintesiexamples-intro} (more specifically, the construction of \cref{ExampleOptimality3}), provided that $d\geq 3$, one must have $\omega_0(d,m)\leq (d-1)^{-1/d} < 1$, so that \cref{item:2main} is non-vacuous.
\end{proof}

\section{Applications to the Endpoint maps of Carnot groups}
We apply  in this last section the results from the previous sections to the study of the Sard property for Endpoint maps for Carnot groups.

	
\begin{definition}[Carnot groups]\label{CarnotGroups}
		An $m$-dimensional Carnot group of step $s \in \N$ is a connected and simply connected Lie group $(\gr, \cdot)$ of dimension $m$, whose Lie algebra of left-invariant vector fields $\alg$ admits a stratification of step $s$, that is 
	\begin{equation}\label{eq:stratification}
		\alg 
		= 
		\alg_1 \oplus \dots \oplus \alg_s,
	\end{equation}
	where 
	$\alg_i \neq \{0\}$, 
	$\alg_{i+1} = [\alg_1,\alg_i]$ 
	for all $i=1,\dots,s-1$ and $[\alg_1,\alg_s]= \{0\}$. We set $k_i=\dim \alg_i$, $a_i = k_1 + \dots +k_i$, and $k=k_1$, which is called the \emph{rank} of the Carnot group.
	\end{definition}
	
	We identify elements of $\alg$ with vectors of $T_e \gr$. Since $\alg$ is nilpotent and $\gr$ is simply connected, the group exponential map $\exp_{\gr} : \alg \to \gr$ 
	is a smooth diffeomorphism. 
	
	The stratification \eqref{eq:stratification} allows the definition of a family of automorphisms 
	$\delta_\lambda : \gr \to \gr$, for $\lambda>0$, called \emph{dilations} such that
	\begin{equation}
	\delta_\lambda(\exp_{\gr} (v)) = \exp_{\gr}(\lambda^i v),\qquad \forall\, v\in \alg_i,\quad i=1,\dots,s.
	\end{equation}	
		
	Fix an \emph{adapted basis} of $\alg$, namely a set of vectors $v_1, \dots , v_m \in \alg$  such that for all $i= 1, \dots, s$ the list of vectors $v_{k_{i-1} + 1}, \dots , v_{k_i}$ is a basis of $\alg_i$, with the convention $k_0=0$.
	For any $j=1, \dots,m$ the \emph{weight} of $v_j$ is the unique 
	$w_j \in \{1,\dots,s\}$ such that $v_j \in \alg_{w_j}$. We define a diffeomorphism
	$\varphi_{\gr}: \R^m \to \gr$, by
	\begin{equation}
		\varphi_{\gr} (x_1 , \dots , x_m) := 
		\exp_{\gr} \left( \sum_{i=1}^m x_i v_i \right).
	\end{equation}
	The map $\varphi_{\gr}$ defines global coordinates on $\gr$, called \emph{exponential coordinates} (of first kind).
 	For any $x \in \gr $ we denote by 
	$\tau_x: \gr \to \gr $ the left translation on $\gr$, that is $\tau_x(y) = x \cdot y$. We denote by $X_1 , \dots , X_m$ the left-invariant vector fields on $\gr$ corresponding to the adapted basis. They are given by
	$    X_j(x)= D_e \tau_x (v_j) $
	for any $x \in \gr$,
	where $D_e \tau_x $ denotes the differential 
	of $\tau_x$
	at the unit element.

We recall the definition of (homogeneous) polynomials on Carnot groups. We say that $P:\gr \to \R$ is a polynomial if for some (and then any) set of exponential coordinates $\varphi_{\gr}$, the map $P\circ \varphi_{\gr} :\R^m \to \R$ is a polynomial. Any polynomial $P:\gr \to \R$ can be written as
	\begin{equation}\label{Poly_linearCombMono}
		P(\varphi_{\gr}(x_1,\dots,x_m)) = \sum_{\alpha \in \N^m} c_{\alpha} x_1^{\alpha_1} \cdots x_m^{\alpha_m} , \qquad \forall\, x\in \R^m,
	\end{equation}
	for some $c_{\alpha} \in \R$, which are non-zero only for a finite set of multi-indices. For $\alpha \in \N^m$, we denote by
	\begin{equation}
	|\alpha|_{\gr}:=\sum_{i=1}^m w_i \alpha_i,
	\end{equation}
	the weighted degree of the multi-index $\alpha$. The weighted degree of a polynomial $P$ is
	\begin{equation}
	\deg_{\gr} (P) :=\max \left\{|\alpha|_{\gr}\,\middle\vert\, c_\alpha \neq 0\right\}.
	\end{equation}
	A polynomial $P$ is homogeneous of weighted degree $w$ if $P (\delta_\lambda x) = \lambda^w P(x)$ for any $x \in \gr$ and $\lambda >0$. 
	\begin{remark}\label{rmk:homdegree}
In other words, $P$ is homogeneous of weighted degree $w$ if and only if
	\begin{equation}
	P(\varphi_{\gr}(x_1,\dots,x_m)) = \sum_{\substack{\alpha \in \N^m \\ |\alpha|_{\gr}=w}} c_{\alpha} x_1^{\alpha_1} \cdots x_m^{\alpha_m}.
	\end{equation}
	In particular any such polynomial depends only on the variables $x_i$ with $w_i \leq w$, namely $x_1,\dots,x_{k_{w}}$.
	\end{remark}
	
	It is easy to see that the concepts we just introduced do not depend on the choice of exponential coordinates. For this reason, in the following, we fix such a choice, and we omit $\varphi_{\gr}$ from the notation, identifying $\gr \simeq \R^m$.	Note that $e\in \gr$ is identified with $0\in \R^m$.

We recall from \cite[Prop.\ 5.18]{Bellaiche} the following result on the structure of the vector fields $X_j$.
\begin{proposition}\label{VectorFields_structure}
	In exponential coordinates $(x_1,\dots,x_m)\in \R^m$, the vector fields $X_j$, for $j=1,\dots,k$ have the following form:
	\begin{equation}\label{struttura_campi}
		X_j (x_1 , \dots , x_m) = 
		\partial_j 
		+
		\sum_{h >k} 
		Q_{jh}(x_1 , \dots, 
		x_{ k_{w_h - 1 }} ) 
		\partial_h  ,
	\end{equation}
	where $Q_{jh}: \gr \simeq \R^m \to \R $ is a homogeneous polynomial of weighted degree $w_h - 1$.
\end{proposition}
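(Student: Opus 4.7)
The plan is to compute $X_j$ directly from the definition $X_j|_x = \frac{d}{dt}\big|_{t=0}(x \cdot \exp_{\gr}(t v_j))$, reading off coefficients in the coordinate basis, and then constrain them using two structural facts: the polynomial character of the BCH multiplication on a nilpotent group, and the equivariance of left-invariant vector fields under the Carnot dilations. Write $X_j = \sum_{h=1}^m A_{jh}(x)\,\partial_h$. The goal is to show that $A_{jh}$ is a homogeneous polynomial of weighted degree $w_h-1$, and then to identify the terms for $h\leq k$ by their initial value, and to apply \cref{rmk:homdegree} for the remaining $h>k$.

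First I would establish polynomiality. Because $\alg$ is $s$-step nilpotent, the Baker--Campbell--Hausdorff series terminates, so in exponential coordinates the group law $\Phi:\R^m\times\R^m\to\R^m$, defined by $\exp_{\gr}(u)\cdot\exp_{\gr}(v)=\exp_{\gr}(\Phi(u,v))$, is a polynomial map. For $j\leq k$ we have $v_j\in\alg_1$, so $\exp_{\gr}(t v_j)$ corresponds to the coordinate point $t e_j$, and therefore
\begin{equation}
A_{jh}(x)=\frac{\partial}{\partial t}\bigg|_{t=0}\Phi_h(x,t e_j),
\end{equation}
which is a polynomial in $x_1,\dots,x_m$.

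Next I would extract the weighted degree from dilation invariance. The dilations act in coordinates as $\delta_\lambda(x_1,\dots,x_m)=(\lambda^{w_1}x_1,\dots,\lambda^{w_m}x_m)$ and are group automorphisms, so since $v_j\in\alg_1$ one gets $\delta_\lambda(x\cdot\exp_{\gr}(tv_j))=\delta_\lambda(x)\cdot\exp_{\gr}(\lambda t v_j)$. Differentiating in $t$ at $0$ yields $(\delta_\lambda)_*X_j=\lambda X_j$. Writing this out in coordinates using $D_x\delta_\lambda(\partial_h)=\lambda^{w_h}\partial_h|_{\delta_\lambda(x)}$ gives the functional equation
\begin{equation}
A_{jh}(\delta_\lambda(x))=\lambda^{w_h-1}A_{jh}(x),
\end{equation}
so each $A_{jh}$ is a homogeneous polynomial of weighted degree $w_h-1$.

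Finally I would conclude. At $x=0$ the vector field $X_j$ coincides with $v_j=e_j$, so $A_{jh}(0)=\delta_{jh}$. For $h\leq k$ one has $w_h=1$, forcing $A_{jh}$ to be a constant (weight zero), hence $A_{jj}\equiv 1$ and $A_{jh}\equiv 0$ for $h\leq k$, $h\neq j$. For $h>k$ one has $w_h\geq 2$, so $A_{jh}$ is a homogeneous polynomial of positive weighted degree $w_h-1$, and by \cref{rmk:homdegree} it depends only on the coordinates $x_1,\dots,x_{k_{w_h-1}}$. Defining $Q_{jh}:=A_{jh}$ yields the claimed expression. I do not foresee a genuine obstacle: the only delicate point is the correct bookkeeping of weights in the dilation identity $A_{jh}(\delta_\lambda x)=\lambda^{w_h-1}A_{jh}(x)$, where the factor $\lambda$ on the right-hand side comes from $v_j\in\alg_1$; the argument would break for $j>k$, which is consistent with the statement being restricted to $j\leq k$.
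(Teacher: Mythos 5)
Your proof is correct. Note that the paper does not prove this proposition at all: it is quoted from \cite[Prop.\ 5.18]{Bellaiche}, so there is no internal argument to compare against. Your route — polynomiality of the group law via the truncated Baker--Campbell--Hausdorff formula, the equivariance $(\delta_\lambda)_* X_j=\lambda X_j$ for $v_j\in\alg_1$ giving $A_{jh}(\delta_\lambda x)=\lambda^{w_h-1}A_{jh}(x)$, and evaluation at the origin to pin down the $h\le k$ terms — is the standard proof of this fact, and each step is sound; in particular you correctly establish polynomiality of the coefficients \emph{before} invoking the homogeneity functional equation, and correctly reduce the restricted variable dependence $Q_{jh}=Q_{jh}(x_1,\dots,x_{k_{w_h-1}})$ to \cref{rmk:homdegree}. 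Your closing remark that the argument changes for $j>k$ (where one would get weighted degree $w_h-w_j$, possibly negative) is also accurate and explains the restriction to $j\le k$ in the statement.
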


\subsection{Endpoint maps}
	In this section we introduce the Endpoint map associated to a Carnot group. We work with the separable Hilbert space
	\begin{equation}
	H:=L^2(I, \R^k),
	\end{equation}
	for some fixed interval $I$, say $I=[0,1]$, and where $k$ is the rank of $\gr$. For any $u \in H$ with $u=(u_1,\dots,u_{k})$ we consider the following Cauchy problem
	for a curve $\gamma: I \to \gr$:
	\begin{equation}\label{system}
			\dot{\gamma}(t)=  \sum_{j=1}^{k}  u_j(t) X_j(\gamma(t)), \qquad 			\gamma(0)=e.
	\end{equation}
It is well--known that \eqref{system} admits a unique absolutely continuous solution, $\gamma_u : I \to \gr$, for any $u \in H$.

\begin{definition}\label{def:endpoint}
Let $\gr$ be a Carnot group. The \emph{Endpoint map} of $\gr$ is the function
\begin{equation}
\End: H \to \gr,
\end{equation}
given by $\End(u):=\gamma_u(1)$.  We call the function $u\in H$ a \emph{control}. We use the same notation to denote the Endpoint map $\End:H\to \R^m\simeq \gr$ with an identification in exponential coordinates. 
\end{definition}

Thanks to the special form of the vector fields $X_j$ given by \cref{VectorFields_structure}, we can rewrite the system \eqref{system} in exponential coordinates. Adopting the notation  $\gamma= (\gamma_{1} , \dots , \gamma_m)$, we have 
\begin{equation}\label{eq:systemcomponents}
\begin{cases}
\gamma_i(0) =0 & i=1,\dots,m, \\
\dot{\gamma}_i(t) = u_i(t) & i=1,\dots,k, \\
\dot{\gamma}_i(t) =\displaystyle \sum_{j=1}^k u_j(t) Q_{ji}(\gamma_1(t),\dots,\gamma_{k_{w_i-1}}(t)) & i=k+1,\dots,m ,
\end{cases}
\end{equation}
for a.e.\ $t\in I$. Similarly, we write $\gamma_u=(\gamma_{u,1},\dots,\gamma_{u,m})$ for the solution to \eqref{eq:systemcomponents}.

The next result (\cref{RestrictionEndpointPolynomial-intro} in the Introduction) connects this framework to the previous sections.

\begin{proposition}\label{RestrictionEndpointPolynomial}
	Let $\gr$ be a Carnot group of topological dimension $m$, step $s$, and rank $k$. Then the Endpoint map $\End\in \mathscr{P}_s^m(H)$, for $H=L^2(I,\R^k)$.
\end{proposition}
\begin{proof}The fact that $\End$ is $\mathcal{C}^1$ (in fact smooth) and that the differential $D\End$ is locally Lipschitz follows from \cite[Prop.\ 8.5]{ABB}. The fact that the $D\End$ is weakly continuous is proved in \cite[Thm.\ 23]{BoarLer17}. It remains to show that for all finite dimensional linear subspaces $E$ of $H$, the restriction $\End|_E:  E \to \R^m$ is a polynomial map of degree at most $s$.

We actually prove a stronger claim: for any $n \in \N$, for any $n$-dimensional linear subspace $E \subset L^2(I, \R^k)$, for any $t \in I$ and $i=1,\dots,m$, the function on $E$ defined by 
$ E\ni u \mapsto \gamma_{u , i } (t)$ is a polynomial of degree $w_i$, and its coefficients (when $u$ is written in terms of some basis of $E\simeq \R^n$) are continuous functions of $t \in I$.
We prove the claim by induction on $w$. We prove the base case, that is $w_i =1$. In this case $i=1,\dots,k$. For these values of $i$, by \eqref{eq:systemcomponents} we obtain
\begin{equation}
\gamma_{u,i}(t) = \int_0^t u_i(s) \,ds,
\end{equation}
which is clearly a polynomial function of $u\in E$, of degree $1$. Moreover its coefficients (when $u$ is written in terms of a basis of $E\simeq \R^n$) are continuous functions of $t\in I$. This concludes the base case.

Let then $1\leq \theta <s$. The induction assumption is that for any $t \in I$ the function $E\ni u \to \gamma_{u , i} (t)$ is a polynomial of degree $w_i$ for all $i$ such that $w_i \leq \theta$, with coefficients that depends continuously on $t$.

Now we prove that the above property holds for any $i$ such that $w_i = \theta +1$. In particular $i > k$. In this case, by \eqref{eq:systemcomponents}, the curve $\gamma_{u , i}:I \to \R$ is determined by
\begin{equation}\label{eq:integrating}
\gamma_{u,i}(t) = \sum_{j=1}^k \int_0^t u_j(s) Q_{ji}(\gamma_{u,1}(s),\dots,\gamma_{u,k_{w_i-1}}(s))\, ds.
\end{equation}
Recall that $Q_{ji}$ is a homogeneous polynomial of weighted degree $w_i-1$ and as already highlighted in the notation, depends only on the functions $\gamma_{u,\ell}(s)$ with $w_{\ell} \leq w_i-1 = \theta$. By the induction assumption, all the functions $E\ni u \mapsto \gamma_{u,\ell}(s)$ are polynomials of degree $w_\ell\leq \theta$, depending continuously on $s$. By using the definition of weighted degree (see \cref{rmk:homdegree}), it follows that the function
\begin{equation}
E\ni u \mapsto Q_{ji}(\gamma_{u,1}(s),\dots,\gamma_{u,k_{w_i-1}}(s)),
\end{equation}
is a polynomial of degree $w_i-1$, with coefficients depending continuously on $s\in I$. Then, the integrand in \eqref{eq:integrating}, namely the function
\begin{equation}
E\ni u \mapsto \sum_{j=1}^k u_j(s) Q_{ji}(\gamma_{u,1}(s),\dots,\gamma_{u,k_{w_i-1}}(s)),
\end{equation}
is a polynomial of degree $w_i=\theta +1$, with coefficients depending continuously on $s\in I$. We conclude easily using \eqref{eq:integrating}.
\end{proof}

Thanks to \cref{RestrictionEndpointPolynomial-intro} we can apply the results of \cref{sec:Sardinfinite} to Endpoint maps of Carnot groups, obtaining \cref{RisultatoEndpointGenerale-intro} in the Introduction.

\subsection{Sard property for real--analytic controls}\label{sec:Carnotrealanal}
We specialize the result in \cref{RisultatoEndpointGenerale-intro} to some concrete set of controls $K$, estimating its $n$-width.

	Fix a closed interval $I\subset \R$ and let 
	$\mathcal{D}(r,I) 
	:= 
	\{z \in \C \mid d(z,I) \leq r \}$.
	The radius of convergence of a real--analytic function $u:I \to \R$ is the largest 
	$r \in (0, \infty]$ such that $u$ extends to a (unique) holomorphic function on $\mathcal{D}(r, I) $.
	For fixed $r>0$ we denote by $\mathcal{C}^\omega(I,\R^k;r)\subset L^2(I, \R^k)$ the set of real--analytic controls whose components have radius of convergence strictly greater than $r$:
	\begin{equation}
	\mathcal{C}^\omega(I,\R^k;r):=\bigg\{u \in L^2(I,\R^k) \,\bigg|\, 
	\textrm{$u$ is real--analytic with radius of convergence 
	strictly greater than } r \bigg\},
	\end{equation}
	which we endow with the supremum norm:
	\begin{equation}
	\|u\|_{\mathcal{C}^\omega(I,\R^k;r)}:=
	\sup_{z\in 	\mathcal{D}(r, I)}\|u(z)\|.
	\end{equation}
Similarly, we consider finite concatenations of real--analytic controls as above. More precisely, for $\ell\in \N$ divide the interval $I$ in equispaced sub-intervals:
\begin{equation}\label{eq:decomposition}
I = I_1 \cup \dots \cup I_\ell, \qquad I_a = \inf I+ \left[\frac{(a-1)|I|}{\ell} ,\frac{a |I|}{\ell}\right],
\end{equation}
and we let $\mathcal{C}^\omega(I,\R^k;r,\ell)\subset L^2(I, \R^k)$ be the set of piecewise analytic controls $u: I\to \R^k$ that are real--analytic on each set $I_1,\dots,I_\ell$, with radius of convergence strictly greater than $r$, namely 
\begin{equation}
\mathcal{C}^\omega(I,\R^k;r,\ell) = \bigg\{u \in L^2(I,\R^k) \,\bigg|\,  u|_{I_a} \in \mathcal{C}^\omega(I_a,\R^k;r) \subset L^2(I_a,\R^k),\,\text{ for all $a=1,\dots,\ell$} \bigg\},
\end{equation}
endowed with the norm
\begin{equation}\label{eq:normComegaell}
\|u\|_{\mathcal{C}^\omega(I,\R^k;r,\ell)} = \max_{a = 1,\dots,\ell} \|u\|_{\mathcal{C}^\omega(I_a,\R^k;r,\ell)}.
\end{equation}

	\begin{proposition}\label{WidthAnalitiche}
Fix $I=[0,1]$, $\ell\in \N$ and $r>1$. Let $Y = \mathcal{C}^\omega(I,\R^k;r,\ell)$. Then the set $B_{Y} := \{u\in \V \mid \|u\|_{Y}\leq 1\}$ is precompact in $L^2(I,\R^k)$, and for its $n$-width it holds:
	 \begin{equation}
			\Omega_n
			( 
			B_{Y}, L^2(I,\R^k) )
			\leq
			\frac{ 
				( k\ell)^{1/2} 
									}
				{  \ln r
									}
			\left(
			\frac{1}{r}
			 \right)^
			 {\floor*{\frac{n}{k\ell}}}.
		\end{equation}
	\end{proposition}
\begin{proof}
We start by proving the case $\ell=1$. Fix $u=(u_1 , \dots , u_k) \in B_{Y}$. Since $r>1$, any $u_j$ for $j=1 , \dots , k$ has an expansion in power series on the complex unit ball centered at the origin, denoted by $B_{\C}(1)$. In particular, we have for any $t \in I$ 
\begin{equation}
	u_j(t)
	=
	\sum_{h=0}^\infty
	\frac
	{u^{(h)}_j (0)}
	{h!}	
	t^h.
\end{equation}
For any $N \geq 1$ we consider  the truncated sum, given for any $t \in I$ as
\begin{equation}
	u_{j,N} (t)
	=
	\sum_{h=0}^{N-1}
	\frac
	{u^{(h)}_j (0) }
	{	h!	} 
	t^h  .
\end{equation}
By the Cauchy integral formula, and since $r>1$, we have
\begin{equation}
	u_j^{(h)}(0)
	=
	\frac{h!}{2\pi i}
	\int_{\partial B_{\C}(r)}
	\frac{u_j(z)}{z^{h+1}}
	\,
	dz,
\end{equation}
where $B_{\C}(r)$ is the complex unit ball of radius one centred at zero. Therefore,
\begin{equation}
	|u_j^{(h)} (0) |
	\leq
	\frac{h!}{r^h}  .
\end{equation}
Hence we have
\begin{equation}
	\|u_j - u_{j,N}\|_{L^2(I)}
	\leq 
	\norm{u_j - u_{j,N}}_{L^{\infty}(I)}
	\leq 
	\sum_{h = N }^{ \infty}
	\frac
	{\vert u_j^{(h)}(0) \vert}
	{h!}
	\leq
	\sum_{h= N }^{ \infty}
	\frac
	{1}{r^h}
	\leq
	\int_{N}^{\infty}
	\frac{1}{r^x}
	\,
	dx
	=
	\frac{r^{-N}}{\ln r}.
\end{equation}
It follows that for all $u\in B_{Y}$ it holds
\begin{equation}\label{EstimateWidth_1}
\|u - u_N\|_{L^2(I, \R^k)} \leq \left( \sum_{j=1}^k \|u_j-u_{j,N}\|^2_{L^2(I)} \right)^{1/2} \leq 
	\frac{k^{1/2} r^{- N}}{\ln r}.
\end{equation}
Thanks to this result we can find an approximating subspace. Consider the standard basis $e_1 , \dots , e_k$ of $\R^k$, and
for any $k \in \N$ we 
denote by $f_h \in L^2(I)$ the function $f_h(t) = t^h$. We have 
\begin{equation}
	u_N
	=
	\sum_{h=0}^{N-1}
	\frac
	{u^{(h)} (0) }
	{	h!	} 
	f_h
	=
	\sum_{j=1}^k
	\sum_{h=0}^{N-1}
	\frac
	{u^{(h)}_j (0) }
	{	h!	} 
	f_h \otimes e_j  .
\end{equation}
In particular, 
$u_N \in V_N
:=
\mathrm{span} \{  f_h \otimes e_j \mid j=1,\dots , k,\;
		h= 0, \dots , N-1\}
$ and $\dim (V_N) = k N$.
Hence from \eqref{EstimateWidth_1} it follows that for any $N\in \N$
\begin{equation}\label{EstimateWidth_2}
	\Omega_{ k N }
	( 
	B_{Y}, L^2(I,\R^k) )
	\leq
	\frac{
		k^{1/2}
	}
	{  \ln r
	}
	\left(
\frac{1}{r}
\right)^{N}.
\end{equation}
For $n \in \N$ we consider $N =\floor*{\frac{n}{k}} $. 
By \cref{def:nwidth}, the $n$-width is non-increasing as a function of $n\in \N$, hence by \eqref{EstimateWidth_2} we have
\begin{equation}
\Omega_n 
( B_{Y}, L^2(I,\R^k) )
\leq 
\Omega_{k N}
( B_{Y}, L^2(I,\R^k) )
\leq 
\frac{
	k^{1/2}
}
{  \ln r
}
\left(
\frac{1}{r}
\right)^{\floor*{\frac{n}{k}}}.
\end{equation}
This proves the estimate on the $n$-width for $\ell=1$.

We sketch the argument for general $\ell$. In this case, for any $u \in \mathcal{C}^\omega(I,\R^k;r,\ell)$ we can write
\begin{equation}
u = \sum_{a=1}^\ell u|_{I_a} \mathbbm{1}_{I_a},
\end{equation}
where $I_a$ are the intervals of the decomposition \eqref{eq:decomposition}, and $\mathbbm{1}_{I_a}$ are the corresponding characteristic functions. By definition, each $u|_{I_a}$ is the restriction to $I_a$ of a real--analytic function with radius of convergence $r_a\geq r>1$. We can expand each $u|_{I_a}$ in Taylor series centered at the lower bound of $I_a$. Then, in order to obtain the finite-dimensional approximation, we truncate the series as in the previous case, repeating analogous estimates for the remainder (taking into account the length of the intervals $|I_a| = 1/\ell$). This concludes the estimate on the $n$-width for general $\ell$.

To prove the compactness assertion, observe that there is a linear immersion (given by the inclusion) $\mathcal{C}^\omega(I,\R^k;r,\ell) \hookrightarrow L^2(I,\R^k)$ and it holds $\|u\|_{L^2(I,\R^k)} \leq \|u\|_{C^\omega(I,\R^k;r,\ell)}$ for all $u\in C^\omega(I,\R^k;r,\ell)$. In particular $B_{Y}$ is bounded in $L^2(I,\R^k)$ and since its $n$-width tends to zero as $n\to \infty$ it is also precompact by \cref{PropoPinkus}.
\end{proof}

We can now prove \cref{thm:main4intro} in the Introduction, of which we recall the statement.
\begin{theorem}\label{thm:main4}
Let $\gr$ be a Carnot group of topological dimension $m$, step $s$, and rank $k$. Let $I=[0,1]$. For $\ell\in\N$, there exists $r=r(m,s,k,\ell)>1$ such that
\begin{equation}
\mu\big(\End(\mathrm{Crit}(\End)\cap \mathcal{C}^\omega(I,\R^k;r,\ell))\big)= \mu\big(\End(\mathrm{Crit}(\End|_{\mathcal{C}^\omega(I,\R^k;r,\ell)})\big) =0.
\end{equation}
Namely, the Sard property holds on the space of piecewise real--analytic controls with radius of convergence $>r$ and with $\ell$ pieces. 
Furthermore it holds
\begin{equation}
\dim_{\mathcal{H}}\big(\End(\mathrm{Crit}(\End)\cap \mathcal{C}^\omega(I,\R^k;\infty,\ell))\big) \leq \dim_{\mathcal{H}}\big(\End(\mathrm{Crit}(\End|_{\mathcal{C}^\omega(I,\R^k;\infty,\ell)})\big) \leq m-1.
\end{equation}
Namely, the strong Sard property holds on the space of piecewise entire controls with $\ell$ pieces.
\end{theorem}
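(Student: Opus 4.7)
The strategy is to feed the compact unit balls of $\mathcal{C}^\omega(I,\R^k;r,\ell)$ into the general Sard criterion for Endpoint maps (\cref{RisultatoEndpointGenerale-intro}, or equivalently \cref{thm:main2intro}), using the $n$-width estimate of \cref{WidthAnalitiche} to verify the exponential decay hypothesis, and then to obtain the strong Sard statement by letting the radius of convergence tend to infinity.

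For the first assertion, I would fix $\ell \in \N$, denote by $K_r$ the closed unit ball of $\V = \mathcal{C}^\omega(I,\R^k;r,\ell)$, and observe that $\V = \mathrm{span}(K_r)$ since $K_r$ is the unit ball of the vector space $\V$. By \cref{WidthAnalitiche}, $K_r$ is compact in $L^2(I,\R^k)$ and
\begin{equation*}
\limsup_{n \to \infty} \Omega_n(K_r, L^2(I,\R^k))^{1/n} \leq r^{-1/(k\ell)}.
\end{equation*}
Setting $r(m,s,k,\ell) := \beta_0(s,m)^{k\ell}$, any $r > r(m,s,k,\ell)$ gives $q := r^{1/(k\ell)} > \beta_0(s,m)$, so the Sard statement for $\V$ (in both forms appearing in the theorem) follows directly from \cref{RisultatoEndpointGenerale-intro}.

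For the strong Sard part, I would apply the Hausdorff dimension estimate of \cref{thm:main2intro} to $\V_R := \mathcal{C}^\omega(I,\R^k;R,\ell)$ with $q = R^{1/(k\ell)}$ and $\nu = m-1$, obtaining
\begin{equation*}
\dim_{\mathcal{H}}\bigl(\End(\mathrm{Crit}(\End|_{\V_R}))\bigr) \leq m - 1 + \frac{k\ell \, \log \beta_0(s,m)}{\log R}, \qquad \forall\, R > 1.
\end{equation*}
Since $W \subset \V_R$ for every $R > 0$, it suffices to show $\mathrm{Crit}(\End|_W) \subset \mathrm{Crit}(\End|_{\V_R})$ and let $R \to \infty$. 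This inclusion follows from a short density argument: if $u \in \mathrm{Crit}(\End|_W)$ there is $\lambda \in (\R^m)^* \setminus \{0\}$ with $\lambda \circ D_u \End \equiv 0$ on $W$; since $W$ contains all piecewise polynomial controls, which are dense in $L^2(I,\R^k)$, and $D_u\End : L^2 \to \R^m$ is continuous, this forces $\lambda \circ D_u\End \equiv 0$ on $L^2$, so in fact $u \in \mathrm{Crit}(\End) \subset \mathrm{Crit}(\End|_{\V_R})$. Letting $R \to \infty$ yields the bound $\leq m-1$ for both $\mathrm{Crit}(\End|_W)$ and the smaller set $\mathrm{Crit}(\End) \cap W$.

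There is no serious obstacle: the heavy work---estimating $n$-widths of piecewise real--analytic unit balls (\cref{WidthAnalitiche}) and translating such estimates into Sard conclusions (\cref{RisultatoEndpointGenerale-intro,thm:main2intro})---has already been carried out in the previous sections. The only points requiring a little care are the explicit choice of the threshold $r(m,s,k,\ell) = \beta_0(s,m)^{k\ell}$ and the density argument used to identify $\mathrm{Crit}(\End|_W)$ with $\mathrm{Crit}(\End) \cap W$ in the entire-controls case.
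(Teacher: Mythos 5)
Your proof is correct and is essentially the paper's: combine \cref{RestrictionEndpointPolynomial} (so $\End\in\mathscr{P}^m_s(L^2(I,\R^k))$) with the $n$--width bound of \cref{WidthAnalitiche} for the unit ball $K_{\V}$, and apply \cref{RisultatoEndpointGenerale-intro} to $\mathrm{span}(K_{\V})=\V$, choosing the radius strictly larger than $\beta_0(s,m)^{k\ell}$ so that $q=r^{1/(k\ell)}>\beta_0(s,m)$. For the entire-controls case the paper is terse (it implicitly uses that for every finite $R$ the ball $\{u\in W \mid \|u\|_{\mathcal{C}^\omega(I,\R^k;R,\ell)}\leq 1\}$ already spans $W$, so the Hausdorff bound $m-1+k\ell\log\beta_0/\log R$ holds for all $R$ and one lets $R\to\infty$); your variant --- passing through $\V_R\supset W$ and using density of piecewise polynomial controls to establish $\mathrm{Crit}(\End|_{W})\subseteq\mathrm{Crit}(\End)\cap\V_R$ before letting $R\to\infty$ --- is equally valid and in fact spells out a step the paper leaves implicit.
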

\begin{proof}
Observe that $\End\in \mathscr{P}^m_s(L^2(I,\R^k))$ by \cref{RestrictionEndpointPolynomial-intro}. Let $ Y: =\mathcal{C}^\omega(I,\R^k;r,\ell)$. Let $B_{Y}$ be the unit ball of $Y$ with respect to the norm \eqref{eq:normComegaell}, namely $B_{Y} = \{u\in Y \mid \|u\|_{Y}\leq 1\}$. By \cref{WidthAnalitiche,PropoPinkus}, for any $r>1$ we have that $K:= \overline{B_{Y}}$ is compact (we stress that the closure is taken in the $L^2(I,\R^k)$ topology, and not w.r.t.\ the norm $\|\cdot\|_{Y}$), and it holds
\begin{equation}
\limsup_{n\to \infty}
\Omega_{n}(K, L^2(I, \R^k))^{1/n} = \limsup_{n\to \infty}
\Omega_{n}( B_{Y}, L^2(I, \R^k))^{1/n} \leq r^{-\tfrac{1}{k\ell}},
\end{equation}
where we used \cref{thmPinkus} for the first equality.
 We can apply \cref{i:RisultatoEndpointGenerale-intro_1} of \cref{RisultatoEndpointGenerale-intro}: for $r > r(m,s,k,\ell):=\beta_0(s,m)^{k\ell}$ it holds
\begin{equation}
\mu\big(\End(\mathrm{Crit}(\End)\cap \mathrm{span}(K))\big)= \mu\big(\End(\mathrm{Crit}(\End|_{\mathrm{span}(K)})\big) =0.
\end{equation}
By \cref{lem:restrandclosures}, the above equalities remain valid replacing $X:=\mathrm{span}(K)$ with any linear subspace of the latter, dense in it w.r.t.\ the $L^2(I,\R^k)$ topology. In particular, this is the case for $Y$, observing that $Y=\mathrm{span}(B_Y)$ and thus $\bar{Y} = \overline{\mathrm{span}(B_Y)}\supseteq \mathrm{span}(\overline{B_Y})=\mathrm{span}(K)$.

The same argument via \cref{i:RisultatoEndpointGenerale-intro_2} of \cref{RisultatoEndpointGenerale-intro} yields the statement for the space $\mathcal{C}^\omega(I,\R^k;\infty,\ell)$.
\end{proof}

\subsection{Surjectivity of the Endpoint map on finite-dimensional spaces of controls}\label{sec:surjectivity}

The Endpoint maps of sub-Riemannian manifolds are surjective when restricted to piecewise constant controls: this follows from the proof of the Rashevskii-Chow theorem in \cite{ABB}.
In this section, we prove that in Carnot groups the Endpoint maps are surjective also when they are restricted to the space of controls 
which consists in the set of polynomial maps of some large enough fixed degree (which depends on the Carnot group). The proof is obtained by using a quantitative version of the inverse function theorem, as it can be found in \cite{ClarkeInvFunc}. For completeness we present and prove in our setting the statements we need.

Given $M  \in \mathcal{L} (\R^m , \R^m)$ we denote by $\sigma(M)$ the smallest singular value of $M$, namely
\begin{equation}
	\sigma(M) := \inf_{ \norm{v} = 1} \norm{M v} .
\end{equation}  
From the definition it follows that for all $M_1,M_2 \in \mathcal{L} (\R^m , \R^m)$ it holds
\begin{equation}\label{WeylIneqFinDim}
| \sigma(M_1) - \sigma(M_2) |
\leq 
\norm{M_1 - M_2}_{\op} .
\end{equation}
From \cite[Lemma 3]{ClarkeInvFunc} we obtain the following technical lemma.
\begin{lemma}\label{Lemma3Clarke}
	Let $M_0 \in \mathcal{L}( \R^m , \R^m)$  be invertible. Then, for every $v \in \R^m$ with $\|v\|=1$ there exists $w \in \R^m$ with $\|w\| = 1$ such that 
	\begin{equation}
		\langle w , M v \rangle
		\geq 
		\frac{ \sigma (M_0) }{2},
	\end{equation}
	for all $M$ such that $\|M-M_0\|_{\op} \leq \tfrac{\sigma(M_0)}{2}$.
\end{lemma}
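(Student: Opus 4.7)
The plan is to take, for the fixed unit vector $v$, the natural choice
\begin{equation}
w := \frac{M_0 v}{\|M_0 v\|},
\end{equation}
which is well defined because $M_0$ is invertible (so $\sigma(M_0)>0$) and $v$ is a unit vector, guaranteeing $\|M_0 v\|\geq \sigma(M_0)>0$. By construction $\|w\|=1$, and this choice only depends on $v$ and $M_0$, not on the perturbation $M$, which is precisely the order of quantifiers required by the statement.

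With this $w$ in hand, I will simply split
\begin{equation}
\langle w, Mv\rangle \;=\; \langle w, M_0 v\rangle + \langle w, (M-M_0)v\rangle
\end{equation}
and estimate the two terms separately. The first one equals $\|M_0 v\|$ by the very definition of $w$, and by the variational characterisation of $\sigma(M_0)$ this is at least $\sigma(M_0)$. The second one is bounded below by $-\|M-M_0\|_{\op}\,\|v\| \geq -\sigma(M_0)/2$ via Cauchy--Schwarz together with the hypothesis $\|M-M_0\|_{\op}\leq \sigma(M_0)/2$. Adding the two bounds yields $\langle w, Mv\rangle \geq \sigma(M_0)-\sigma(M_0)/2 = \sigma(M_0)/2$, which is the desired inequality.

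There is essentially no technical obstacle here: the only thing to be careful about is the order of the quantifiers (the vector $w$ must be chosen before the perturbation $M$ is presented), and the fact that the choice $w=M_0 v/\|M_0 v\|$ depends only on $M_0$ and $v$ handles this automatically. The inequality \eqref{WeylIneqFinDim} recalled just above the lemma is not actually needed for this argument; it will presumably be used elsewhere in the inverse function theorem application that follows.
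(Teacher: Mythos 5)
Your proof is correct, and it takes a genuinely different (and more elementary) route than the paper. The paper follows Clarke's original argument: it forms the convex set $C_v=\{Mv \mid \|M-M_0\|_{\op}\leq \sigma(M_0)/2\}$, uses the Weyl-type inequality \eqref{WeylIneqFinDim} to show $\|Mv\|\geq\sigma(M)\geq\sigma(M_0)/2$ for every such $M$, so that $C_v$ is disjoint from the open ball of radius $\sigma(M_0)/2$, and then invokes the separation theorem for convex sets to produce a (non-explicit) unit vector $w$. You instead exhibit $w=M_0v/\|M_0v\|$ explicitly and verify the inequality by the decomposition $\langle w,Mv\rangle=\langle w,M_0v\rangle+\langle w,(M-M_0)v\rangle$, bounding the first term below by $\sigma(M_0)$ and the second by $-\sigma(M_0)/2$ via Cauchy--Schwarz. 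Your choice of $w$ manifestly depends only on $M_0$ and $v$, so the quantifier order is handled correctly, and you rightly observe that \eqref{WeylIneqFinDim} is not needed for your argument (the paper uses it only to lower-bound $\sigma(M)$, a step your decomposition bypasses). What the paper's argument buys is fidelity to the cited reference and the intermediate geometric fact that all of $C_v$ stays at distance $\geq\sigma(M_0)/2$ from the origin; what yours buys is a constructive, shorter proof with the same constant.
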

\begin{proof}
Given $v \in \R^m$ with $\|v\|=1$, the set $C_v \subset 
\R^m$
\begin{equation}
	C_v 
	:=
\left \{ 
Mv \in \R^m \,\middle\vert\, 
\|M-M_0\|_{\op} \leq \frac{\sigma(M_0)}{2}
\right \},
\end{equation}
is convex. We now prove that its distance from $0$ is at least $\frac{ \sigma( M_0) }{ 2 } $.
Indeed, by \eqref{WeylIneqFinDim}, for any $M$ with $\|M-M_0\|_{\op} \leq \tfrac{\sigma(M_0)}{2}$	we obtain 
$\sigma(M) > \frac{\sigma(M_0)}{2}$. Therefore,
\begin{equation}
\|Mv\| 
\geq
\inf_{\|v\|=1 } \|Mv\|
=
\sigma(M)
>
\frac{ \sigma( M_0) }{ 2 }  .
\end{equation}
Hence, we have proved that $C_v$ is separated from the ball $B_{\R^m} \left( \frac{\sigma(M_0)}{2} \right)$.
We conclude the proof thanks to 
the usual separation theorem for convex sets. Indeed, it directly provides
$w \in \R^m$ with $\|w\| = 1$ such that for all $M$ with $\|M-M_0\|_{\op} \leq \tfrac{\sigma(M_0)}{2}$ we have
\begin{equation}
	\langle w , M v \rangle
	\geq 
	\frac{ \sigma (M_0) }{2}  ,
\end{equation}
concluding the proof.
\end{proof}
From \cite[Lemma 4]{ClarkeInvFunc} we obtain the following ``quantitative injectivity'' lemma.
\begin{lemma}\label{Lemma4Clarke}
Let $W\subset \R^m$ be an open set, and $f \in \mathcal{C}^1 (W , \R^m)$. Assume $x_0 \in W$ is such that $D_{x_0} f$ is invertible. Set $\sigma_{f , x_0} := \sigma ( D_{x_0} f ) >0$.
There exists $r_{f, x_0} >0$ such that for any $g \in C^1 ( W, \R^m)$ with
\begin{equation}\label{IpotesiInverseFunQuantit}
	\sup_{z \in W} 
	\norm{ D_z f - D_z g}_{\op}
	< 
	\frac{ \sigma_{f,x_0} }{ 4 } ,
\end{equation}
and all $x , y \in B_{\R^m} (x_0 , r_{f, x_0})$ it holds
\begin{equation}\label{InvQuantFun4}
\| g(x) - g(y) \|
	\geq
	\frac{\sigma_{ f , x_0 }}{2} \| x - y \| .
\end{equation}
\end{lemma}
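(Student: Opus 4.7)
The plan is to combine a fundamental-theorem-of-calculus representation of $g(x)-g(y)$ with the separation principle supplied by \cref{Lemma3Clarke}. The key geometric fact is: if the derivative of $g$ stays uniformly close to $D_{x_0}f$ along a segment, then one can test the chord $g(x)-g(y)$ against a single unit vector $w$ to produce a lower bound linear in $\|x-y\|$.

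First I would fix $r_{f,x_0} > 0$ small enough that the ball $B_{\R^m}(x_0, r_{f,x_0})$ is contained in $W$ and, using the continuity of $z \mapsto D_z f$ (which is ensured by $f \in \mathcal{C}^1$), such that
\begin{equation}
\sup_{z \in B_{\R^m}(x_0, r_{f,x_0})} \|D_z f - D_{x_0} f\|_{\op} < \frac{\sigma_{f,x_0}}{4}.
\end{equation}
Combined with the standing hypothesis \eqref{IpotesiInverseFunQuantit}, the triangle inequality then gives
\begin{equation}
\|D_z g - D_{x_0} f\|_{\op} \leq \|D_z g - D_z f\|_{\op} + \|D_z f - D_{x_0} f\|_{\op} < \frac{\sigma_{f,x_0}}{2}
\end{equation}
for every $z \in B_{\R^m}(x_0, r_{f,x_0})$, which is precisely the regime in which \cref{Lemma3Clarke} applies with $M_0 = D_{x_0}f$.

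Next, given $x, y \in B_{\R^m}(x_0, r_{f,x_0})$ with $x \neq y$, set $v := (x-y)/\|x-y\|$. Apply \cref{Lemma3Clarke} to obtain a unit vector $w = w_v \in \R^m$ such that $\langle w, Mv\rangle \geq \sigma_{f,x_0}/2$ for every $M$ with $\|M - D_{x_0}f\|_{\op} \leq \sigma_{f,x_0}/2$. Since the segment $[y, x]$ lies inside the convex ball $B_{\R^m}(x_0, r_{f,x_0})$, by the previous step $M = D_{y + t(x-y)} g$ qualifies for every $t \in [0,1]$. The fundamental theorem of calculus then yields
\begin{equation}
\|g(x)-g(y)\| \geq \langle w, g(x)-g(y)\rangle = \int_0^1 \langle w, D_{y+t(x-y)} g \cdot (x-y)\rangle\,dt \geq \frac{\sigma_{f,x_0}}{2}\,\|x-y\|,
\end{equation}
which is \eqref{InvQuantFun4}.

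I do not expect any serious obstacle: the only substantive input is the uniform closeness of $D_z g$ to $D_{x_0}f$ on the ball, and once this is secured \cref{Lemma3Clarke} does all the work. The most delicate point is merely to keep the two error contributions ($\sigma_{f,x_0}/4$ each) balanced so that their sum stays below the threshold $\sigma_{f,x_0}/2$ required by \cref{Lemma3Clarke}, which is what motivates the splitting into the hypothesis bound and the continuity bound above.
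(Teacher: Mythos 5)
Your proposal is correct and follows essentially the same route as the paper's proof: shrink the ball so that $\|D_zf - D_{x_0}f\|_{\op} < \sigma_{f,x_0}/4$, combine with the hypothesis to place every $D_zg$ within $\sigma_{f,x_0}/2$ of $D_{x_0}f$, and then apply \cref{Lemma3Clarke} to the unit chord direction together with the fundamental theorem of calculus and Cauchy--Schwarz. No gaps.
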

\begin{proof}
Since $f$ is $\mathcal{C}^1$ there exists $r = r_{f , x_0} >0$ such that  $B_{\R^m}(x_0,r) \subset W$ and
\begin{equation}
\sup_{ z \in B_{\R^m} (x_0, r) }
\norm{D_z f - D_{x_0} f }_{\op}
<
\frac{\sigma_{ f , x_0 }}{4}.
\end{equation}
Let $g \in\mathcal{C}^1(W,\R^m)$ as in the statement. By \eqref{IpotesiInverseFunQuantit} we get 
\begin{equation}\label{InvQuantFun1}
	\norm{ D_z g - D_{x_0} f }_{\op}
	<
	\frac{\sigma_{ f , x_0 }}{ 2 } ,
\end{equation}
for all $z \in B_{\R^m} (x_0, r)$.
Now we fix $x \neq y$ in $B_{\R^m} (x_0, r)$, and we have
\begin{equation}
	g(x) - g(y)
	=
\|x - y\|
	\int_{0}^{1}
	D_{(1-t) y + t x}  g 
	\cdot 
	\frac{ x-y}{\|x - y\|}  
	\,
	dt  .
\end{equation}
We define $v_{x,y}:= \frac{x-y}{\|x - y\| }\in\R^m $, and we apply \cref{Lemma3Clarke} to $v=v_{x,y}$ and
$M_0=D_{x_0} f$, which is invertible by hypothesis. We denote by $w_{x,y}\in\R^m$ the provided unit vector, such that
\begin{equation}\label{InvQuantFun2}
	\left\langle  
	w_{x,y }  ,  D_z g 
	\cdot
	\frac{x - y}{\|x-y\|} 
	\right\rangle
	\geq 
	\frac{ \sigma_{ f , x_0 }}{2} ,
\end{equation}
 for all $z \in B_{\R^m} (x_0, r)$. We conclude the proof by the Cauchy-Schwarz inequality, which gives
\begin{equation}\label{InvQuantFun3}
\|g(x) - g(y)\|
	\geq
\|x-y\|
	\left\langle
	w_{x,y}  , 
	\int_{0}^{1}
	D_{(1-t) y + t x}  g 
	\cdot 
	\frac{ x-y}{ \|x - y\|}  
	\,
	dt
	\right\rangle
	\geq
	\frac{\sigma_{ f , x_0 }}{2} \|x-y\| ,
\end{equation}
concluding the proof.
\end{proof}

We conclude with the following inclusion, analogous to \cite[Lemma 5]{ClarkeInvFunc}, providing the ``quantitative surjectivity'' counterpart of the previous statement. We report the proof for completeness.
\begin{lemma}\label{Lemma5Clarke}
In the same setting of \cref{Lemma4Clarke}, the following inclusion holds
\begin{equation}\label{eq:ballscontained}
	g(B_{ \R^m} (x_0 , r_{f, x_0}) ) 
	\supset 
	B_{ \R^m } 
	\left( g(x_0) ,
	 \frac{ r_{f, x_0} \sigma_{f , x_0}}{8} \right ) .
	\end{equation}
\end{lemma}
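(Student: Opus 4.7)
\textbf{Proof plan for \cref{Lemma5Clarke}.} Write $r := r_{f,x_0}$ and $\sigma := \sigma_{f,x_0}$ for brevity. The plan is to prove the seemingly stronger inclusion $g(B_{\R^m}(x_0,r/2)) \supset B_{\R^m}(g(x_0),r\sigma/8)$ by a standard variational/minimization argument: fix a target $y$ in the right-hand ball, minimize the distance $\|g(\cdot)-y\|$ over the closed ball $\overline{B_{\R^m}(x_0,r/2)}$, and show (i) that the minimizer lies in the interior, so (ii) that the first-order condition forces $g(x^\ast)=y$.

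More precisely, fix $y \in \R^m$ with $\|y-g(x_0)\| < r\sigma/8$, and consider the continuous function $\varphi(x) := \|g(x)-y\|$ on the compact set $\overline{B_{\R^m}(x_0,r/2)} \subset B_{\R^m}(x_0,r) \subset W$. Let $x^\ast$ be a minimizer of $\varphi$. If $\|x^\ast-x_0\|=r/2$, then applying the lower bound \eqref{InvQuantFun4} of \cref{Lemma4Clarke} with the pair $(x^\ast,x_0)$ gives
\begin{equation}
\|g(x^\ast)-g(x_0)\| \geq \frac{\sigma}{2}\cdot\frac{r}{2} = \frac{r\sigma}{4},
\end{equation}
whence by the triangle inequality
\begin{equation}
\varphi(x^\ast) = \|g(x^\ast)-y\| \geq \frac{r\sigma}{4}-\|g(x_0)-y\| > \frac{r\sigma}{4}-\frac{r\sigma}{8} = \frac{r\sigma}{8} > \|g(x_0)-y\| = \varphi(x_0),
\end{equation}
contradicting minimality (since $x_0$ is a feasible competitor). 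Thus $x^\ast$ lies in the open ball $B_{\R^m}(x_0,r/2)$.

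Since $x^\ast$ is an interior minimizer of $\varphi^2(x)=\|g(x)-y\|^2$, the first-order condition reads $(D_{x^\ast}g)^\top(g(x^\ast)-y)=0$. Here is where I invoke the key auxiliary estimate \eqref{InvQuantFun1} established inside the proof of \cref{Lemma4Clarke}: for every $z \in B_{\R^m}(x_0,r)$ one has $\|D_z g - D_{x_0}f\|_{\op} < \sigma/2$, so that by the Weyl-type inequality \eqref{WeylIneqFinDim} we obtain $\sigma(D_z g) \geq \sigma - \sigma/2 = \sigma/2 > 0$. In particular $D_{x^\ast}g$, and hence $(D_{x^\ast}g)^\top$, is invertible, so $g(x^\ast)=y$. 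As $x^\ast \in B_{\R^m}(x_0,r/2) \subset B_{\R^m}(x_0,r)$, this yields the desired inclusion \eqref{eq:ballscontained}.

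The entire argument is essentially routine; the only point requiring care is the justification that the minimizer cannot sit on the boundary, which is what forces the factor $1/8$ (working on the half-radius ball $B_{\R^m}(x_0,r/2)$ yields the clean arithmetic $\tfrac{r\sigma}{4}-\tfrac{r\sigma}{8}=\tfrac{r\sigma}{8}$). The invertibility of $D_{x^\ast}g$, needed to convert the first-order condition into the identity $g(x^\ast)=y$, is not an extra assumption but follows for free from the Weyl inequality together with the hypothesis \eqref{IpotesiInverseFunQuantit} already used in \cref{Lemma4Clarke}.
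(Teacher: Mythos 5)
Your proof is correct and follows essentially the same route as the paper's: minimize the distance to the target $y$ over a closed ball, rule out boundary minimizers via the quantitative injectivity estimate \eqref{InvQuantFun4} of \cref{Lemma4Clarke}, and use the first-order condition together with the invertibility of $D_{x^\ast}g$ (from \eqref{InvQuantFun1} and the Weyl inequality) to conclude $g(x^\ast)=y$. The only cosmetic difference is that you minimize over the half-radius ball $B_{\R^m}(x_0,r/2)$, which gives a marginally stronger inclusion with the same arithmetic, whereas the paper minimizes over the full ball $B_{\R^m}(x_0,r)$.
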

\begin{proof}
Set $r= r_{f, x_0}$ and $\sigma = \sigma_{f, x_0}$. Let $y\in B_{ \R^m } 
\left( g(x_0) ,
 \frac{ r\sigma}{8} \right )$. Let $x$ be a minimum of $\|y - g(\cdot)\|^2$ on $B_{ \R^n} (x_0 , r)$. We claim that $ x \in \mathrm{int} B_{ \R^n} (x_0, r)$. Otherwise, using \cref{Lemma4Clarke}, we have
\begin{align}
	\frac{r \sigma}{8} 
	&
	\geq \|y - g(x_0)\| \\
	& \geq \|g(x) - g(x_0)\| - \|y - g(x)\| \\
	& \geq 
	\frac{\sigma}{2} \|x - x_0 \| - \|y - g(x)\| \\
	& \geq 
	\frac{\sigma r}{2}  - \|y - g(x_0)\| \\
	& \geq \frac{\sigma r}{2}  - \frac{\sigma r}{8}
	= \frac{3 r \sigma}{8}  ,
\end{align}
which is a contradiction. Thus $x$ yields a local minimum for the function $\|y - g(\cdot)\|^2$, and consequently 
$\nabla_x \|y - g(\cdot)\|^2 = 
- 2 D_x g \cdot (y - g(x)) = 0$. Note that by \eqref{InvQuantFun1} and the fact that the singular values are $1$-Lipschitz (see \cref{SingolarValuesLipschitz}) it follows that $\sigma(D_x g) > \tfrac{\sigma}{2}$, hence $D_x g$ is invertible. Hence $y = g(x)$.
\end{proof}

We can now state the surjectivity property of the Endpoint maps of Carnot groups.
\begin{proposition}\label{thm:polysurj}
	For every Carnot group $\gr$ there exists $d_{\gr} \in \N$ such that 
	\begin{equation}
		\End \left(\{ u \in L^2(I , \R^k) \mid  
		u \text{ is a polynomial map with components of degrees} \leq d_\gr \} \right) =  \gr  .
	\end{equation}
	\end{proposition}
\begin{proof}
The classical proof of the Rashevskii-Chow theorem provides $u_0 \in L^2(I , \R^k) $ such that $D_{u_0} \End$ is surjective and $\End( u_0 )  =  0$  (see e.g.\ \cite[Sec.\ 3.2]{ABB}). 
In particular there exist
$w_1 , \dots , w_m \in \ker (D_{u_0}\End) ^\perp$ 
such that the vectors 
$
D_{u_0} \End (w_1),
\dots,
D_{u_0} \End (w_m)
$ are linearly independent. 
Now we consider the map $f: \R^m \to \gr \simeq \R^m$  defined as
\begin{equation}
f(s_1 , \dots , s_m) 
= 
\End
\left (u_0 + \sum_{i=1}^{m}  s_i w_i \right )  .
\end{equation}
The differential $D_0 f $ is invertible by construction, in particular $f$ covers a neighbourhood of $f(0)=0$. The idea of the proof is to construct a perturbation $g: \R^m \to \gr \simeq \R^m$ of the form
\begin{equation}
	g( s_1 , \dots , s_m) 
	= 
	\End
	\left (q_0 + \sum_{i=1}^{m}  s_i p_i \right )  ,
\end{equation}
where $q_0 , p_1 , \dots , p_m \in L^2(I,\R^k)$ are suitably chosen polynomial maps, in such a way that the image of $g$ stills contains a neighbourhood of $0$. This will be done by applying \cref{Lemma5Clarke}, as we now explain.

Given 
$\varepsilon_1 , \eta_1 >0 $
by the density of polynomial maps in $L^2(I , \R^k)$ and by the continuity of $\End$ and $D\End$,
there exists  a polynomial map
$q_0 \in L^2(I , \R^k)$ such that 
\begin{equation}\label{ControllabilitaPoli2}
	\norm{\End(q_0)} < \varepsilon_1\qquad
	\text{	and }\qquad
	\norm{D_{u_0} \End - D_{q_0} \End  }_{\op} < \eta_1  .
\end{equation}
Now we prove that we can find polynomial maps  
$p_1 , \dots , p_m$ 
and a neighbourhood $W\subset\R^m$ of $0$  such that, setting $\sigma_{f,0} = \sigma(D_{x_0}f)$, it holds
\begin{equation}\label{ControllabilitaPoli1}
	\sup_{ W }
	\norm{D_s f - D_s g}_{\op}
	\leq
	\frac{\sigma_{ f , 0 }}{4}  .
\end{equation}
By the density of polynomial maps, given $\eta_2>0$ 
we consider $p_1 , \dots ,  p_m$ 
such that for any $i= 1 , \dots , m$
\begin{equation}
\norm{w_i - p_i} < \eta_2  .
\end{equation}
We get the following estimate
\begin{align}
	\norm{D_0 f - D_0 g}_{\op}
	& \leq
	\sum_{i=1}^{m}
	\|D_{u_0} \End (w_i) - D_{q_0} \End (p_i) \| \\
	& \leq
	\sum_{i=1}^{m}
	\| D_{u_0} \End (w_i) - D_{q_0} \End (w_i) \| 
	+
	\|D_{q_0} \End (w_i) - D_{q_0} \End (p_i) \| \\
	& \leq
	\norm{D_{u_0} \End - D_{q_0} \End }_{\op}
	\sum_{i=1}^{m} \norm{w_i}
	+ 
	\norm{D_{q_0} \End }_{\op}	
	\sum_{i=1}^{m} \norm{w_i - p_i} \\
	& \leq
	\eta_1
	\sum_{i=1}^{m} \norm{w_i}
	+ 
	\norm{D_{q_0} \End }_{\op}	
	m \eta_2  .
\end{align}
Given $\eta_3 >0$, by the continuity of $D f$ and $D g$ there exists a neighbourhood $W\subset \R^m$ of $0$ such that
\begin{equation}
	\sup_{ s \in W }
	\norm{ D_s f -  D_0 f }_{\op}
	+ 
	\norm{D_s g - D_0 g }_{\op}
	<
	\eta_3   .
\end{equation}
Hence, by the triangle inequality we get the following estimate 
\begin{equation}
\sup_{ s \in W }
\norm{D_s f - D_s g}_{\op}
\leq
\eta_3 
+
\eta_1
\sum_{i=1}^{m} \norm{w_i}
+ 
\norm{D_{q_0} \End }_{\op}
m \eta_2  .
\end{equation}
To obtain \eqref{ControllabilitaPoli1} it is enough to choose $\eta_1 , \eta_2 , \eta_3$ small enough.
We have thus found $g$ in such a way $\|f(0) - g(0)\| < \varepsilon_1$ and $g$ satisfies the hypothesis of \cref{Lemma5Clarke}. By the latter, we find $r_{f,0}>0$ such that
\begin{equation}
	g(B_{\R^m} (r_{f,0}) ) 
	\supset 
	B_{ \R^m } 
	\left( 
	g (0),
	 \frac{ r_{f,0}\sigma_{f , 0}}{8} \right )  .
\end{equation}
Since $\|g(0)\|<\varepsilon_1$ by construction, taking $\varepsilon_1 = \frac{r_{f,0} \sigma_{ f , 0 } }{16}$ we also get
\begin{equation}\label{SuriettivitaPolinomi1}
g ( B_{ \R^m } (r_{f , 0}) ) \supset B_{ \R^m }
	\left ( \frac{r_{f , 0} \sigma_{f, 0} }{ 16 }  \right).
\end{equation}
Let then $d_\gr$ be the maximum degree of the polynomial maps 
$q_0 ,  \dots , p_m$.
We have proved that
\begin{equation}\label{eq:surj1}
	B_{ \R^m }
\left ( \frac{r_{f , 0} \sigma_{f, 0} }{ 16 }  \right)
\subset
\End \left(\{ u \in L^2(I , \R^k) \mid u \text{ is a polynomial map of degree at most } d_\gr \}\right)  .
\end{equation}
In other words, the Endpoint map is surjective on a small ball when restricted to polynomial controls of degree $\leq d_{\gr}$. Now recall that, for Carnot groups, dilations have the following property:
\begin{equation}\label{eq:dilationsF}
\delta_{\lambda}(\End(u)) = \End(\lambda u),\qquad \forall\, \lambda >0, \quad u \in L^2(I,\R^k).
\end{equation}
From \eqref{eq:surj1} and \eqref{eq:dilationsF} it follows that $\End$, when restricted to polynomial controls of degree $\leq d_{\gr}$ is surjective on the whole $\gr \simeq \R^m$. 
\end{proof}

With the same proof we can obtain the following statement, corresponding to \cref{thm:contdensurj-intro} in the Introduction, of which \cref{thm:polysurj} is a special case.

\begin{proposition}\label{thm:contdensurj}
Let $\gr$ be a Carnot group with topological dimension $m$ and rank $k$. Let $S \subset L^2(I,\R^k)$ be a dense set. Then there exist $u_0,u_1,\dots,u_m \in S$ such that
\begin{equation}
\End(\mathrm{span}\{u_0,u_1,\dots,u_m\}) = \gr.
\end{equation}
\end{proposition}

\bibliographystyle{alphaabbr}

	\bibliography{Sard_references}

@article {LRT-selection,
    AUTHOR = {Lerario, Antonio and Rizzi, Luca and Tiberio, Daniele},
     TITLE = {Quantitative approximate definable choices},
   JOURNAL = {Math. Ann.},
  FJOURNAL = {Mathematische Annalen},
    VOLUME = {392},
      YEAR = {2025},
    NUMBER = {1},
     PAGES = {1289--1319},
      ISSN = {0025-5831},
   MRCLASS = {14P10},
  MRNUMBER = {4887789},
       DOI = {10.1007/s00208-025-03128-3},
       URL = {https://doi.org/10.1007/s00208-025-03128-3},
}

@book {Vitbook,
    AUTHOR = {Vitu\v{s}kin, A. G.},
     TITLE = {Theory of the transmission and processing of information},
      NOTE = {Translated from the Russian by Ruth Feinstein; translation
              editor A. D. Booth},
 PUBLISHER = {Pergamon Press, New York-Oxford-London-Paris},
      YEAR = {1961},
     PAGES = {xvi+206},
   MRCLASS = {41.30},
  MRNUMBER = {132342},
MRREVIEWER = {G. G. Lorentz},
}

@article {ZZ-Rigid,
    AUTHOR = {Zelenko, I. and Zhitomirski\u{\i}, M.},
     TITLE = {Rigid paths of generic {$2$}-distributions on {$3$}-manifolds},
   JOURNAL = {Duke Math. J.},
  FJOURNAL = {Duke Mathematical Journal},
    VOLUME = {79},
      YEAR = {1995},
    NUMBER = {2},
     PAGES = {281--307},
      ISSN = {0012-7094},
   MRCLASS = {58A17 (58A30 58E10)},
  MRNUMBER = {1344763},
MRREVIEWER = {Richard W. Montgomery},
       DOI = {10.1215/S0012-7094-95-07907-1},
       URL = {https://doi.org/10.1215/S0012-7094-95-07907-1},
}

@article {agrasmoothness,
    AUTHOR = {Agrachev, Andrei},
     TITLE = {Any sub-{R}iemannian metric has points of smoothness},
   JOURNAL = {Dokl. Akad. Nauk},
  FJOURNAL = {Rossi\u{\i}skaya Akademiya Nauk. Doklady Akademii Nauk},
    VOLUME = {424},
      YEAR = {2009},
    NUMBER = {3},
     PAGES = {295--298},
      ISSN = {0869-5652},
   MRCLASS = {53C17 (49J15)},
  MRNUMBER = {2513150},
MRREVIEWER = {Vladimir Krouglov},
       DOI = {10.1134/S106456240901013X},
       URL = {https://doi.org/10.1134/S106456240901013X},
}

@article {BR-Inv,
    AUTHOR = {Barilari, Davide and Rizzi, Luca},
     TITLE = {Sub-{R}iemannian interpolation inequalities},
   JOURNAL = {Invent. Math.},
  FJOURNAL = {Inventiones Mathematicae},
    VOLUME = {215},
      YEAR = {2019},
    NUMBER = {3},
     PAGES = {977--1038},
      ISSN = {0020-9910},
   MRCLASS = {53C17 (49J15 49Q20)},
  MRNUMBER = {3935035},
MRREVIEWER = {Emmanuel Tr\'{e}lat},
       DOI = {10.1007/s00222-018-0840-y},
       URL = {https://doi.org/10.1007/s00222-018-0840-y},
}

@incollection{Suss-realanal,
  author={Sussmann, Héctor J.},
  booktitle={53rd IEEE Conference on Decision and Control}, 
  title={A regularity theorem for minimizers of real-analytic subriemannian metrics}, 
  year={2014},
  volume={},
  number={},
  pages={4801-4806},
  doi={10.1109/CDC.2014.7040138},
  }

@article {OV-codimensionCarnotSard,
    AUTHOR = {Ottazzi, Alessandro and Vittone, Davide},
     TITLE = {On the codimension of the abnormal set in step two {C}arnot
              groups},
   JOURNAL = {ESAIM Control Optim. Calc. Var.},
  FJOURNAL = {ESAIM. Control, Optimisation and Calculus of Variations},
    VOLUME = {25},
      YEAR = {2019},
     PAGES = {Paper No. 18, 17},
      ISSN = {1292-8119},
   MRCLASS = {53C17 (14M17 22E25)},
  MRNUMBER = {3981990},
MRREVIEWER = {Andrea Pinamonti},
       DOI = {10.1051/cocv/2018002},
       URL = {https://doi.org/10.1051/cocv/2018002},
}

@article{BPR-Sardpreprint3,
  TITLE = {Abnormal subanalytic distributions in sub-{R}iemannian geometry},
    AUTHOR = {Belotto da Silva, Andr\'{e} and Parusi\'{n}ski, Adam and Rifford,
              Ludovic},
  URL = {https://hal.science/hal-04881557},
  YEAR = {2025},
  MONTH = Jan,
  HAL_ID = {hal-04881557},
  HAL_VERSION = {v1},
}

@article{BPR-Sardpreprint2,
      title={The Analytic Minimal Rank Sard Conjecture}, 
    AUTHOR = {Belotto da Silva, Andr\'{e} and Parusi\'{n}ski, Adam and Rifford,
              Ludovic},
      year={2025},
      eprint={2208.01392},
      archivePrefix={arXiv},
      primaryClass={math.DG},
      url={https://arxiv.org/abs/2208.01392}, 
}

@article {BPR-Sardpreprint1,
    AUTHOR = {Belotto da Silva, Andr\'{e} and Parusi\'{n}ski, Adam and Rifford,
              Ludovic},
     TITLE = {Abnormal singular foliations and the {S}ard conjecture for
              generic co-rank one distributions},
   JOURNAL = {Rev. Mat. Iberoam.},
  FJOURNAL = {Revista Matem\'{a}tica Iberoamericana},
    VOLUME = {41},
      YEAR = {2025},
    NUMBER = {5},
     PAGES = {1599--1628},
      ISSN = {0213-2230},
   MRCLASS = {53C17 (58A30)},
  MRNUMBER = {4938554},
       DOI = {10.4171/rmi/1567},
       URL = {https://doi.org/10.4171/rmi/1567},
}

@article {BFPR-StrongSardInventiones,
    AUTHOR = {Belotto da Silva, Andr{é} and Figalli, Alessio and Parusi\'{n}ski, Adam and
              Rifford, Ludovic},
     TITLE = {Strong {S}ard conjecture and regularity of singular minimizing
              geodesics for analytic sub-{R}iemannian structures in
              dimension 3},
   JOURNAL = {Invent. Math.},
  FJOURNAL = {Inventiones Mathematicae},
    VOLUME = {229},
      YEAR = {2022},
    NUMBER = {1},
     PAGES = {395--448},
      ISSN = {0020-9910},
   MRCLASS = {53C17 (53C22)},
  MRNUMBER = {4438357},
MRREVIEWER = {Luca Rizzi},
       DOI = {10.1007/s00222-022-01111-2},
       URL = {https://doi.org/10.1007/s00222-022-01111-2},
}

@article {BR-DukeMartinet,
    AUTHOR = {Belotto da Silva, Andr\'{e} and Rifford, Ludovic},
     TITLE = {The {S}ard conjecture on {M}artinet surfaces},
   JOURNAL = {Duke Math. J.},
  FJOURNAL = {Duke Mathematical Journal},
    VOLUME = {167},
      YEAR = {2018},
    NUMBER = {8},
     PAGES = {1433--1471},
      ISSN = {0012-7094},
   MRCLASS = {53A99 (32S45 34H05)},
  MRNUMBER = {3807314},
MRREVIEWER = {Davide Vittone},
       DOI = {10.1215/00127094-2017-0058},
       URL = {https://doi.org/10.1215/00127094-2017-0058},
}

@article {CJT-generic,
    AUTHOR = {Chitour, Y. and Jean, F. and Tr\'{e}lat, E.},
     TITLE = {Genericity results for singular curves},
   JOURNAL = {J. Differential Geom.},
  FJOURNAL = {Journal of Differential Geometry},
    VOLUME = {73},
      YEAR = {2006},
    NUMBER = {1},
     PAGES = {45--73},
      ISSN = {0022-040X},
   MRCLASS = {58A30 (49K15 53C17)},
  MRNUMBER = {2217519},
MRREVIEWER = {Yuri L. Sachkov},
       URL = {http://projecteuclid.org/euclid.jdg/1146680512},
}

@article {AG-subanalitic,
    AUTHOR = {Agrachev, Andrei and Gauthier, Jean-Paul},
     TITLE = {On the subanalyticity of {C}arnot-{C}aratheodory distances},
   JOURNAL = {Ann. Inst. H. Poincar\'{e} C Anal. Non Lin\'{e}aire},
  FJOURNAL = {Annales de l'Institut Henri Poincar\'{e} C. Analyse Non Lin\'{e}aire},
    VOLUME = {18},
      YEAR = {2001},
    NUMBER = {3},
     PAGES = {359--382},
      ISSN = {0294-1449},
   MRCLASS = {93B27 (53C17 93C10)},
  MRNUMBER = {1831660},
MRREVIEWER = {Hector J. Sussmann},
       DOI = {10.1016/S0294-1449(00)00064-0},
       URL = {https://doi.org/10.1016/S0294-1449(00)00064-0},
}

@article {R-subdiff,
    AUTHOR = {Rifford, Ludovic},
     TITLE = {Subdifferentials and minimizing {S}ard conjecture in
              sub-{R}iemannian geometry},
   JOURNAL = {J. \'{E}c. polytech. Math.},
  FJOURNAL = {Journal de l'\'{E}cole polytechnique. Math\'{e}matiques},
    VOLUME = {10},
      YEAR = {2023},
     PAGES = {1195--1244},
      ISSN = {2429-7100},
   MRCLASS = {53C17 (49J52 49Q15)},
  MRNUMBER = {4645934},
}

@article {AS-Morse,
    AUTHOR = {Agrachev, Andrei and Sarychev, A. V.},
     TITLE = {Abnormal sub-{R}iemannian geodesics: {M}orse index and
              rigidity},
   JOURNAL = {Ann. Inst. H. Poincar\'{e} C Anal. Non Lin\'{e}aire},
  FJOURNAL = {Annales de l'Institut Henri Poincar\'{e} C. Analyse Non Lin\'{e}aire},
    VOLUME = {13},
      YEAR = {1996},
    NUMBER = {6},
     PAGES = {635--690},
      ISSN = {0294-1449},
   MRCLASS = {49K15 (58A30 58E10)},
  MRNUMBER = {1420493},
MRREVIEWER = {Heinz Sch\"{a}ttler},
       DOI = {10.1016/S0294-1449(16)30118-4},
       URL = {https://doi.org/10.1016/S0294-1449(16)30118-4},
}

@article {AAPL-OT,
    AUTHOR = {Agrachev, Andrei and Lee, Paul},
     TITLE = {Optimal transportation under nonholonomic constraints},
   JOURNAL = {Trans. Amer. Math. Soc.},
  FJOURNAL = {Transactions of the American Mathematical Society},
    VOLUME = {361},
      YEAR = {2009},
    NUMBER = {11},
     PAGES = {6019--6047},
      ISSN = {0002-9947},
   MRCLASS = {49J15 (37J60 49J30 53C17)},
  MRNUMBER = {2529923},
MRREVIEWER = {Filippo Santambrogio},
       DOI = {10.1090/S0002-9947-09-04813-2},
       URL = {https://doi.org/10.1090/S0002-9947-09-04813-2},
}

@article {BNV-Filiform,
    AUTHOR = {Boarotto, Francesco and Nalon, Luca and Vittone, Davide},
     TITLE = {The {S}ard problem in step 2 and in filiform {C}arnot groups},
   JOURNAL = {ESAIM Control Optim. Calc. Var.},
  FJOURNAL = {ESAIM. Control, Optimisation and Calculus of Variations},
    VOLUME = {28},
      YEAR = {2022},
     PAGES = {Paper No. 75, 20},
      ISSN = {1292-8119},
   MRCLASS = {53C17 (22E25 58K05)},
  MRNUMBER = {4524416},
MRREVIEWER = {Scott Robert Zimmerman},
       DOI = {10.1051/cocv/2022074},
       URL = {https://doi.org/10.1051/cocv/2022074},
}

@article {BV-Dynamical,
    AUTHOR = {Boarotto, Francesco and Vittone, Davide},
     TITLE = {A dynamical approach to the {S}ard problem in {C}arnot groups},
   JOURNAL = {J. Differential Equations},
  FJOURNAL = {Journal of Differential Equations},
    VOLUME = {269},
      YEAR = {2020},
    NUMBER = {6},
     PAGES = {4998--5033},
      ISSN = {0022-0396},
   MRCLASS = {53C17 (37N35 58K05)},
  MRNUMBER = {4104464},
MRREVIEWER = {Emmanuel Tr\'{e}lat},
       DOI = {10.1016/j.jde.2020.03.050},
       URL = {https://doi.org/10.1016/j.jde.2020.03.050},
}

@article {RT-MorseSard,
    AUTHOR = {Rifford, L. and Tr\'{e}lat, E.},
     TITLE = {Morse-{S}ard type results in sub-{R}iemannian geometry},
   JOURNAL = {Math. Ann.},
  FJOURNAL = {Mathematische Annalen},
    VOLUME = {332},
      YEAR = {2005},
    NUMBER = {1},
     PAGES = {145--159},
      ISSN = {0025-5831},
   MRCLASS = {53C17 (49J52)},
  MRNUMBER = {2139255},
       DOI = {10.1007/s00208-004-0622-2},
       URL = {https://doi.org/10.1007/s00208-004-0622-2},
}

@article {Bates,
    AUTHOR = {Bates, S. M.},
     TITLE = {On smooth rank-{$1$} mappings of {B}anach spaces onto the
              plane},
   JOURNAL = {J. Differential Geom.},
  FJOURNAL = {Journal of Differential Geometry},
    VOLUME = {37},
      YEAR = {1993},
    NUMBER = {3},
     PAGES = {729--733},
      ISSN = {0022-040X},
   MRCLASS = {58C25 (58C27)},
  MRNUMBER = {1217168},
MRREVIEWER = {P. T. Church},
       URL = {http://projecteuclid.org/euclid.jdg/1214453907},
}

@incollection {A-openproblems,
    AUTHOR = {Agrachev, Andrei},
     TITLE = {Some open problems},
 BOOKTITLE = {Geometric control theory and sub-{R}iemannian geometry},
    SERIES = {Springer INdAM Ser.},
    VOLUME = {5},
     PAGES = {1--13},
 PUBLISHER = {Springer, Cham},
      YEAR = {2014},
   MRCLASS = {53-02 (49-02 53C17 93B27)},
  MRNUMBER = {3205092},
       DOI = {10.1007/978-3-319-02132-4\_1},
       URL = {https://doi.org/10.1007/978-3-319-02132-4_1},
}

@book {Montgomerybook,
    AUTHOR = {Montgomery, Richard},
     TITLE = {A tour of subriemannian geometries, their geodesics and
              applications},
    SERIES = {Mathematical Surveys and Monographs},
    VOLUME = {91},
 PUBLISHER = {American Mathematical Society, Providence, RI},
      YEAR = {2002},
     PAGES = {xx+259},
      ISBN = {0-8218-1391-9},
   MRCLASS = {53C17 (37J99 53C60 58E10 70G45 70H05)},
  MRNUMBER = {1867362},
MRREVIEWER = {Andrey V. Sarychev},
       DOI = {10.1090/surv/091},
       URL = {https://doi.org/10.1090/surv/091},
}

@article {AGL-pathspace,
    AUTHOR = {Agrachev, Andrei and Gentile, Alessandro and Lerario,
              Antonio},
     TITLE = {Geodesics and horizontal-path spaces in {C}arnot groups},
   JOURNAL = {Geom. Topol.},
  FJOURNAL = {Geometry \& Topology},
    VOLUME = {19},
      YEAR = {2015},
    NUMBER = {3},
     PAGES = {1569--1630},
      ISSN = {1465-3060},
   MRCLASS = {53C17 (37J60 58E10)},
  MRNUMBER = {3352244},
MRREVIEWER = {Mauricio Godoy},
       DOI = {10.2140/gt.2015.19.1569},
       URL = {https://doi.org/10.2140/gt.2015.19.1569},
}

@article {LDMOPV-Sardprop,
    AUTHOR = {Le Donne, Enrico and Montgomery, Richard and Ottazzi,
              Alessandro and Pansu, Pierre and Vittone, Davide},
     TITLE = {Sard property for the endpoint map on some {C}arnot groups},
   JOURNAL = {Ann. Inst. H. Poincar\'{e} C Anal. Non Lin\'{e}aire},
  FJOURNAL = {Annales de l'Institut Henri Poincar\'{e} C. Analyse Non Lin\'{e}aire},
    VOLUME = {33},
      YEAR = {2016},
    NUMBER = {6},
     PAGES = {1639--1666},
      ISSN = {0294-1449},
   MRCLASS = {53C17 (14M17 22E25)},
  MRNUMBER = {3569245},
MRREVIEWER = {Jeanne Nielsen Clelland},
       DOI = {10.1016/j.anihpc.2015.07.004},
       URL = {https://doi.org/10.1016/j.anihpc.2015.07.004},
}

@incollection {AAA-Rendiconti,
    AUTHOR = {Agrachev, A.},
     TITLE = {Compactness for sub-{R}iemannian length-minimizers and
              subanalyticity},
      NOTE = {Control theory and its applications (Grado, 1998)},
   JOURNAL = {Rend. Sem. Mat. Univ. Politec. Torino},
  FJOURNAL = {Universit\`a e Politecnico di Torino. Seminario Matematico.
              Rendiconti},
    VOLUME = {56},
      YEAR = {1998},
    NUMBER = {4},
     PAGES = {1--12 (2001)},
      ISSN = {0373-1243},
   MRCLASS = {93B29 (49J15 53C17 58E10)},
  MRNUMBER = {1845741},
}

@article {ClarkeInvFunc,
    AUTHOR = {Clarke, F. H.},
     TITLE = {On the inverse function theorem},
   JOURNAL = {Pacific J. Math.},
  FJOURNAL = {Pacific Journal of Mathematics},
    VOLUME = {64},
      YEAR = {1976},
    NUMBER = {1},
     PAGES = {97--102},
      ISSN = {0030-8730},
   MRCLASS = {26A57},
  MRNUMBER = {425047},
MRREVIEWER = {B. Rodr\'{\i}guez-Salinas},
       URL = {http://projecteuclid.org/euclid.pjm/1102867214},
}

@book {Vit1,
    AUTHOR = {Vitu\v{s}kin, A. G.},
     TITLE = {O mnogomernyh variaciyah},
 PUBLISHER = {Gosudarstv. Izdat. Tehn.-Teor. Lit., Moscow},
      YEAR = {1955},
     PAGES = {220},
   MRCLASS = {27.2X},
  MRNUMBER = {0075267},
MRREVIEWER = {L. C. Young},
}

@book {BCR,
    AUTHOR = {Bochnak, J. and Coste, M. and Roy, M.-F.},
     TITLE = {G\'eom\'etrie alg\'ebrique r\'eelle (Second edition in english: Real Algebraic Geometry)},
    SERIES = {Ergebnisse der Mathematik und ihrer Grenzgebiete [Results in Mathematics and Related Areas]},
    VOLUME = {12 (36)},
 PUBLISHER = {Springer-Verlag},
   ADDRESS = {Berlin},
      YEAR = {1987 (1998)},
     PAGES = {x+373},
      ISBN = {3-540-16951-2},
   MRCLASS = {14G30 (11E25 12D15 32C05 58A07)},
  MRNUMBER = {949442 (90b:14030)},
MRREVIEWER = {Jes{\'u}s M. Ruiz},
}

@article {FedererTransAmerMathSoc,
    AUTHOR = {Federer, Herbert},
     TITLE = {The {$(\varphi,k)$} rectifiable subsets of {$n$}-space},
   JOURNAL = {Trans. Amer. Math. Soc.},
  FJOURNAL = {Transactions of the American Mathematical Society},
    VOLUME = {62},
      YEAR = {1947},
     PAGES = {114--192},
      ISSN = {0002-9947},
   MRCLASS = {27.2X},
  MRNUMBER = {22594},
MRREVIEWER = {L. Cesari},
       DOI = {10.2307/1990632},
       URL = {https://doi.org/10.2307/1990632},
}

@incollection {Bellaiche,
    AUTHOR = {Bella\"{\i}che, Andr\'{e}},
     TITLE = {The tangent space in sub-{R}iemannian geometry},
 BOOKTITLE = {Sub-{R}iemannian geometry},
    SERIES = {Progr. Math.},
    VOLUME = {144},
     PAGES = {1--78},
 PUBLISHER = {Birkh\"{a}user, Basel},
      YEAR = {1996},
      ISBN = {3-7643-5476-3},
   MRCLASS = {53C99 (57R27)},
  MRNUMBER = {1421822},
MRREVIEWER = {Claudio\ Gorodski},
       DOI = {10.1007/978-3-0348-9210-0\_1},
       URL = {https://doi.org/10.1007/978-3-0348-9210-0_1},
}

@book {ABB,
    AUTHOR = {Agrachev, Andrei and Barilari, Davide and Boscain, Ugo},
     TITLE = {A comprehensive introduction to sub-{R}iemannian geometry},
    SERIES = {Cambridge Studies in Advanced Mathematics},
    VOLUME = {181},
      NOTE = {From the Hamiltonian viewpoint,
              With an appendix by Igor Zelenko},
 PUBLISHER = {Cambridge University Press, Cambridge},
      YEAR = {2020},
     PAGES = {xviii+745},
      ISBN = {978-1-108-47635-5},
   MRCLASS = {53C17},
  MRNUMBER = {3971262},
MRREVIEWER = {Luca\ Rizzi},
}

@book {TaoRMT2012,
    AUTHOR = {Tao, Terence},
     TITLE = {Topics in random matrix theory},
    SERIES = {Graduate Studies in Mathematics},
    VOLUME = {132},
 PUBLISHER = {American Mathematical Society, Providence, RI},
      YEAR = {2012},
     PAGES = {x+282},
      ISBN = {978-0-8218-7430-1},
   MRCLASS = {60B20 (15B52)},
  MRNUMBER = {2906465},
MRREVIEWER = {Steven\ Joel\ Miller},
       DOI = {10.1090/gsm/132},
       URL = {https://doi.org/10.1090/gsm/132},
}

@article {Lev,
    AUTHOR = {Lokutsievskiy, Lev and Zelikin, Mikhail},
     TITLE = {Derivatives of sub-{R}iemannian geodesics are
              {$L_{p}$}-{H}\"{o}lder continuous},
   JOURNAL = {ESAIM Control Optim. Calc. Var.},
  FJOURNAL = {ESAIM. Control, Optimisation and Calculus of Variations},
    VOLUME = {29},
      YEAR = {2023},
     PAGES = {Paper No. 70, 30},
      ISSN = {1292-8119,1262-3377},
   MRCLASS = {53C17 (49J15 49N60)},
  MRNUMBER = {4629512},
       DOI = {10.1051/cocv/2023055},
       URL = {https://doi.org/10.1051/cocv/2023055},
}

@article {realpoly2,
    AUTHOR = {Kuhn, Harald},
     TITLE = {Interpolation vorgeschriebener {E}xtremwerte},
   JOURNAL = {J. Reine Angew. Math.},
  FJOURNAL = {Journal f\"{u}r die Reine und Angewandte Mathematik. [Crelle's
              Journal]},
    VOLUME = {238},
      YEAR = {1969},
     PAGES = {24--31},
      ISSN = {0075-4102,1435-5345},
   MRCLASS = {41.10},
  MRNUMBER = {247332},
MRREVIEWER = {J.\ Hersch},
       DOI = {10.1515/crll.1969.238.24},
       URL = {https://doi.org/10.1515/crll.1969.238.24},
}

@article {realpoly,
    AUTHOR = {Kristiansen, G. K.},
     TITLE = {Characterization of polynomials by means of their stationary
              values},
   JOURNAL = {Arch. Math. (Basel)},
  FJOURNAL = {Archiv der Mathematik},
    VOLUME = {43},
      YEAR = {1984},
    NUMBER = {1},
     PAGES = {44--48},
      ISSN = {0003-889X,1420-8938},
   MRCLASS = {12D05 (26C10 30C10 42A05)},
  MRNUMBER = {758339},
MRREVIEWER = {Mahfooz\ Alam},
       DOI = {10.1007/BF01193610},
       URL = {https://doi.org/10.1007/BF01193610},
}

@book {LangRealandFunct,
    AUTHOR = {Lang, Serge},
     TITLE = {Real and functional analysis},
    SERIES = {Graduate Texts in Mathematics},
    VOLUME = {142},
   EDITION = {Third},
 PUBLISHER = {Springer-Verlag, New York},
      YEAR = {1993},
     PAGES = {xiv+580},
      ISBN = {0-387-94001-4},
   MRCLASS = {00A05 (26-01 28-01 46-01 47-01 58-01)},
  MRNUMBER = {1216137},
       DOI = {10.1007/978-1-4612-0897-6},
       URL = {https://doi.org/10.1007/978-1-4612-0897-6},
}

@book {BasuPoRoyBook,
    AUTHOR = {Basu, Saugata and Pollack, Richard and Roy,
              Marie-Fran\c{c}oise},
     TITLE = {Algorithms in real algebraic geometry},
    SERIES = {Algorithms and Computation in Mathematics},
    VOLUME = {10},
   EDITION = {Second},
 PUBLISHER = {Springer-Verlag, Berlin},
      YEAR = {2006},
     PAGES = {x+662},
      ISBN = {978-3-540-33098-1; 3-540-33098-4},
   MRCLASS = {14P10 (03C10 52C45 68Q25 68W30)},
  MRNUMBER = {2248869},
       URL = {https://perso.univ-rennes1.fr/marie-francoise.roy/bpr-ed2-posted3.html},
}

@article {Yomdinnearly,
    AUTHOR = {Yomdin, Y.},
     TITLE = {The geometry of critical and near-critical values of
              differentiable mappings},
   JOURNAL = {Math. Ann.},
  FJOURNAL = {Mathematische Annalen},
    VOLUME = {264},
      YEAR = {1983},
    NUMBER = {4},
     PAGES = {495--515},
      ISSN = {0025-5831,1432-1807},
   MRCLASS = {58C25 (41A99)},
  MRNUMBER = {716263},
MRREVIEWER = {S.\ \L ojasiewicz},
       DOI = {10.1007/BF01456957},
       URL = {https://doi.org/10.1007/BF01456957},
}

@article {Kupka,
    AUTHOR = {Kupka, Ivan},
     TITLE = {Counterexample to the {M}orse-{S}ard theorem in the case of
              infinite-dimensional manifolds},
   JOURNAL = {Proc. Amer. Math. Soc.},
  FJOURNAL = {Proceedings of the American Mathematical Society},
    VOLUME = {16},
      YEAR = {1965},
     PAGES = {954--957},
      ISSN = {0002-9939,1088-6826},
   MRCLASS = {57.55 (57.50)},
  MRNUMBER = {182024},
MRREVIEWER = {S.\ Rolewicz},
       DOI = {10.2307/2035591},
       URL = {https://doi.org/10.2307/2035591},
}

@article {Morse,
    AUTHOR = {Morse, Anthony P.},
     TITLE = {The behavior of a function on its critical set},
   JOURNAL = {Ann. of Math. (2)},
  FJOURNAL = {Annals of Mathematics. Second Series},
    VOLUME = {40},
      YEAR = {1939},
    NUMBER = {1},
     PAGES = {62--70},
      ISSN = {0003-486X,1939-8980},
   MRCLASS = {99-04},
  MRNUMBER = {1503449},
       DOI = {10.2307/1968544},
       URL = {https://doi.org/10.2307/1968544},
}

@article {Sard,
    AUTHOR = {Sard, Arthur},
     TITLE = {The measure of the critical values of differentiable maps},
   JOURNAL = {Bull. Amer. Math. Soc.},
  FJOURNAL = {Bulletin of the American Mathematical Society},
    VOLUME = {48},
      YEAR = {1942},
     PAGES = {883--890},
      ISSN = {0002-9904},
   MRCLASS = {27.2X},
  MRNUMBER = {7523},
MRREVIEWER = {H.\ Blumberg},
       DOI = {10.1090/S0002-9904-1942-07811-6},
       URL = {https://doi.org/10.1090/S0002-9904-1942-07811-6},
}

@article {smale,
    AUTHOR = {Smale, S.},
     TITLE = {An infinite dimensional version of {S}ard's theorem},
   JOURNAL = {Amer. J. Math.},
  FJOURNAL = {American Journal of Mathematics},
    VOLUME = {87},
      YEAR = {1965},
     PAGES = {861--866},
      ISSN = {0002-9327,1080-6377},
   MRCLASS = {57.55 (57.50)},
  MRNUMBER = {185604},
MRREVIEWER = {Richard\ Beals},
       DOI = {10.2307/2373250},
       URL = {https://doi.org/10.2307/2373250},
}

@article {Kellogg1928,
    AUTHOR = {Kellogg, O. D.},
     TITLE = {On bounded polynomials in several variables},
   JOURNAL = {Math. Z.},
  FJOURNAL = {Mathematische Zeitschrift},
    VOLUME = {27},
      YEAR = {1928},
    NUMBER = {1},
     PAGES = {55--64},
      ISSN = {0025-5874},
   MRCLASS = {DML},
  MRNUMBER = {1544896},
       DOI = {10.1007/BF01171085},
       URL = {https://doi.org/10.1007/BF01171085},
}

@incollection {YomdinApproxCompl,
    AUTHOR = {Yomdin, Y.},
     TITLE = {Approximational complexity of functions},
 BOOKTITLE = {Geometric aspects of functional analysis (1986/87)},
    SERIES = {Lecture Notes in Math.},
    VOLUME = {1317},
     PAGES = {21--43},
 PUBLISHER = {Springer, Berlin},
      YEAR = {1988},
      ISBN = {3-540-19353-7},
   MRCLASS = {58C99 (26B99 68Q30)},
  MRNUMBER = {950974},
MRREVIEWER = {Pierre\ D.\ Milman},
       DOI = {10.1007/BFb0081734},
       URL = {https://doi.org/10.1007/BFb0081734},
}

@article {BoarLer17,
    AUTHOR = {Boarotto, Francesco and Lerario, Antonio},
     TITLE = {Homotopy properties of horizontal path spaces and a theorem of
              {S}erre in subriemannian geometry},
   JOURNAL = {Comm. Anal. Geom.},
  FJOURNAL = {Communications in Analysis and Geometry},
    VOLUME = {25},
      YEAR = {2017},
    NUMBER = {2},
     PAGES = {269--301},
      ISSN = {1019-8385,1944-9992},
   MRCLASS = {53C17 (93B27)},
  MRNUMBER = {3690242},
MRREVIEWER = {Francesco\ Rossi},
       DOI = {10.4310/CAG.2017.v25.n2.a1},
       URL = {https://doi.org/10.4310/CAG.2017.v25.n2.a1},
}

@book {Pinkus,
    AUTHOR = {Pinkus, Allan},
     TITLE = {{$n$}-widths in approximation theory},
    SERIES = {Ergebnisse der Mathematik und ihrer Grenzgebiete (3) [Results
              in Mathematics and Related Areas (3)]},
    VOLUME = {7},
 PUBLISHER = {Springer-Verlag, Berlin},
      YEAR = {1985},
     PAGES = {x+291},
      ISBN = {3-540-13638-X},
   MRCLASS = {41-02 (41A46 46E35)},
  MRNUMBER = {774404},
MRREVIEWER = {Klaus H\"{o}llig},
       DOI = {10.1007/978-3-642-69894-1},
       URL = {https://doi.org/10.1007/978-3-642-69894-1},
}

@book {ComteYomdin,
    AUTHOR = {Yomdin, Yosef and Comte, Georges},
     TITLE = {Tame geometry with application in smooth analysis},
    SERIES = {Lecture Notes in Mathematics},
    VOLUME = {1834},
 PUBLISHER = {Springer-Verlag, Berlin},
      YEAR = {2004},
     PAGES = {viii+186},
      ISBN = {3-540-20612-4},
   MRCLASS = {14P10 (26B15 32S15 37C99)},
  MRNUMBER = {2041428},
MRREVIEWER = {Lev Birbrair},
       DOI = {10.1007/b94624},
       URL = {https://doi.org/10.1007/b94624},
}
\end{document}